\newtheorem{thm}{Theorem}[section]
\newtheorem{conj}[thm]{Conjecture}
\newtheorem{cor}[thm]{Corollary}
\newtheorem{lem}[thm]{Lemma}
\newtheorem{obs}[thm]{Observation}
\newcounter{claim}[thm]
\newenvironment{property}[1][]%
{\vspace{2ex} \noindent \textbf{Property~($\star$)} #1}{\vspace{2ex}}
\newcommand{\MP}{Property~{\rm($\star$)} }
\newcommand{\MPdot}{Property~{\rm($\star$)}}
\newenvironment{propertye}[1][]%
{\vspace{2ex} \noindent \textbf{Property~($\circ$)} #1}{\vspace{2ex}}
\newcommand{\MPe}{Property~{\rm($\circ$)} }
\newcommand{\MPedot}{Property~{\rm($\circ$)}}
\crefname{thm}{Theorem}{Theorems}
\crefname{lem}{Lemma}{Lemmas}
\crefname{conj}{Conjecture}{Conjectures}
\newenvironment{claim}[1][]%
{ \refstepcounter{claim}\vspace{1ex} {\it Claim \it \arabic{claim}. {#1}{}}\it}{\vspace{2ex}}
\newenvironment{proofclaim}[1][]%
{\noindent {}{#1}}{This proves Claim~\arabic{claim}.\vspace{2ex}}
\crefname{claim}{Claim}{Claim}
\theoremstyle{definition}
\renewcommand{\phi}{\varphi}
\DeclareMathOperator{\codeg}{codeg}
\DeclareMathOperator{\mcodeg}{\Delta_2}
\DeclareMathOperator{\acodeg}{\overline{\codeg}}
\DeclareMathOperator{\macodeg}{\overline{\Delta_2}}
\newcommand{\say}[1]{``#1''} 
\title{Vu's conjecture holds for claw-free graphs}
\author[1]{Linda Cook} 
\author[2]{Ross J. Kang} 
\author[3]{Eileen Robinson} 
\author[2]{Gabri\"elle Zwaneveld}
\date{5 November 2025}
\affil[1]{\small Mathematical Institute, Utrecht University, Netherlands.}
\affil[2]{\small Korteweg–de Vries Institute for Mathematics, University of Amsterdam, Netherlands.}
\affil[3]{\small Universit\'e libre de Bruxelles, Belgium.} 
\begin{document}
 \maketitle

\begin{abstract}
Given a graph $G$, let $\Delta_2(G)$ denote the maximum number of neighbors any two distinct vertices of $G$ have in common.
Vu (2002) proposed that, provided $\Delta_2(G)$ is not too small as a proportion of the maximum degree $\Delta(G)$ of $G$, the chromatic number of $G$ should never be too much larger than $\Delta_2(G)$.
We make a first approach towards Vu's conjecture from a structural graph theoretic point of view.
We prove that, in the case where $G$ is claw-free, indeed the chromatic number of $G$ is at most $\Delta_2(G)+3$.
This is tight, as our bound is met with equality for the line graph of the Petersen graph. 
Moreover, we can prove this in terms of the more specific parameter that bounds the maximum number of neighbors any two endpoints of some edge of $G$ have in common.
Our result may be viewed as a generalization of the classic bound of Vizing (1964) for edge-coloring.

  \noindent\textit{Keywords: claw-free graphs, chromatic number, codegree, Vu's conjecture, edge-colouring}

  \noindent\textit{MSC 2020: 05C15, 05C75, 05C35} 
\end{abstract}

\section{Introduction}\label{sec:intro}

The {\em codegree}  $\codeg(v,w)$ of two distinct vertices $u$ and $v$ in a graph $G$ is the number of common neighbors of $u$ and $v$ in $G$. The {\em maximum codegree} $\mcodeg(G)$ of $G$ is defined as $\mcodeg(G)= \max_{u,v \in V(G)} \codeg(u,v)$. We denote the maximum degree of $G$ by $\Delta(G)$.

Our starting point is the following conjecture for the chromatic number $\chi(G)$ of $G$ in terms of its maximum codegree $\mcodeg(G)$. This is essentially a conjecture of Vu~\cite{Vu02} from 2002; he actually suggested it for the stronger {\em list} chromatic number $\chi_\ell(G)$ of $G$.

\begin{conj}\label{conj:Vu}
Fix $\varepsilon_1,\varepsilon_2 > 0$.
If $\mcodeg(G) \ge \varepsilon_1 \Delta(G)$, then $\chi(G) \le \mcodeg(G)+\varepsilon_2\mcodeg(G)$, provided $\Delta(G)$ is sufficiently large.
\end{conj}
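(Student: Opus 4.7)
The plan is to attack Vu's conjecture via the semi-random (Rödl nibble) method, in the spirit of the sparse-neighborhood colouring arguments of Johansson, Molloy--Reed, and Alon--Krivelevich--Sudakov. The starting observation is that the hypothesis $\mcodeg(G) = D$ forces every neighbourhood $G[N(v)]$ to have maximum degree at most $D$, since for $u \in N(v)$ the neighbours of $u$ inside $N(v)$ are precisely the common neighbours of $u$ and $v$, whose number is $\codeg(u,v) \le D$. Hence whenever $D \le \Delta(G)$ the neighbourhoods are sparse, and this sparsity is the resource to be exploited.

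I would initialise a list assignment $L(v)$ of size $(1+\varepsilon_2)D$ for every vertex and run an iterative random colouring. In each round I pick, for every uncoloured vertex, a tentative colour uniformly at random from its current list; I keep the colour on $v$ if no neighbour of $v$ receives the same tentative colour, and otherwise I uncolour $v$; then I remove from each list every colour now permanently used on a neighbour. The central invariant to maintain is that the list size $|L(v)|$ and the residual degree $d(v)$ both shrink geometrically, while the ratio $|L(v)|/d(v)$ stays bounded below by $1 + \Omega(\varepsilon_2)$. Concentration of $|L(v)|$ and of the colour-class degrees $|\{u \in N(v) : c \in L(u)\}|$ would come from Talagrand's inequality, and the simultaneous preservation of the invariant at every vertex from the Lovász Local Lemma, whose dependency graph is controlled precisely by the codegree bound $D$. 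After $O(\log \Delta)$ rounds, lists dwarf residual degrees and a deterministic Haxell-type finish completes the colouring.

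The main obstacle is the quantitative gap between what a standard nibble delivers and what the conjecture demands: the classical sparse-neighbourhood arguments produce bounds of order $\Delta/\log(\Delta/D)$ rather than $(1+o(1))D$, so merely using that neighbourhoods have small maximum degree is insufficient. One must exploit the global codegree constraint, not only its local consequence. A natural refinement is to recurse, since after one successful nibble the residual graph enjoys a sharper codegree-to-degree ratio than the original, and iterating should drive the bound toward $(1+o(1))D$; making this recursion uniform in $\varepsilon_1$ (which governs the base case) requires a delicate trade-off between the number of rounds and the colour loss per round. A second obstacle is the intermediate regime $\varepsilon_1\Delta \le D \ll \Delta$, where neither the dense-neighbourhood techniques (Reed's $\omega,\Delta,\chi$ toolkit, clique covers as used here for the claw-free case) nor the pure sparse-neighbourhood techniques apply cleanly; a hybrid decomposition of $G$ into locally dense parts, handled by clique-cover / structural methods, and locally sparse parts, handled by the nibble, is likely where the heaviest lifting will occur.
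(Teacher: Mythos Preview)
The statement you are attempting to prove is \emph{Conjecture}~\ref{conj:Vu}, not a theorem: the paper does not prove it and explicitly says it remains wide open. What the paper establishes is the special case of claw-free graphs (Theorem~\ref{thm:maincf}), and it does so by purely structural means --- the Chudnovsky--Seymour / King--Reed decomposition of claw-free graphs --- with no probabilistic component whatsoever. So there is no ``paper's own proof'' of this statement to compare against.

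As for the content of your sketch: you correctly observe that $\mcodeg(G)\le D$ forces every neighbourhood to have maximum degree at most $D$, and you correctly identify that this is precisely the hypothesis under which the sparse-neighbourhood nibble machinery operates. But you then also correctly identify the fatal gap: that machinery yields bounds of order $\Delta/\log(\Delta/D)$, not $(1+o(1))D$. Your proposed fix --- iterate the nibble because the residual graph has a better codegree-to-degree ratio --- is not substantiated and, as far as is currently known, does not close this gap; indeed, the papers you implicitly invoke (and the more recent \cite{HJK22,KKO24,BDMW25+} cited here) represent the state of the art and fall well short of the conjecture. The ``hybrid decomposition'' you gesture at in the final paragraph is a reasonable heuristic but is not an argument. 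In short, your proposal is an honest outline of why the conjecture is plausible and of the known partial approaches, together with an accurate diagnosis of where they stall; it is not a proof, and the obstacles you name are exactly the reasons Conjecture~\ref{conj:Vu} is still open.
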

\noindent
The lower bound hypothesis for $\mcodeg(G)$ could potentially be weakened, but not to any bound that is $o(\Delta(G)/\log\Delta(G))$, due to random regular graphs (see~\cite{CJMS23+}).
We have no reason yet to rule out an absolute constant, possibly even $3$, in place of the error term $\varepsilon_2\mcodeg(G)$ in the bound.
To that end, below in \cref{conj:Vusharper} we propose a stronger form of this conjecture.
Despite its essential nature and its interesting implications for other problems such as the List Coloring Conjecture (see the discussion in~\cite{Vu02}), there was no movement on Vu's conjecture until recent partial progress employing probabilistic methodology~\cite{HJK22,KKO24,BDMW25+}. %
In general, Conjecture~\ref{conj:Vu} remains wide open.

Distinct to earlier efforts, we embark on a {\em structural} graph theoretic investigation of \cref{conj:Vu}. 
To take our viewpoint, consider the case where $G$ is the line graph $L(G_0)$ of some (simple) graph $G_0$.
By a straightforward case analysis, it is not difficult to see that the maximum codegree of $G$ satisfies $\mcodeg(G) \in \{\Delta(G_0)-2,\Delta(G_0)-1\}$ provided $\Delta(G_0) \ge 3$. (Which of the two depends on whether a vertex in $G_0$ of maximum degree is in a triangle.)
Recall by a classic theorem on edge-coloring due to Vizing~\cite{vizing1964estimate} that for any (simple) graph $G_0$, the line graph of $G_0$ satisfies $\chi(L(G_0)) \leq \Delta(G_0)+1$. Hence, if $G$ is a line graph, then $\chi(G) \leq \mcodeg(G)+3$ as  $\mcodeg(G) \geq \Delta(G_0)-2$.  
Thus Conjecture~\ref{conj:Vu} holds in the restricted case of $G$ being the line graph of some (simple) graph.
Note that this bound is tight for the line graph of the Petersen graph, which has maximum codegree $1$ and chromatic number $4$; see \cref{fig:petersenline}. (In fact, it is tight for the line graph of any triangle-free graph of class II.)
Note also that a result of Kahn~\cite{Kah96b}, a {\em tour de force} of the Nibble Method, implies that the list chromatic number of any line graph $G$ satisfies $\chi_\ell(G) = (1+o(1)) \mcodeg(G)$ as $\mcodeg(G)\to\infty$. 
Vu noted this as inspiration for the stronger list coloring version of Conjecture~\ref{conj:Vu}.

A {\em claw} is a copy of the star $K_{1,3}$ as an induced subgraph and we say that $G$ is {\em claw-free} if it contains no claw.
Claw-freeness is equivalent to the condition that no neighborhood contain an independent set of size three.
Every line graph is claw-free, a nontrivially strict inclusion. Before setting some wider context, here is our main result.

\begin{restatable}{thm}{maincf}
\label{thm:maincf}
If $G$ is claw-free, then $\chi(G) \le \mcodeg(G)+3$. 
\end{restatable}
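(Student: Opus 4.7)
My plan would be a Vizing-style minimum-counterexample argument. Set $k := \mcodeg(G) + 3$, and suppose toward a contradiction that $G$ is a vertex-critical claw-free graph with $\chi(G) \ge k+1$ of minimum order; then every proper induced subgraph of $G$ is $k$-colorable, and criticality gives $\delta(G) \ge k$. Note that $G$ remains claw-free after deleting any vertex, and the parameter $\mcodeg$ is monotone, so this setup is compatible with induction.

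Next I would exploit claw-freeness locally. For any $v$, the graph $G[N(v)]$ has $\alpha(G[N(v)]) \le 2$, so in any proper coloring every color class meets $N(v)$ in at most two vertices. Fixing a proper $k$-coloring $c$ of $G - v$, all $k$ colors must appear in $N(v)$ (otherwise $c$ would extend to $v$), so $k \le \deg(v) \le 2k$. I would then partition $[k]$ into \emph{singular} colors (used once on $N(v)$) and \emph{double} colors (used twice), noting that there are exactly $2k - \deg(v)$ singular colors. The goal is to modify $c$, possibly propagating changes beyond $N(v)$, so that some color becomes missing on $N(v)$ and can be assigned to $v$.

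The heart of the plan is a Vizing-fan plus Kempe-chain argument. I would start with a singular color $\alpha$ on a vertex $u \in N(v)$ and attempt to recolor $u$ with some color $\beta$ missing on $N(u)$. If $\beta$ is also missing on $N(v) \setminus \{u\}$, we finish; otherwise, each such $\beta$ is doubled on $N(v)$ and both of its occurrences lie in $N(u)$, for if some $w \in N(v)$ with $c(w) = \beta$ were not adjacent to $u$ then $\beta$ would be available at $u$ after no modification. Iterating produces a fan of neighbors of $v$ along which colors can be shifted. The codegree hypothesis drives the pigeonholing here: for any two non-adjacent $u', w' \in N(v)$ one has $|N(u') \cap N(w')| \le \mcodeg(G) = k-3$, which guarantees that at each step of the fan a substantial number of colors are missing somewhere. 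Closing the fan with a bichromatic Kempe swap along an $\alpha\beta$-path then yields a proper $k$-coloring of $G$, contradicting $\chi(G) \ge k+1$.

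The main obstacle, I expect, is carrying out the fan/Kempe step cleanly in the general claw-free setting rather than in a line graph, where Vizing's original proof proceeds along the incidences of an underlying multigraph. Here the ``fan'' has to be defined combinatorially via the adjacency structure of $G[N(v)]$, using that $\overline{G[N(v)]}$ is triangle-free (so $G[N(v)]$ is nearly a union of two cliques) and that common neighborhoods have size at most $k-3$. I anticipate that the cleanest route is an auxiliary adjacency lemma, in the spirit of Vizing's adjacency lemma but tailored to claw-free graphs, asserting roughly that at a vertex $v$ of a vertex-critical claw-free graph with chromatic number $k+1$, many neighbors of $v$ have a large set of colors missing on their neighborhoods. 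A careful case split on whether the fan ``collides'' inside a singular versus a double color class, combined with this adjacency lemma, should deliver the contradiction and hence the bound $\chi(G) \le \mcodeg(G) + 3$.
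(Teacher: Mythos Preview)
Your proposal is a plan rather than a proof, and the gap sits exactly where you yourself locate it: the fan/Kempe step. In Vizing's theorem the fan works because in a line graph $L(H)$ the neighbourhood of a vertex splits canonically into two cliques indexed by the endpoints of the underlying edge, giving the fan a well-defined ``other side'' to rotate onto and making the Kempe chain a path in $H$. For a general claw-free $G$ you only have $\alpha(G[N(v)]) \le 2$; the graph $G[N(v)]$ need not be anywhere close to a union of two cliques (that condition is quasi-line, and even there the situation is subtler than for line graphs). Your fan paragraph also contains an internal inconsistency: you pick $\beta$ \emph{missing} on $N(u)$ and then assert that ``both of its occurrences lie in $N(u)$''. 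If one unwinds the logic, whenever some $\beta \ne \alpha$ is missing on $N_G(u)\setminus\{v\}$ you can simply recolour $u \mapsto \beta$, $v \mapsto \alpha$ and finish; the substantive case is when \emph{no} such $\beta$ exists at $u$, and that case is not addressed. You also invoke a singular colour without arguing that one exists---if $\deg(v)=2k$ every colour is doubled, so you would first need to show a minimum counterexample cannot be $2k$-regular. The hoped-for ``auxiliary adjacency lemma'' is where the entire proof would have to live, and no candidate statement or argument is offered for it.

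The paper takes a completely different, structural route. It invokes the Chudnovsky--Seymour / King--Reed structure theorem: after ruling out clique cutsets, homogeneous pairs of cliques, and strict thickenings, a minimal counterexample must be a line graph of a multigraph, a circular interval graph, an antiprismatic graph, a three-cliqued graph, a graph admitting one of several specified $2$-joins, or an icosahedral graph, and each class is dispatched by a bespoke argument (Vizing for line graphs; a greedy sweep for interval graphs; a matching analysis in the triangle-free complement for the $\alpha\le 2$ and antiprismatic cases; ad hoc colour extensions across the $2$-joins; a direct check for the icosahedron). Several of these cases---especially $\alpha(G)\le 2$ and the three-cliqued case---are proved by passing to the complement and counting unmatched vertices against the anti-codegree, which looks nothing like a fan rotation and suggests that a uniform Vizing-style argument, if one exists at all, would require genuinely new ideas beyond what you have sketched.
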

\noindent
This confirms Conjecture~\ref{conj:Vu} in the restricted case of $G$ being a claw-free graph.
To the best of our knowledge, this is the first specific confirmation of Conjecture~\ref{conj:Vu} within a non-trivial hereditary graph class. 
We remark that any $\chi$-binding function for a given graph class of at most $(1+o(1))\omega$ also implies the conjecture for that class; we discuss more context for this in the conclusion.
One may interpret Theorem~\ref{thm:maincf} as nearly a direct generalization of the bound in Vizing's theorem.
(Analogously, Conjecture~\ref{conj:Vu} can be seen as, prospectively and approximately, an even broader extension of Vizing's.)
Just above, we noted how this bound is met for the line graph of, among others, the Petersen graph.

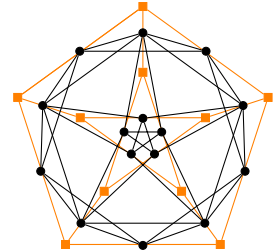
\begin{wrapfigure}{R}{0.25\textwidth}
    \centering
    \begin{tikzpicture}[scale=0.35, rotate=90]
    \tikzstyle{vertex}=[circle, draw, fill=black,
                        inner sep=0pt, minimum width=3pt]
    \tikzstyle{prev}=[rectangle, draw=orange, fill=orange,
                        inner sep=1.5pt, minimum width=3pt]

    \node[prev] (a1) at (0:2.5) {};
    \node[prev] (a2) at (72:2.5) {};
    \node[prev] (a3) at (144:2.5) {};
    \node[prev] (a4) at (216:2.5) {};
    \node[prev] (a5) at (288:2.5) {};
    \draw[color=orange] (a1)--(a3)--(a5)--(a2)--(a4)--(a1);

    \node[prev] (b1) at (0:5) {};
    \node[prev] (b2) at (72:5) {};
    \node[prev] (b3) at (144:5) {};
    \node[prev] (b4) at (216:5) {};
    \node[prev] (b5) at (288:5) {};
    \draw[color=orange](b1)--(b2)--(b3)--(b4)--(b5)--(b1);
    \draw[color=orange](a2)--(b2)--(b1)--(a1);
    \draw[color=orange](a3)--(b3)--(b4)--(a4);
    \draw[color=orange](a5)--(b5);
    
    \node[vertex] (w1) at (0:0.75) {};
    \node[vertex] (w2) at (72:0.75) {};
    \node[vertex] (w3) at (144:0.75) {};
    \node[vertex] (w4) at (216:0.75) {};
    \node[vertex] (w5) at (288:0.75) {};
    \draw (w1)--(w3)--(w5)--(w2)--(w4)--(w1);
    
    \node[vertex] (vv1) at (36:4.08) {};
    \node[vertex] (vv2) at (108:4.08) {};
    \node[vertex] (vv3) at (180:4.08) {};
    \node[vertex] (vv4) at (252:4.08) {};
    \node[vertex] (vv5) at (324:4.08) {};

    \node[vertex] (v1) at (0:4) {};
    \node[vertex] (v2) at (72:4) {};
    \node[vertex] (v3) at (144:4) {};
    \node[vertex] (v4) at (216:4) {};
    \node[vertex] (v5) at (288:4) {};
    \draw (v1)--(vv1)--(v2)--(vv2)--(v3)--(vv3)--(v4)--(vv4)--(v5)--(vv5)--(v1);
    \draw (v1)--(v2)--(v3)--(v4)--(v5)--(v1);
     \draw (vv1)--(vv2)--(vv3)--(vv4)--(vv5)--(vv1);
    \draw (w2)--(v1)--(w5)--(v4)--(w3)--(v2)--(w1)--(v5)--(w4)--(v3)--(w2);
\end{tikzpicture}
    \caption{The Petersen graph (in orange) and its line graph (in black).}
    \label{fig:petersenline}
\end{wrapfigure}

Already several efforts in structural graph theory have been devoted to extending edge-coloring results to coloring results for claw-free graphs~\cite{ChudFrad08,chudnovsky2007coloring,claws-vi-coloring,EGM14,GrMa04,king-reed-asymptotics}. These usually gave bounds as a function of the clique number. When $G$ is the line graph of $G_0$, the clique number $\omega(G)$ of $G$ coincides with the maximum degree $\Delta(G_0)$ of $G_0$, provided $\Delta(G_0)\ge 3$; and so Vizing's theorem translates into a bound of the form $\chi(G) \le \omega(G)+1$ in this case. Gy\'arf\'as~\cite{Gya87} noted how $\chi(G) \le \omega(G)^2$ for any claw-free $G$. However, there are claw-free graphs $G$ of arbitrarily large clique number $\omega(G)$ for which $\chi(G) = \Omega(\omega(G)^2/\log \omega(G))$: one can take $G$ as the complement of any construction certifying the lower bound $R(3,k) =\Omega(k^2/\log k)$ as $k\to\infty$ for the off-diagonal Ramsey numbers $R(3,k)$, e.g.~\cite{Kim95}. Thus the direct extension to claw-free graphs of the bound in Vizing's theorem fails if cast in terms of the clique number, in contrast with our \cref{thm:maincf}.

We briefly discuss some broader rationale for focusing on claw-free graphs in particular.
Almost half a century ago independently Minty~\cite{Min80} and Sbihi~\cite{Sbi80} demonstrated a polynomial-time algorithm to compute the independence number of a given claw-free graph, extending the analogous result for the matching number due to Edmonds~\cite{Edm65}.
Over the decades, there have been further important attempts spanning combinatorial optimization and graph theory to extend results from the much more structured class of line graphs to the wider class of claw-free graphs.
Perhaps most notably are the efforts to better understand the so-called stable set polytope of claw-free graphs, originating in problems of Ben Rebea (see~\cite{ChSb88,Ori03}) and of Gr\"otschel, Lov\'asz and Schrijver~\cite{GLS88}.
This motivated efforts towards the better structural understanding of claw-free graphs~\cite{ChudSeym05,EOSV08}.
One may view our work within this general line and, indeed, it relies heavily on claw-free structure theory.

We say that $G$ is a {\em quasi-line} graph if every neighborhood of a vertex in $G$ admits a partition into two parts so that each part induces a clique of $G$.
Every line graph is quasi-line and every quasi-line graph is claw-free, both non-trivially strict inclusions.
This is an important intermediate class in claw-free structure.
As one of the milestones towards Theorem~\ref{thm:maincf}, we highlight the following result, which we prove in Section~\ref{sec:quasiline}. 
\begin{restatable}{thm}{mainquasiline} \label{thm:mainquasiline} If $G$ is a quasi-line graph, then $\chi(G) \le \mcodeg(G)+3$.
\end{restatable}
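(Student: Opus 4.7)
The plan is to prove \cref{thm:mainquasiline} by induction on $|V(G)|$, building on the Chudnovsky--Seymour structural characterization of quasi-line graphs. That characterization says every connected quasi-line graph is either a fuzzy circular interval graph, or can be expressed as a composition of fuzzy linear interval strips glued at cliques corresponding to the vertices of an underlying multigraph $H$. I would treat these two cases separately, aiming to match $\chi(G)$ against $\mcodeg(G)+3$ in each.

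The composition case is essentially a thickened version of the line-graph construction, and the natural approach is to reduce to Vizing's theorem for multigraphs, which bounds $\chi'(H) \leq \Delta(H) + \mu(H)$, where $\mu(H)$ is the maximum edge multiplicity. Each color class of a proper edge-coloring of $H$ corresponds to a matching of strips in $G$, and the strips can be colored independently within their color class. The key bookkeeping is to (i) extend the edge-coloring of $H$ to proper colorings of the strip interiors, and (ii) verify that both $\Delta(H)-2$ and $\mu(H)-1$ are lower bounds on $\mcodeg(G)$, arising from cliques in the vertex expansions of $H$. Combining these should give $\chi(G) \leq \mcodeg(G)+3$, directly generalizing the argument sketched in the introduction for line graphs.

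For the fuzzy circular interval case, I would argue from the circular arc representation. Cutting the circle at a suitable point yields an interval graph, which is perfect and hence colorable with $\omega(G)$ colors; gluing the cut back together requires at most a small constant number of additional colors through a color-shift argument along the circular structure. The small exceptional cases where $\mcodeg(G)$ is much smaller than $\omega(G)$---notably odd cycles and their low powers---can be inspected by hand, since for each of them $\chi(G) \leq \mcodeg(G)+3$ is easily verified directly.

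The main obstacle will be the composition case, specifically the delicate accounting of how codegree distributes among the vertex-cliques of $H$ and the interiors of the strips. A strip may itself contribute a clique of size larger than the degree of its endpoint in $H$, so pooling codegree contributions between strips and vertex-cliques requires bookkeeping that does not appear in the pure line graph setting. Moreover, homogeneous pair reductions permitted by the Chudnovsky--Seymour decomposition must be verified to preserve $\mcodeg(G)$ up to the required slack. The tight example of $L(\text{Petersen})$, with $\mcodeg = 1$ and $\chi = 4$, signals that the additive constant $+3$ must already arise in the composition case, so the accounting must be tight enough to match this sharp constant.
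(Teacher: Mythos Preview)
Your composition case has a genuine gap. You propose to bound $\chi(G)$ by Vizing's $\Delta(H)+\mu(H)$, then verify $\Delta(H)-2\le\mcodeg(G)$ and $\mu(H)-1\le\mcodeg(G)$. But these two inequalities only yield $\Delta(H)+\mu(H)\le 2\mcodeg(G)+3$, not $\mcodeg(G)+3$; there is no reason the maximum-multiplicity pair and the maximum-degree vertex of $H$ coincide, so you cannot combine them into a single codegree witness. More fundamentally, an edge-colouring of $H$ assigns one colour per strip, while each strip is a fuzzy linear interval graph that may require up to $\omega(S_e)$ colours internally; the sentence ``the strips can be colored independently within their color class'' does not explain how one colour per strip is turned into a proper vertex-colouring of $G$. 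The literature approaches that do work here (e.g.\ for the $\tfrac32\omega$ bound) need a much more delicate inductive extension inside each strip, and it is not clear that such an extension can be made tight enough to hit the $+3$ constant.

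The paper sidesteps all of this. It first \emph{defuzzifies}: a $\mathcal{G}_{ql}$-critical counterexample to $\chi\le\mcodeg+3$ can have no clique cutset, no dominated vertex, and (via Chudnovsky--Fradkin) no homogeneous pair of cliques. This permits the use of a cleaner structure theorem (Chudnovsky--King--Plumettaz--Seymour) whose outcomes are: line graph of a multigraph, circular interval graph, or a graph admitting a \emph{canonical interval $2$-join}. Line graphs of multigraphs are handled by Vizing on the simple underlying graph plus a greedy extension to parallel edges (which are twins); circular interval graphs by a direct greedy list-colouring along the cycle; and the canonical interval $2$-join case by colouring $G_1$ inductively and greedily extending along the linear interval strip $G_2$. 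No global edge-colouring of an underlying multigraph $H$ is ever invoked. Your circular-interval sketch is plausible but also different in flavour: the paper does not cut to an interval graph but instead colours greedily around the cycle starting from a carefully chosen longest arc, bounding the number of pre-coloured neighbours of each vertex by $\mcodeg(G)+2$ directly.
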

\noindent
Note for comparison that $\chi(G) \le 1.5\omega(G)$ for quasi-line $G$ and there are examples attaining the bound~\cite{chudnovsky2007coloring}. So the direct extension even to quasi-line graphs of the bound in Vizing's theorem fails in terms of the clique number.

Given a graph $G$, let us denote by $\Delta_e(G)$ the maximum size of a collection of triangles in $G$ that all contain some common edge. In other words, $\Delta_e(G)$ is the maximum codegree taken among the pairs of endpoints of some edge in $G$. Note that $\Delta_e(G) \le \mcodeg(G)$ always. With some adaptations in the proof, \cref{thm:maincf} can be strengthened to hold with $\Delta_e$ instead of $\mcodeg$.

\begin{restatable}{thm}{maincfedge} \label{thm:maincfedge}
    If $G$ is claw-free, then $\chi(G) \le \Delta_e(G)+3$.
\end{restatable}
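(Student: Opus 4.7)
The plan is to revisit the proof of \cref{thm:maincf} and audit each invocation of a codegree bound, ensuring that it can be witnessed by a pair of \emph{adjacent} vertices. Since the statement differs from \cref{thm:maincf} only by replacing $\mcodeg(G)$ with $\Delta_e(G)$, one might hope to deduce \cref{thm:maincfedge} as a black-box corollary; however $\mcodeg(G) - \Delta_e(G)$ can in principle be arbitrarily large, so we cannot simply do this. The overall structural scaffolding — reducing via claw-free structure theory to the quasi-line case (\cref{thm:mainquasiline}), then treating a small number of atypical structures separately — should remain intact; only the local coloring arguments need adjustment.

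The central technical observation is that, in a claw-free graph $G$, a large codegree on a non-adjacent pair always forces a nearly-as-large codegree on some edge. Concretely, for non-adjacent $u,v$ with $S := N(u)\cap N(v)$, claw-freeness forces $\alpha(S) \le 2$ (since $S \subseteq N(u)$), so either $|S|$ is small or $S$ contains many edges. In the quasi-line case, $S$ inherits from the two-clique partition of $N(u)$ a partition into two cliques $S_A, S_B$; choosing an edge $w_1w_2$ inside the larger clique $S_A$ yields
\[
\codeg(w_1,w_2) \;\ge\; |S_A \setminus \{w_1,w_2\}| + |\{u,v\}| \;=\; |S_A| \;\ge\; |S|/2,
\]
since $u$, $v$, and the remaining vertices of $S_A$ are all common neighbors of $w_1$ and $w_2$. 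A more refined version of this trade, exploiting the fuzzy circular interval structure of quasi-line graphs and, in the non-quasi-line case, the Chudnovsky--Seymour decomposition, should tighten the loss so that the resulting bound fits inside the additive constant $3$.

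The first concrete step is to strengthen \cref{thm:mainquasiline} to its $\Delta_e$-analog by retracing its proof: the coloring argument there is of Vizing type, so the codegrees invoked arise around the specific edge whose colour one is attempting to extend, and are thus supported on edges by construction. The second step is to check that the structural reduction from claw-free to quasi-line preserves $\Delta_e$-bounds, i.e.\ that when a claw-free graph is decomposed into quasi-line pieces, $\Delta_e$ on those pieces still controls $\chi$ of the whole. I expect the main obstacle to lie in this second step: claw-free graphs that are not quasi-line have neighbourhoods $N(v)$ that need not be two-clique-coverable, and the relationship between $\mcodeg$ and $\Delta_e$ in such neighbourhoods is more delicate. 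Handling this will require either an appeal to the finer claw-free structure theorems or ad hoc arguments for the atypical cases that the proof of \cref{thm:maincf} already isolates, carried out in each case carefully enough to avoid any constant loss beyond what is already absorbed in the \say{$+3$}.
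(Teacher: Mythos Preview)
Your high-level plan in the first paragraph --- re-run the structural proof of \cref{thm:maincf} and check that every codegree invoked is witnessed on an edge --- is exactly what the paper does in \cref{sec:adaptation}. The trade-off observation in your second paragraph, however, is a wrong turn. The inequality you derive is correct but only yields $\Delta_e(G)\gtrsim \mcodeg(G)/2$; no \say{refined version} of such a multiplicative comparison can produce an additive-constant conclusion. Fortunately it is also unnecessary. In almost every branch of the proof of \cref{thm:maincf} the pair whose codegree is used already sits inside a single clique: in the same interval of a circular or linear interval graph (\cref{lem:circular_interval_graphs,lem:canonical interval 2 join}), in the same $K_i$ of a three-cliqued cover (\cref{lem: updatedstatement}), in the same clique of an antihat, strange, or gear ribbon (\cref{antihat critical,Lem: strange 2 join,lemma:gear2join}); likewise, the dominating pairs and twins arising in the defuzzification lemmas are adjacent. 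All of these branches therefore go through verbatim with $\Delta_e$ in place of $\mcodeg$, with no trade-off needed.

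The one genuine obstacle --- which you correctly locate among the non-quasi-line cases but do not pinpoint --- is the $\alpha(G)\le 2$ base of the antiprismatic argument (\cref{ssec:cf_alpha2}). Working in the triangle-free complement $H=\overline{G}$, that argument relies on \cref{lem: bounding anti codeg}: $\macodeg(H)\ge |V(H)|/2-2$. The edge-codegree analogue is false; take $H=C_5$, which is self-complementary with $\overline{\Delta_e}(C_5)=0$ but $|V(C_5)|/2-2=1/2$. The paper proves instead the weaker bound $\overline{\Delta_e}(H)\ge |V(H)|/2-3$, via a separate argument forcing a $C_4$ when $H$ has girth at least $5$. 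This costs one in the additive constant, and that unit is precisely the slack between the original $\chi\le\mcodeg+2$ for antiprismatic graphs (\cref{cor: antiprismatic}) and the target $\chi\le\Delta_e+3$, so the overall bound survives.
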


\paragraph{Outline of the paper.} In \cref{sec:prelim}, we state notation and some preliminary results. Then in \cref{sec:defuzz}, we show that if our main results holds for all `non-fuzzy' claw-free graphs then it also holds for all `fuzzy' claw-free graphs, implying that in the rest of the paper we only need to deal with the `non-fuzzy' graphs. \Cref{sec:quasiline} is dedicated to show the main result on quasi-line graphs, namely \cref{thm:mainquasiline}. Then \cref{sec:cf} extends this result to claw-free graphs, completing the proof of \cref{thm:maincf}. In \cref{sec:adaptation}, we specify the adaptations required in our proof to strengthen the result to \cref{thm:maincfedge}. We end with some thoughts for future research.

\section{Notation and preliminaries}\label{sec:prelim}

Although it is mostly standard, for the convenience of the reader, we collect much of our notation here as well as some basic preparatory observations. We also state the structure theorem we work from and outline our strategy in the proof of \cref{thm:maincf}.

All graphs $G$ are finite and undirected, with $V(G)$ denoting the set of vertices and $E(G)$ denoting the set of edges of $G$. 
A graph $H$ is a \emph{subgraph} of $G$ if $V(H)\subseteq V(G)$ and $E(H)\subseteq E(G)$. If $H$ is a subgraph of $G$ and $e\in E(H)$ if and only if $e \in E(G)$, then $H$ is an \emph{induced subgraph} of $G$. All (sub)graphs in this paper (including special ones such as cliques and independent sets) are assumed to be non-empty and thus contain at least one vertex. 
We call a graph class $\mathcal{G}$ \emph{hereditary} if it is closed under taking induced subgraphs. 
%
The \emph{complement} of a simple graph $G$ is the graph $\overline{G}$ with $V(\overline{G})=V(G)$ and $e\in E(\overline{G})$ if and only if $e\notin E(G)$. Taking the complement is a self-inverse property, so $\overline{\overline{G}}=G$.
For a given graph $H$, a graph $G$ is said to be \emph{$H$-free} if it does not contain $H$ as an induced subgraph. For $\cal H$ a set of graphs, $G$ is \emph{$\cal H$-free} if $G$ is $H$-free for all $H\in \cal H$.

For $Y \subseteq V (G)$ and $x \in V (G) \setminus Y$, we say that $x$ is \textit{complete} (resp.~\textit{anticomplete}) to $Y$ if $x$ is adjacent (resp.~non-adjacent) to every member of $Y$. 
If $x$ is neither complete nor anticomplete to $Y$ we say $x$ is \emph{mixed} on $Y$.
For disjoint $X, Y \subseteq V (G)$, we say that $X$ is \textit{complete} (resp.~\textit{anticomplete}) to $Y$ if every vertex of $X$ is adjacent (resp.~non-adjacent) to every vertex of $Y$.
The \emph{open} (resp.~\emph{closed}) \emph{neighborhood} of a vertex $v\in V(G)$ is the set $N(v)=\{u\in V(G) \, \vert \, uv\in E(G), u\neq v\}$ (resp.~$N[v]=N(v)\cup\{v\}$). The \emph{degree} of a vertex $v\in V(G)$, denoted $\deg(v)$, is the size of its open neighborhood. The \emph{maximum degree} of a graph $G$ is $\Delta(G)=\max_{v\in V(G)}\deg(v)$. 
%
The \emph{codegree} of two distinct vertices $u,v\in V(G)$, denoted $\codeg^G(u,v)$ or $\codeg(u,v)$ when the graph used is clear, is the size of their common neighborhood: $\codeg(u,v)=|N(u)\cap N(v)|$. The \emph{maximum codegree} of a graph $G$ is $\mcodeg (G)= \max_{u,v \in V(G)} \codeg (u,v)$. 
Clearly, $\mcodeg$ is non-increasing under the operation of taking subgraphs.
The \emph{anti-codegree of $v,w\in V(G)$}, denoted $\acodeg^G(v,w)$ or $\acodeg(v,w)$ when the graph used is clear, is the number of vertices $x \in V(G)$ such that $x \neq v$ and $x \neq w$, satisfying that $x$ is not adjacent to $v$ and also not to $w$, i.e.~$\acodeg(v,w)=|V(G)\setminus (N(v)\cup N(w)\cup\{v,w\})|$. Hence, by definition of the complement graph, we obtain $\acodeg^{G}(u,v) = \codeg^{\overline{G}}(u,v)$. 
The \emph{maximum anti-codegree} of a graph $G$ is $\macodeg(G) = \max_{v,w \in V(G)} \acodeg(v,w)$. It satisfies $\macodeg(G) = \mcodeg(\overline{G})$.

A \emph{clique} is a set of pairwise adjacent vertices. A clique of size $3$ is called a triangle. A clique of $\ell$ elements is denoted by $K_\ell$.
The \emph{clique number} of a graph $G$, denoted $\omega(G)$, is the size of a largest clique in $G$. If vertices $u$ and $v$ belong to a clique of size $\omega(G)$, then all other vertices in this clique are common neighbors of $u$ and $v$. Hence, we have the following inequality between the maximum codegree and the clique number:
\[\mcodeg (G) \geq \codeg(u,v) \geq \omega(G)-2.\] 
An \emph{independent set} is a set of vertices such that no pair of vertices is adjacent. An independent set of size $3$ is called a triad. The \emph{independence number} of a graph $G$, denoted by $\alpha(G)$, is the size of a largest independent set in $G$.
A \emph{matching} in $G$ is a set of pairwise disjoint edges. The {\em matching number} of a graph $G$, denoted $m(G)$, is the size of a maximum matching in $G$.
%

A \emph{(vertex-)coloring} of $G$ is a function $f: V(G) \rightarrow \mathbb{Z}^+$ where the image elements are referred to as \emph{colors}. 
A coloring is called \emph{proper} if any two adjacent vertices are assigned distinct colors; we often omit this modifier when the context is clear.
The \emph{chromatic number} of a graph $G$, denoted $\chi(G)$, is the least number of colors in a proper coloring of $G$. Since the chromatic number of a graph is the maximum of the chromatic number of each of its components, we need only consider connected graphs. 
For any vertex $v$, let $L(v)$ be a list of colors for $v$. If for any choice of lists of size $k$, there exists a proper coloring using for each $v$ a color from $L(v)$, then $G$ is $k$-list-colorable. The list chromatic number is defined as $\chi_\ell(G)= \min\{k \, | \, G \text{ is $k$-list-colorable}\}$. 
If we require that every list contain the same set of colors, this is the usual definition of coloring. Hence, $\chi(G)\leq \chi_\ell(G)$.


In a proper coloring of $G$, all vertices of a clique must have a different color as they are all adjacent to each other, which implies $\chi(G) \geq \omega(G)$.
Moreover, every color class forms an independent set, which implies $\chi(G) \geq \frac{|V(G)|}{\alpha(G)}$ for all graphs $G$.
These independent sets are cliques in the complement graph $\overline{G}$. Hence, the chromatic number of $G$ is the same as the \emph{clique covering number} of $\overline{G}$. The clique cover number of a graph $H$ is denoted by $\overline{\chi}(H)$, and so $\chi(G)=\overline{\chi}(\overline{G})$. 


The \emph{line graph} of a graph $G$, denoted $L(G)$, is the graph where the vertices correspond to the edges of $G$ and two vertices of $L(G)$ are adjacent if and only if they correspond to edges sharing a vertex in $G$. 
The class of line graphs can be characterized as the $\mathcal{F}$-free graphs where $\mathcal{F}$ is a family of 9 graphs among which stands the claw, $K_{1,3}$.
A graph is a \emph{quasi-line graph} if the closed neighborhood of any vertex is the union of two cliques. (Note that there can be edges between the two cliques.) Therefore, the class of quasi-line graphs clearly do not admit a claw as induced subset and form a proper subset of the claw-free graphs. Moreover, this class is a proper superset of the line graphs.

\paragraph{Proof strategy.}
Recall that throughout the proof we may restrict attention to connected graphs.
To construct colorings of claw-free graphs, we use as a road map a version of the structure theorem for claw-free graphs as presented by King and Reed~\cite[Thm~6.3]{king-reed-claw-free}, which was derived from the prior work of Chudnovsky and Seymour~\cite{claws-v-globalstructure}.
We defer the more specialized definitions of the structures and operations mentioned here until the places where we treat them.
\begin{thm}[\cite{king-reed-claw-free} Theorem 6.3]
\label{thm:structure-king-reed}
    If $G$ is a claw-free graph that has no homogeneous pair of cliques\footnote{ This theorem is a slight weakening of the statement in King and Reed~\cite{king-reed-claw-free}, which states the hypothesis for \say{skeletal} claw-free graphs instead of claw-free graphs with no homogeneous pair of cliques. However, every graph not containing a homogeneous pair of cliques is by definition skeletal. 
    }, then one of the following holds.
    \begin{enumerate}
        \item $G$ admits a clique cutset. 
        \item $G$ is a quasi-line graph. 
        \item $G$ is an antiprismatic thickening. 
        \item $G$ is a three-cliqued graph.
        \item $\chi(\overline{G}) \geq 4$ and $G$ admits
        \begin{itemize}
            \item a canonical interval $2$-join,
            \item an antihat $2$-join, 
             \item a strange $2$-join,
              \item a pseudo-line $2$-join, or
            \item a gear $2$-join.
        \end{itemize}
        \item $\chi(\overline{G}) \geq 4$  and $G$ is an icosahedral thickening.
    \end{enumerate}
\end{thm}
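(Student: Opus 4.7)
My plan is to derive this statement not from scratch, but as a specialization of the full structure theorem for claw-free graphs due to Chudnovsky and Seymour~\cite{claws-v-globalstructure}. That theorem asserts that every connected claw-free graph either belongs to one of a small list of basic classes (quasi-line graphs, antiprismatic graphs, three-cliqued graphs of various subtypes, and icosahedral thickenings), or else admits one of a short list of structural decompositions: clique cutsets, homogeneous pairs (equivalently $W$-joins), or $2$-joins of several kinds. The task is then to reorganize the output of that theorem under the hypothesis that $G$ contains no homogeneous pair of cliques, and to bundle the assorted $2$-join types into the five named flavors in case 5.

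I would proceed as follows. First, apply the Chudnovsky-Seymour theorem to $G$. If $G$ admits a clique cutset, we are in case 1 and done. The skeletal hypothesis then lets us immediately eliminate the homogeneous-pair/$W$-join output, since by definition such a decomposition produces a homogeneous pair of cliques in $G$. Next, dispose of the basic classes: if $G$ is basic and quasi-line we are in case 2, if $G$ is antiprismatic we are in case 3, if $G$ is three-cliqued we are in case 4, and if $G$ is an icosahedral thickening we aim for case 6. This leaves the nontrivial $2$-join outputs from the Chudnovsky-Seymour theorem, which must be grouped into the five types in case 5.

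The remaining work is then twofold: to verify that the $2$-join types produced by Chudnovsky-Seymour in the claw-free setting are exactly (or can be refined into) the canonical interval, antihat, strange, pseudo-line, and gear $2$-joins, and to argue that the additional hypothesis $\chi(\overline{G})\ge 4$ in cases 5 and 6 is free. The latter is the easier half: if $\chi(\overline{G})\le 3$, then $V(G)$ is the union of three cliques, and one can directly exhibit $G$ as a three-cliqued graph, so the graph already falls into case 4 and no further structural output is needed. A consequence is that cases 5 and 6 only need to be invoked when $\chi(\overline{G})\ge 4$, which justifies attaching that hypothesis. For the former, each of the five $2$-joins corresponds to a specific family of basic trigraphs in the Chudnovsky-Seymour classification (long circular interval graphs, antihat trigraphs, etc.), and the identification is made by tracing through which basic trigraph each $2$-join came from.

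The principal obstacle is the bookkeeping in the $2$-join classification: the Chudnovsky-Seymour theorem produces several varieties of $2$-join, often parametrized by both the host basic trigraph and the position of the join within it, and one must check that every such output either realizes one of the five named types or collapses to a case already handled (typically case 4 when $\chi(\overline{G})\le 3$, or case 1 when a redundant vertex on the join side creates a clique cutset). A secondary obstacle is checking that the refinement from \emph{skeletal} in King-Reed's original statement to the stronger hypothesis \emph{no homogeneous pair of cliques} used here does not discard any cases; this is the footnoted remark and follows immediately since by definition a graph without a homogeneous pair of cliques is skeletal.
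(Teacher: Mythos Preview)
The paper does not contain its own proof of this statement. Theorem~\ref{thm:structure-king-reed} is quoted verbatim as Theorem~6.3 of King and Reed~\cite{king-reed-claw-free} (itself derived from the Chudnovsky--Seymour structure theorem~\cite{claws-v-globalstructure}) and is used throughout the paper as a black-box structural tool; the paper's contribution is to rule out a $\mathcal{G}_{cf}$-critical graph for Property~($\star$) in each of the six outcomes, not to establish the structure theorem itself. So there is no in-paper proof to compare your proposal against.

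That said, your sketch is a reasonable high-level account of how King and Reed obtain their Theorem~6.3 from~\cite{claws-v-globalstructure}: start from the Chudnovsky--Seymour decomposition, discard the $W$-join/homogeneous-pair outcome via the skeletal hypothesis, sort the basic classes into outcomes 2--4 and 6, observe that $\chi(\overline{G})\le 3$ already lands in outcome 4, and translate the remaining non-basic structure into the five named $2$-joins. The one place where your outline is too optimistic is the last step: the correspondence between the Chudnovsky--Seymour trigraph strip decompositions and the five named $2$-join types is not a matter of light bookkeeping but occupies a substantial portion of~\cite{king-reed-claw-free}, involving careful case analysis of which strip types can occur and how thickenings interact with the join. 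Your proposal correctly flags this as the principal obstacle but does not supply the actual verification, so as written it is a plan rather than a proof.
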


Our objective in this paper is to rule out the following property for any claw-free graph $G$.

\begin{property} 
    $\chi(G)> \mcodeg (G) + 3$. 
\end{property}

We say that a graph $G$ is {\em vertex-critical for \MPdot} if  $G$ satisfies \MPdot, but any proper induced subgraph of $G$ does not satisfy \MPdot.
Let $\mathcal{G}$ be a hereditary graph class, a graph class that is closed under taking induced subgraphs. Given $G \in \mathcal{G}$, we say $G$ is {\em $\mathcal{G}$-critical for \MPdot} if  $G$ satisfies \MPdot, but any proper subgraph of $G$ in $\mathcal{G}$ does not satisfy \MPdot.  In particular, if $G$ is $\mathcal{G}$-critical for \MP for some hereditary graph class $\mathcal{G}$, then $G$ is also vertex-critical for \MPdot. 
We denote the graph class of claw-free graphs by $\mathcal{G}_{cf}$ and the class of quasi-line graphs by $\mathcal{G}_{ql}$. The classes $\mathcal{G}_{cf}$ and $\mathcal{G}_{ql}$ are both hereditary.
Our strategy is to show that no graph type as described in \cref{thm:structure-king-reed} contains a $\mathcal{G}_{cf}$-critical graph for \MPdot.
(In the case of $\mathcal{G}_{ql}$, we use a structure theorem~\cite[Thm.~16]{ChudKingPlumSeym-rephrase-quasiline} specific to quasi-line graphs to rule out a $\mathcal{G}_{ql}$-critical graph for \MPdot.)
This implies that no claw-free graph has \MPdot, which implies that \cref{thm:maincf} holds. Moreover, we will show that there are no $\mathcal{G}_{ql}$-critical graphs for \MPdot.

\section{Defuzzification}\label{sec:defuzz}
Quasi-line graphs and claw-free graphs can be very ``fuzzy''. 
The aim of this section is to define what fuzziness is and to show that any critical graph for \MP is not fuzzy. Thus it suffices to prove our main results for non-fuzzy quasi-line and claw-free graphs. We will warm up by dealing with two easy types of fuzziness, before handling the more complex ones.

A clique cutset in a connected graph $G$ is a clique $K$ such that $G\setminus K$ is not connected. This type of fuzziness can be handled by a standard argument. We give a proof for completeness.

\begin{lem}\label{lem:no-clique-cutset} If $G$ is vertex-critical for \MPdot, then $G$ contains no clique cutset.
\end{lem}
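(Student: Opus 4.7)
The plan is a contradiction argument using the standard ``glue along a clique'' trick for clique cutsets. Suppose $G$ is vertex-critical for \MP and, for contradiction, that $G$ contains a clique cutset $K$. Since $G \setminus K$ is disconnected, we can partition $V(G) \setminus K$ into two nonempty sets $A_1, A_2$ such that no edge of $G$ joins $A_1$ to $A_2$. Define $G_i := G[A_i \cup K]$ for $i \in \{1,2\}$; then $G_1$ and $G_2$ are proper induced subgraphs of $G$ whose union (as labelled graphs) is $G$ and whose intersection is the clique $K$.

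Next I would exploit vertex-criticality. Because each $G_i$ is a proper induced subgraph of $G$, it fails \MPdot, i.e.\ $\chi(G_i) \le \mcodeg(G_i) + 3$. Since $\mcodeg$ is non-increasing under taking (induced) subgraphs, as noted in \cref{sec:prelim}, this gives $\chi(G_i) \le \mcodeg(G) + 3$ for $i=1,2$. Fix proper colorings $c_i : V(G_i) \to \{1,\dots, \mcodeg(G)+3\}$ of $G_i$ using at most $\mcodeg(G)+3$ colors.

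The key merging step uses that $K$ is a clique: in both colorings $c_1$ and $c_2$ the $|K|$ vertices of $K$ receive pairwise distinct colors. Hence there is a permutation $\pi$ of $\{1,\dots,\mcodeg(G)+3\}$ with $\pi \circ c_2|_K = c_1|_K$. Replacing $c_2$ by $\pi \circ c_2$, the colorings now agree on $K$, and since no edges join $A_1$ to $A_2$, their union defines a proper coloring of $G$ using at most $\mcodeg(G)+3$ colors. This contradicts $\chi(G) > \mcodeg(G)+3$ and completes the argument.

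I do not foresee any real obstacle here: the only ingredients are the monotonicity of $\mcodeg$ under induced subgraphs, vertex-criticality applied to the two sides, and the trivial observation that proper colorings of a clique are unique up to relabeling of colors. The same proof, as the authors note, is entirely standard in the chromatic/critical-graphs literature and is included only for completeness.
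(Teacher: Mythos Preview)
Your argument is correct and essentially identical to the paper's proof: both take proper colorings of the pieces $G[A_i \cup K]$ using at most $\mcodeg(G)+3$ colors (via vertex-criticality and monotonicity of $\mcodeg$) and glue them along the clique $K$. The only cosmetic difference is that the paper encapsulates the gluing step in the single line $\chi(G) = \max_i \chi(G_i)$, whereas you spell out the permutation of colors explicitly.
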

\begin{proof}
    Suppose $G$ is vertex-critical for \MP and let $S$ be a clique cutset in $G$. 
    Let $V_1, V_2, \dots, V_t$ be the vertex sets of the components of $G \setminus S$ and let $G_i:=G[V_i\cup S]$ for each $i \in \{1,2, \dots, t\}$.
    As $G_i$ is a proper induced subgraph of the vertex-critical graph $G$,  $\chi(G_i) \leq \mcodeg(G_i) + 3 \leq \mcodeg(G) + 3$ for all $i$. 
     Since $S$ is a clique cutset, $\chi(G) = \max_i \chi(G_i) \leq \mcodeg(G)+3$, a contradiction. 
\end{proof}

Another simple but helpful result to clear more dense fuzziness is to show that a vertex-critical graph for \MP does not have a vertex $x$ dominating a vertex $y$, which happens precisely when $N(y) \subseteq N[x]$.

\begin{lem}\label{lem:nodominatingvertices}  If $G$ is vertex-critical for \MPdot, then no vertex dominates another vertex.  
\end{lem}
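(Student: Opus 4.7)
The plan is to contradict criticality by exhibiting a proper coloring of $G$ with at most $\mcodeg(G)+3$ colors, assuming that some vertex $x$ dominates some vertex $y$, i.e.\ $N(y) \subseteq N[x]$. The natural candidate subgraph to work with is $G' := G \setminus \{y\}$: since $G'$ is a proper induced subgraph of the vertex-critical $G$, it does not satisfy \MPdot, so $\chi(G') \le \mcodeg(G') + 3 \le \mcodeg(G) + 3$. Fix such a proper coloring $c$ of $G'$ with the palette $\{1,\dots,\mcodeg(G)+3\}$. The task is then to extend $c$ to $y$, splitting into two cases depending on whether $xy$ is an edge.

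In the non-adjacent case ($xy \notin E(G)$), the domination assumption forces $N(y) \subseteq N(x)$, so I would simply assign $y$ the color $c(x)$. This is proper: $y$ is not adjacent to $x$, and every neighbor of $y$ already differs in color from $x$ in the coloring $c$ of $G'$. In the adjacent case ($xy \in E(G)$), observe that $N(x) \cap N(y) = N(y) \setminus \{x\}$ has exactly $\deg(y)-1$ elements, so
\[
\mcodeg(G) \ge \codeg(x,y) = \deg(y)-1,
\]
which gives $\deg(y) \le \mcodeg(G)+1 < \mcodeg(G)+3$. Hence at least one color of the palette is absent on $N(y)$, and we can use it for $y$.

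Either extension yields a proper coloring of $G$ with at most $\mcodeg(G)+3$ colors, contradicting \MPdot for $G$. The argument is essentially routine: the only thing one must take care of is the adjacent case, where the contribution of $x$ itself to $N(y)$ could eat into the slack, and the pleasant point is that the domination hypothesis precisely converts the edge $xy$ into a codegree lower bound, leaving enough room in the palette. No obstacles are expected beyond keeping this bookkeeping straight.
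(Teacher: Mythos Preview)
Your proof is correct and follows essentially the same approach as the paper: delete $y$, color $G-y$ by criticality, and extend to $y$. The only minor difference is that the paper handles both cases at once via the single bound $\deg(y)\le \codeg(x,y)+1\le \mcodeg(G)+1$ (which already covers the non-adjacent case, where in fact $\deg(y)=\codeg(x,y)$), whereas you split into cases and, when $xy\notin E(G)$, copy $c(x)$ onto $y$ instead of invoking a degree bound.
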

\begin{proof}
    Suppose $G$ is vertex-critical for \MP and there exist distinct $x,y \in V(G)$ with $N(y) \subseteq N[x]$.
    Then $\deg(y) \leq \codeg(x,y) +1 \leq \mcodeg(G) + 1$. In particular, any coloring of $G-y$ with at least $\mcodeg(G)+2$ colors can be extended to $G$, by using the least available color for $y$. By vertex-criticality of $G$, it follows that $\chi(G)\le \mcodeg(G-y)+2\leq \mcodeg(G)+3$, a contradiction.
\end{proof}

\subsection{Homogeneous pairs of cliques}
A \emph{homogeneous pair of cliques} in $G$ is defined in~\cite{ChudSeym05} as a pair $(A, B)$ such that
\begin{itemize}
    \item $A$, $B$ are cliques in $G$,
    \item every $v \in G\setminus (A \cup B)$ is either complete or anti-complete to $A$, and $v$ is also complete or anti-complete to $B$, and
    \item $|A|+|B|\geq 3$. 
\end{itemize}
In this subsection, we show how we may assume that $G$ has no homogeneous pair of cliques.
A homogeneous pair of cliques $(A,B)$ is \emph{non-trivial}\footnote{\emph{Non-trivial} homogeneous pairs of cliques are also referred to as \emph{nonlinear}.} if there exists an induced $C_4$ in the subgraph induced by $A \cup B$.
Since $A,B$ are cliques, this is equivalent to there being an induced $C_4$ containing two vertices of $A$ and two vertices of $B$.

\begin{lem}\label{lem:trivial-homog-pair-of-cliques}
 If $G$ is vertex-critical for \MPdot, then $G$ has no trivial pair of homogeneous cliques.
\end{lem}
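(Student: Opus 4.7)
The plan is to derive a contradiction with \cref{lem:nodominatingvertices} by exhibiting, inside any trivial homogeneous pair of cliques $(A, B)$, a vertex that is dominated by another. Let $(A, B)$ be such a pair in a graph $G$ that is vertex-critical for \MPdot, with $|A|+|B|\ge 3$.

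First I would unpack the triviality condition at the level of neighborhoods. Given any $a_1, a_2 \in A$ and $b_1, b_2 \in B$, the clique conditions force $a_1 a_2, b_1 b_2 \in E(G)$, so $G[\{a_1,a_2,b_1,b_2\}]$ induces a $C_4$ exactly when, up to relabeling, $a_1 b_1, a_2 b_2 \in E(G)$ while $a_1 b_2, a_2 b_1 \notin E(G)$. Triviality of $(A,B)$ therefore translates to the requirement that the family $\{\,N(a)\cap B : a\in A\,\}$ is totally ordered by inclusion (and, symmetrically, so is $\{\,N(b)\cap A : b\in B\,\}$).

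Assuming $|A|\ge 2$, I would pick $a\in A$ minimizing $N(a)\cap B$ in this chain, together with any other $a'\in A\setminus\{a\}$, so that $N(a)\cap B \subseteq N(a')\cap B$. Since $A$ is a clique, $a$ and $a'$ have identical neighbors in $A\setminus\{a,a'\}$; by the homogeneous pair property, every vertex of $V(G)\setminus(A\cup B)$ is complete or anticomplete to $A$, hence adjacent to $a$ iff adjacent to $a'$. Combined with $aa'\in E(G)$, this yields $N(a)\subseteq N[a']$, i.e.\ $a'$ dominates $a$, contradicting \cref{lem:nodominatingvertices}.

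The remaining case $|A|=1$ (and symmetrically $|B|=1$) I would handle separately: if $A=\{a_0\}$ then $|B|\ge 2$, and any two distinct $b, b'\in B$ are adjacent and share all neighbors outside $A\cup B$ by homogeneity, so their neighborhoods can differ only in whether they are adjacent to $a_0$. A short case check (whichever of $b, b'$ is adjacent to $a_0$ dominates the other; and if neither is, then $b$ and $b'$ are twins, so either dominates the other) again produces a dominated vertex, contradicting \cref{lem:nodominatingvertices}. I do not expect a serious obstacle here: homogeneity already makes vertices within $A$ behave like twins apart from the bipartite structure to $B$, and triviality eliminates the only remaining freedom in that structure, so the main task is just bookkeeping these few cases.
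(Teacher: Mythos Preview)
Your proof is correct and follows essentially the same approach as the paper's: both arguments use the absence of an induced $C_4$ in $G[A\cup B]$ to deduce that the neighborhoods $\{N(a)\cap B:a\in A\}$ are nested, combine this with the homogeneous pair property to find a dominated vertex, and invoke \cref{lem:nodominatingvertices} for the contradiction. Your write-up is more explicit in separating the case $|A|=1$ (which the paper handles implicitly via the symmetry between $A$ and $B$), but the underlying idea is identical.
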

\begin{proof}
    Suppose $G$ is vertex-critical for \MP and admits a trivial homogeneous pair of cliques $(A,B)$. Let $J$ be the graph induced by $A \cup B$.
    Since $J$ contains no induced $C_4$, observe that for all $x_1, x_2 \in A$ and all $x_1, x_2 \in B$, either $N(x_1) \subseteq N[x_2]$ or $N(x_2) \subseteq N[x_1]$.
    Since no vertex outside of $A \cup B$ may be mixed on $A$ or on $B$ and $|A \cup B| \geq 3$, it follows that $G$ contains two distinct vertices $x_1, x_2$ so that $x_1$ dominates $x_2$.
    This contradicts \cref{lem:nodominatingvertices}. 
\end{proof}
%

To see that non-trivial homogeneous pairs of cliques can be disregarded, we point out the results of Chudnovsky and Fradkin~\cite[Lem.~5.2]{ChudFrad08} and of King and Reed~\cite[Lem.~2.3]{king-reed-claw-free}. We use a slightly stronger version of the result Chudnovsky and Fradkin stated in their paper, which already holds with their proof. In this version, the graph $H$ is guaranteed to be a subgraph of $G$ rather than a minor.

\begin{lem}[\cite{ChudFrad08} Lemma 5.2 (reformulated)]\label{thm:nontrivialpairOG}
  If $G$ is a quasi-line graph that has a non-trivial homogeneous pair of cliques, then there exists
a proper subgraph $H$ of $G$ that is quasi-line and satisfies $\chi(H)=\chi(G)$. 
\end{lem}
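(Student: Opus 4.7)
The goal is to exhibit a proper induced subgraph $H$ of $G$ that is still quasi-line (automatic, since the class $\mathcal{G}_{ql}$ is hereditary) and that satisfies $\chi(H)=\chi(G)$. My plan is to find a single vertex $v \in A\cup B$ so that $H = G-v$ works; if $v$ can always be chosen so that any $(\chi(G)-1)$-coloring of $G-v$ extends to $G$, we are done.

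First I would analyze the fine structure of $G[A\cup B]$. Let $M$ be the bipartite graph on $A\cup B$ whose edges are the \emph{non-edges} of $G$ between $A$ and $B$. The quasi-line hypothesis, applied at each $a\in A$ (so that $N[a]$ is covered by two cliques) forces $M$ to have maximum degree at most $2$: if some $a$ had three non-neighbors $b_1,b_2,b_3\in B$, then since $B$ is a clique, $\{b_1,b_2,b_3\}$ would need to be packed into the two cliques partitioning $N[a]$, but they are not in $N[a]$ at all, producing a contradiction with a standard case analysis that I would carry out by looking at a vertex outside $A\cup B$ adjacent to $a$ together with the $b_i$. Thus $M$ is a disjoint union of paths and even cycles. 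A further short argument rules out cycles, so $M$ is a linear forest.

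Next I would use non-triviality: the induced $C_4$ guaranteed in $A\cup B$ produces two crossing non-edges $a_1b_2$ and $a_2b_1$ of $M$, so $M$ contains at least one path of length $\ge 1$. Among the endpoints of the paths of $M$ I pick one such vertex, say $v \in A$, and set $H = G-v$. I would then show $\chi(H)=\chi(G)$ as follows. Take a proper coloring $c$ of $H$ with at most $\chi(G)-1$ colors; if some color is missing on $N_G(v)$, extend and contradict $\chi(G)$. Otherwise all colors appear on $N_G(v) = (A\setminus\{v\}) \cup (B\cap N_G(v)) \cup X$, where $X$ is the set of vertices outside $A\cup B$ complete to $A$. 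Because $(A,B)$ is homogeneous, every vertex in $X$ is either complete or anticomplete to $B$, so colors used on $X$ interact with $A$ and $B$ symmetrically. I would then perform a Kempe-swap along the path of $M$ starting at $v$: swap the color of some carefully chosen $a_i\in A$ with a color appearing on $B\setminus N_G(v)$ (which is non-empty because $v$ has a non-neighbor in $B$, namely its partner in $M$). The linear-forest structure of $M$ ensures that the swap propagates cleanly and preserves properness, while the crossing pair of non-edges provided by non-triviality is precisely what guarantees the swap frees a color for $v$.

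The main obstacle is the last step — verifying that the Kempe-type exchange really succeeds in freeing a color at $v$. This requires tracking how colors are forced along the path of $M$ containing $v$ and making essential use of both non-triviality (the existence of a crossing in $M$) and homogeneity (external vertices cannot distinguish between the vertices being swapped in $A$ or in $B$). If the exchange at $v$ fails, I would iterate along the path in $M$, and the linear-forest structure together with non-triviality will guarantee that the process terminates successfully at some endpoint, yielding the required proper subgraph $H$.
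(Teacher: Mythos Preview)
The paper does not prove this lemma; it is imported from Chudnovsky and Fradkin (their Lemma~5.2), with the remark that their proof already yields a proper \emph{subgraph} rather than merely a minor. Their approach is quite different from yours: one does not delete a vertex, but replaces the bipartite edge-pattern between $A$ and $B$ by a canonical linear-interval pattern on the same vertex set, chosen so that the parameters governing $\chi$ are preserved; since the original pair contains an induced $C_4$, this strictly removes edges.

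Your proposal has a genuine gap at its first structural step. The claim that the bipartite non-edge graph $M$ on $A\cup B$ has maximum degree at most~$2$ is false. Take $A=\{a_1,a_2,a_3\}$ and $B=\{b_1,b_2,b_3,b_4\}$, both cliques, with every $A$--$B$ pair adjacent except $a_1b_1,\,a_1b_2,\,a_1b_3,\,a_2b_4$. One checks directly that this graph is quasi-line (each closed neighbourhood splits as an $A$-side clique together with a $B$-side clique), and $(A,B)$ is a non-trivial homogeneous pair since $a_1,a_2,b_1,b_4$ induce a $C_4$ (edges $a_1a_2,\,a_2b_1,\,b_1b_4,\,b_4a_1$; non-edges $a_1b_1,\,a_2b_4$); yet $a_1$ has three non-neighbours in $B$. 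Adding an outside vertex complete to $A$ and anticomplete to $B$ does not change this. Your justification is already confused at the outset --- the non-neighbours of $a$ are not in $N[a]$, so the two-clique cover of $N[a]$ imposes no constraint on them --- and the promised ``standard case analysis'' with an outside vertex cannot rescue it. Since the linear-forest picture of $M$ fails, there is no path along which your Kempe-type exchange could propagate, and the description of that exchange (``propagates cleanly'', ``frees a color'') is in any case only a sketch, not a proof.
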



\begin{lem}[\cite{king-reed-claw-free} Lemma 2.3 (reformulated)
]\label{lem: No non triv hom pair claw-free}
If $G$ is a claw-free graph that has a non-trivial homogeneous pair of cliques, then there exists
a proper subgraph $H$ of $G$ that is claw-free and satisfies $\chi(H)=\chi(G)$.
\end{lem}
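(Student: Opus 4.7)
The plan is to exhibit a proper subgraph $H$ of $G$ obtained by vertex deletion inside $A \cup B$ (possibly after identifying twins). Since any induced subgraph of a claw-free graph is claw-free, the only real content of the lemma is ensuring that the chromatic number does not drop. Accordingly, I would argue by contradiction: assume $G$ is vertex-critical for $\chi$, so $\chi(G-v) = \chi(G)-1$ for every $v \in V(G)$, and derive a contradiction from the non-trivial homogeneous pair $(A,B)$.

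First, I would set up the standard partition of $V(G) \setminus (A\cup B)$ into four classes $C_A$, $C_B$, $C_{AB}$, $C_\emptyset$ according to complete/anti-complete relations with $A$ and with $B$. By non-triviality, pick an induced $C_4$ on $\{a_1,a_2,b_1,b_2\}$ with $a_1,a_2\in A$, $b_1,b_2\in B$, $a_1b_1, a_2b_2\in E(G)$, and $a_1b_2, a_2b_1\notin E(G)$. Note that, by the homogeneous pair definition, all vertices of $A$ have identical adjacencies to $V(G)\setminus (A\cup B)$, and similarly for $B$; so two vertices of $A$ differ in their open neighborhoods only through their traces on $B$.

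Next, I would look for twins. If there are distinct $a,a'\in A$ with $N(a)\cap B = N(a')\cap B$, then $a$ and $a'$ are true twins in $G$ (identical closed neighborhoods), and deleting one preserves $\chi$. The analogous reduction applies inside $B$. So one may assume every vertex of $A$ has a distinct trace on $B$ and vice versa; in particular, the bipartite graph between $A$ and $B$ is ``discriminating''.

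The main obstacle is the no-twin case, which calls for a Kempe-chain recoloring. Take an optimal coloring of $G - a_1$ using $\chi(G)-1$ colors, guaranteed by the assumed criticality. The aim is to extend it to a proper $(\chi(G)-1)$-coloring of $G$, contradicting criticality. The strategy is to perform a color swap along a chain that crosses the $C_4$: roughly, exchange the color of $a_1$ with a color used on some $a_i \in A$ whose distinguishing neighbor in $B$ can also be recolored using the $C_4$ structure. Claw-freeness is essential to ensure such Kempe chains do not escape in a way that blocks the swap: a claw centered at a vertex of $C_\emptyset$ or $C_{AB}$ would force three pairwise non-adjacent neighbors drawn from $A\cup B$, contradicting that $A$ and $B$ are cliques whose ``between'' structure is controlled by the chosen $C_4$. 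Carefully enumerating how the chain can propagate through $C_A, C_B, C_{AB}, C_\emptyset$ and using claw-freeness to terminate it, one produces a proper $(\chi(G)-1)$-coloring of $G$, the desired contradiction. This recoloring analysis is the hard part of the proof.
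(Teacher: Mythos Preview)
The paper does not give its own proof of this lemma; it is quoted from King and Reed without argument. So I will evaluate your attempt on its merits and against the known proof.

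Your twin-reduction step contains a genuine error. You assert that if $a,a'\in A$ have identical closed neighborhoods then ``deleting one preserves $\chi$.'' That is false for adjacent twins: every two vertices of $K_n$ are adjacent true twins, yet $K_n$ is vertex-critical and deleting any vertex strictly decreases the chromatic number. So discovering twins inside $A$ does not contradict vertex-criticality and does not hand you the desired proper subgraph. Beyond this, the Kempe-chain portion is only a wish, not a proof: you specify no concrete swap, make no real use of the $C_4$ beyond naming it, and give no termination argument for the chain.

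The actual King--Reed (and Chudnovsky--Fradkin) argument proceeds by deleting \emph{edges} between $A$ and $B$, not vertices. The point is that, by homogeneity, any proper coloring can freely permute the colors within $A$ and within $B$; consequently $\chi(G)$ depends on the bipartite graph $J$ between $A$ and $B$ only through the matching number of its bipartite complement $\overline{J}$ (this is the maximum number of $A$--$B$ pairs that can share a color). Non-triviality means $J$ contains an induced $2K_2$, and one shows that some edge of $J$ can be removed without increasing that matching number, yielding a proper subgraph $H$ with $\chi(H)=\chi(G)$. A short separate check, using the induced $C_4$ already present in $G[A\cup B]$, shows that removing $A$--$B$ edges from a claw-free $G$ cannot create a claw. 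This mechanism is entirely different from your vertex-deletion plan and sidesteps the twin issue altogether.
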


From \cref{lem:trivial-homog-pair-of-cliques,thm:nontrivialpairOG,lem: No non triv hom pair claw-free}, we can conclude the following.
\begin{lem}\label{lem:no-homog-pair-of-cliques}
If $G$ is a $\mathcal{G}_{cf}$- or $\mathcal{G}_{ql}$-critical graph for \MPdot, then $G$ has no pair of homogeneous cliques.
\end{lem}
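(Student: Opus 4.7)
The plan is to handle the two flavors of homogeneous pairs of cliques separately and then combine. Suppose for contradiction that $G$ is $\mathcal{G}_{cf}$- or $\mathcal{G}_{ql}$-critical for Property~$(\star)$ and $(A,B)$ is a homogeneous pair of cliques in $G$. By definition, either $(A,B)$ is trivial (i.e.\ the subgraph induced by $A\cup B$ contains no induced $C_4$) or it is non-trivial. I would treat these two cases in turn.

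For the trivial case, I would simply invoke \cref{lem:trivial-homog-pair-of-cliques}. The small technical point to record is that being $\mathcal{G}_{cf}$-critical or $\mathcal{G}_{ql}$-critical for Property~$(\star)$ implies, by the definitions given just before the previous section, that $G$ is also vertex-critical for Property~$(\star)$, which is the hypothesis of \cref{lem:trivial-homog-pair-of-cliques}. So this case is immediate.

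For the non-trivial case, I would apply whichever of \cref{thm:nontrivialpairOG} or \cref{lem: No non triv hom pair claw-free} matches the hereditary class $\mathcal{G}\in\{\mathcal{G}_{cf},\mathcal{G}_{ql}\}$ of which $G$ is critical. Each of these produces a proper subgraph $H$ of $G$ with $H\in\mathcal{G}$ and $\chi(H)=\chi(G)$. Because $\mcodeg$ is monotone non-increasing under taking subgraphs (as noted in \cref{sec:prelim}), we have $\mcodeg(H)\le\mcodeg(G)$, and therefore
\[
\chi(H)=\chi(G) > \mcodeg(G)+3 \ge \mcodeg(H)+3,
\]
so $H$ also satisfies Property~$(\star)$. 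This contradicts the $\mathcal{G}$-criticality of $G$, and the lemma follows.

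The proof is essentially a bookkeeping combination of the three preceding lemmas, so there is no real obstacle; the only thing that needs care is to verify that the proper subgraph produced in the non-trivial case both lives in the same hereditary class $\mathcal{G}$ and satisfies Property~$(\star)$, which is exactly what the monotonicity of $\mcodeg$ together with the chromatic-number equality gives us.
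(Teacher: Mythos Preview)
Your proposal is correct and matches the paper's approach exactly: the paper simply states that the lemma follows from \cref{lem:trivial-homog-pair-of-cliques,thm:nontrivialpairOG,lem: No non triv hom pair claw-free}, and you have written out precisely the bookkeeping that makes this deduction go through.
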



\subsection{Thickenings}

 Two distinct vertices $v$ and $w$ are said to be \emph{twins} if $N[v]=N[w]$, and so $v$ dominates $w$ (and $w$ dominates $v$).
In this subsection, we give a definition of thickenings (as given in~\cite{king-reed-claw-free}) and we show that if a graph is a strict thickening of some other graph, then it has either a homogeneous pair of cliques or a pair of twins. 

Let $G_0$ be a claw-free graph and let $M$ be a matching in $G_0$ such that $G_0-e$ is claw-free for any  $e\in M$. 
The graph $G$ is said to be a \emph{thickening of $G_0$ (under $M$)} if we can construct it from $G_0$ as follows:
\begin{enumerate}
    \item Substitute each vertex $v$ of $G_0$ by some non-empty clique $I(v)$.
    \item For every $uv \in E(G_0)$, add all edges between $I(u)$ and $I(v)$.
    \item For every $uv \in M$, remove a nonempty proper subset of the edges between $I(u)$ and $I(v)$.
\end{enumerate} 
A thickening is \emph{strict} if $|G_0| < |G|$.
For a set $S \subseteq V(G_0)$, we use $I(S)$ to denote $\bigcup_{v\in S} I(v)$. 

We now prove that any strict thickening either has a pair of twins or a homogeneous pair of cliques in specific places. 
This will then be used to show that strict thickenings do not appear in graphs which are either $\mathcal{G}_{cf}$- or $\mathcal{G}_{ql}$-critical for \MPdot.

\begin{lem}\label{obs:thickening-means-homog-or-twins}
   If $G$ is a strict thickening of some graph $G_0$, then either $G$ contains a pair of twins $a,b \in I(v)$ for some $v \in V(G_0)$ or there exists some edge $vw \in M$ such that $(I(v),I(w))$ is a homogeneous pair of cliques. 
\end{lem}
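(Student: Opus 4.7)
The plan is to exploit the strict-thickening hypothesis to produce a vertex $v_0 \in V(G_0)$ with $|I(v_0)| \geq 2$, and then to argue by case split: if the $I(v)$'s contain no pair of twins, I will construct an edge of $M$ whose blow-up is a homogeneous pair of cliques. The engine of the argument is the observation that for any $x \in I(u) \setminus I(v_0)$, the adjacency pattern of $x$ to $I(v_0)$ is entirely determined by whether $uv_0 \in E(G_0)$, unless $uv_0 \in M$; indeed, if $uv_0 \notin E(G_0)$ then $x$ is anticomplete to $I(v_0)$, and if $uv_0 \in E(G_0)\setminus M$ then $x$ is complete to $I(v_0)$. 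So only vertices sitting inside $I(w)$ with $v_0 w \in M$ can distinguish two elements of $I(v_0)$.

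Given that observation, I would fix any two distinct $a,b \in I(v_0)$ (they are adjacent since $I(v_0)$ is a clique) and assume we are not in the twin case, so $a$ and $b$ are not twins. Then some vertex $x$ is adjacent to exactly one of $a,b$, and by the observation above $x \in I(w_0)$ for some edge $v_0 w_0 \in M$. In particular $|I(v_0)| + |I(w_0)| \geq 2+1 = 3$, which handles the size requirement in the definition of a homogeneous pair of cliques.

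It then remains to check that every vertex outside $I(v_0) \cup I(w_0)$ is complete or anticomplete to each of $I(v_0)$ and $I(w_0)$. Here the matching condition does the work: any such vertex $y$ lies in some $I(u')$ with $u' \notin \{v_0, w_0\}$, and since $v_0 w_0 \in M$ saturates both $v_0$ and $w_0$ in the matching $M$, neither $u'v_0$ nor $u'w_0$ can be in $M$. By the construction of the thickening, $y$ is then complete or anticomplete to $I(v_0)$ according to whether $u'v_0 \in E(G_0)$, and symmetrically for $I(w_0)$. I do not anticipate a serious obstacle; the only delicate point is the matching bookkeeping in the last step, and the conceptual core is simply the slogan \emph{$M$-edges are the only places where the substitution operation can break the uniformity of adjacencies inside a blown-up clique}.
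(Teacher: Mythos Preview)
Your argument is correct and rests on the same core observation as the paper's proof: only edges of $M$ can cause a vertex outside $I(v)$ to be mixed on $I(v)$, so homogeneity follows from $M$ being a matching. The paper organizes things slightly more directly by splitting on whether $M=\emptyset$ and, when $M\neq\emptyset$, picking \emph{any} $ab\in M$ and using the ``nonempty proper subset'' clause in the definition of thickening to obtain $|I(a)|+|I(b)|\ge 3$ --- so it never needs to locate a large $I(v_0)$ via strictness or chase a distinguishing vertex.
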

\begin{proof}
If $M = \emptyset$, then $G$ arises from $G_0$ by vertex multiplication. So for every $v \in V(G_0)$, any two distinct vertices $a,b \in I(v)$ is a pair of adjacent twins in $G$. The claim then follows from the fact that the thickening is strict.

Let $ab \in M$. By definition, $I(a), I(b)$ are disjoint cliques of $G$.
Since $M$ is a matching, no vertex of $G \setminus (I(a) \cup I(b))$ is mixed on either $I(a)$ or $I(b)$.
In order for there to be some nonempty proper subset of the edges between $I(a)$ and $I(b)$, at least one of $I(a)$ or $I(b)$ must be a clique of size at least two. Hence, $(I(a)$, $I(b))$ is a homogeneous pair of cliques in $G$. 
\end{proof}

Since a strict thickening contains either a pair of twins (hence a vertex dominating another) or a homogeneous pair of cliques, the following is a consequence of \cref{lem:nodominatingvertices,lem:no-homog-pair-of-cliques,obs:thickening-means-homog-or-twins}. 

\begin{cor}\label{lem:thickenings}
  If $G$ is a $\mathcal{G}_{cf}$-critical or $\mathcal{G}_{ql}$-critical for \MPdot, then $G$ is not a strict thickening of some graph $G_0$.
\end{cor}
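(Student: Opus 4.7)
The plan is to proceed by contrapositive, supposing that $G$ is a strict thickening of some graph $G_0$, and then invoking the observation together with the two structural constraints already established (no dominating vertex and no homogeneous pair of cliques) to derive a contradiction. Since \cref{obs:thickening-means-homog-or-twins} produces a dichotomy — either a pair of twins inside some $I(v)$, or a homogeneous pair of cliques of the form $(I(v), I(w))$ with $vw \in M$ — it is natural to organize the argument as two short cases that each land in one of the prior lemmas.

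First I would apply \cref{obs:thickening-means-homog-or-twins} to $G$, which (being a strict thickening) yields one of the two configurations. In the twins case, any pair of distinct vertices $a, b \in I(v)$ satisfy $N[a] = N[b]$, so in particular $N(b) \subseteq N[a]$, i.e.\ $a$ dominates $b$. This directly contradicts \cref{lem:nodominatingvertices}, which applies since $G$ is vertex-critical for \MPdot\ (both $\mathcal{G}_{cf}$-criticality and $\mathcal{G}_{ql}$-criticality imply vertex-criticality, as noted in \cref{sec:prelim}).

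In the other case, \cref{obs:thickening-means-homog-or-twins} produces an edge $vw \in M$ such that $(I(v), I(w))$ is a homogeneous pair of cliques in $G$; note in particular that the proof of the observation already ensures $|I(v)| + |I(w)| \geq 3$, so this is a genuine homogeneous pair of cliques in the sense of the definition used in \cref{sec:prelim}. This immediately contradicts \cref{lem:no-homog-pair-of-cliques}, which rules out any homogeneous pair of cliques (trivial or non-trivial) in a graph that is $\mathcal{G}_{cf}$- or $\mathcal{G}_{ql}$-critical for \MPdot.

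There is no real obstacle here: the statement is essentially a bookkeeping consequence of the three results cited in the hint, and the only minor point worth stating carefully is that $\mathcal{G}$-criticality (for either $\mathcal{G} = \mathcal{G}_{cf}$ or $\mathcal{G} = \mathcal{G}_{ql}$) entails vertex-criticality, so that \cref{lem:nodominatingvertices} applies in both settings. With that noted, the two cases together yield the contradiction and complete the proof.
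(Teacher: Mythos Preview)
Your proposal is correct and follows essentially the same approach as the paper: apply \cref{obs:thickening-means-homog-or-twins} to obtain either twins (hence a dominating vertex, contradicting \cref{lem:nodominatingvertices}) or a homogeneous pair of cliques (contradicting \cref{lem:no-homog-pair-of-cliques}). The paper simply states the corollary as an immediate consequence of these three results, and your write-up spells out the same deduction with a bit more care about why vertex-criticality is available.
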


\subsection{Generalized 2-joins}
We showed that if $G$ is a $\mathcal{G}_{ql}$- or $\mathcal{G}_{cf}$-critical graph for \MPdot, then it is not a strict thickening as a whole. 
However, \cref{thm:structure-king-reed} already hints at the fact that certain types of claw-free graphs (and of quasi-line graphs) admit so-called $2$-joins. Such operations separate the graph into two parts. We now focus on the definition of the generalized $2$-join (as given in~\cite{king-reed-claw-free}) to show that no side of a $2$-join can be a strict thickening.

    Suppose vertex sets $V_1$ and $V_2$ partition $V(G)$ and for $i\in\{1,2\}$, let $G_i=G[V_i]$. Suppose there are cliques $X_i$ and $Y_i$ in $V_i$ such that $X_1 \cup X_2$ and $Y_1 \cup Y_2$ are cliques, and there are no other edges between $V_1$ and $V_2$. Then we call $((X_1 , Y_1 ), (X_2 , Y_2 ))$ a {\em generalized $2$-join} of $G$.

A reader already familiar with the terminology \emph{composition of strips}~\cite{ChudSeym05} may observe that $G$ admits a generalized $2$-join $((X_1,Y_1),(X_2,Y_2))$ if and only if $G$ is the composition of strips $(G_1, a_1, b_1)$ with $(G_2, a_2, b_2)$, where the $a_i$ and $b_i$ are simplicial vertices.

\begin{lem}\label{lem:thickening-and-$2$-join}
Suppose $G$ admits a generalized $2$-join $((X_1,Y_1),(X_2,Y_2))$ such that $G_2$ is a strict thickening of some graph $H_2$ and such that $X_2 = I(A)$ and $Y_2 =I(B)$ where $A,B$ are cliques of $H_2$. Then $G$ is neither $\mathcal{G}_{cf}$-critical nor $\mathcal{G}_{ql}$-critical for \MPdot.
\end{lem}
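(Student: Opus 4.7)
The plan is to apply \cref{obs:thickening-means-homog-or-twins} to $G_2$, viewed as a strict thickening of $H_2$. This immediately provides two cases: either $G_2$ contains a pair of twins $a,b \in I(v)$ for some $v \in V(H_2)$, or there is a matching edge $vw \in M$ such that $(I(v), I(w))$ is a homogeneous pair of cliques of $G_2$. My goal in each case is to lift that structure from $G_2$ to the ambient graph $G$, and then invoke \cref{lem:nodominatingvertices} or \cref{lem:no-homog-pair-of-cliques} to contradict criticality.

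The key unifying observation I would set up first is that, thanks to the generalized $2$-join structure, a vertex $u \in V_1$ is adjacent to $x \in V_2$ if and only if either ($u \in X_1$ and $x \in X_2$) or ($u \in Y_1$ and $x \in Y_2$). Combined with $X_2 = I(A)$ and $Y_2 = I(B)$, this yields that for every $u \in V_1$ and every $v \in V(H_2)$, whether $u$ is adjacent to a vertex $x \in I(v)$ depends only on which of $A,B$ contain $v$ and which of $X_1, Y_1$ contain $u$, and not on the specific choice of $x \in I(v)$. Thus every $u \in V_1$ is complete or anticomplete to each $I(v)$.

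In the twins case, the observation gives $N_G(a) \cap V_1 = N_G(b) \cap V_1$, which combined with the hypothesis $N_{G_2}[a] = N_{G_2}[b]$ yields $N_G[a] = N_G[b]$. Hence $a$ dominates $b$ in $G$, contradicting \cref{lem:nodominatingvertices}. In the homogeneous pair case, the observation implies that every $u \in V_1$ is complete or anticomplete to each of $I(v)$ and $I(w)$. Together with the hypothesis that $(I(v), I(w))$ is homogeneous inside $G_2$, this shows $(I(v), I(w))$ is a homogeneous pair of cliques in all of $G$; the size condition $|I(v)| + |I(w)| \geq 3$ is automatic, since $vw \in M$ means a nonempty proper subset of edges was removed between them, forcing $|I(v)| \cdot |I(w)| \geq 2$. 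This contradicts \cref{lem:no-homog-pair-of-cliques}, as $G$ is assumed $\mathcal{G}_{cf}$- or $\mathcal{G}_{ql}$-critical for \MPdot.

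The main obstacle is really just careful bookkeeping: verifying that each $I(u)$ sits uniformly relative to the cliques $X_2, Y_2$ (i.e., $I(u) \subseteq X_2$ iff $u \in A$, and $I(u) \cap X_2 = \emptyset$ otherwise), and that this uniformity propagates through the 2-join so that no $V_1$-vertex can split any $I(v)$. Once this observation is articulated, both cases are essentially immediate applications of the defuzzification lemmas established earlier in the section.
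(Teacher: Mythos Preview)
Your proposal is correct and follows essentially the same approach as the paper: apply \cref{obs:thickening-means-homog-or-twins} to $G_2$, then use the $2$-join structure (specifically that $X_2 = I(A)$ and $Y_2 = I(B)$, so each $I(v)$ lies entirely inside or outside each of $X_2,Y_2$) to lift the twins or homogeneous pair of cliques from $G_2$ to $G$, and conclude via \cref{lem:nodominatingvertices} or \cref{lem:no-homog-pair-of-cliques}. Your write-up is slightly more explicit than the paper's (you spell out the key observation about $V_1$-vertices being complete or anticomplete to each $I(v)$, and you verify the size condition $|I(v)|+|I(w)|\ge 3$), but the argument is the same.
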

\begin{proof}
By \cref{obs:thickening-means-homog-or-twins}, $G_2$ contains either a pair of twins $a,b \in I(v)$ or a homogeneous pair of cliques $I(v)$ and $I(w)$. We show that the nature of those pairs remains unchanged by the generalized $2$-join, from which we conclude that $G$ is not critical for \MPdot.

Suppose $G_2$ contains a pair of twins $a,b \in I(v)$. By the definition of $X_2$ and $Y_2$, each of these sets contains either both $a$ and $b$ or neither of them. Since the only adjacencies between vertices of $G_1$ and $G_2$ lie in the cliques $X_1 \cup X_2$ and $Y_1 \cup Y_2$, we see that $a,b$ is a pair of twins in $G$.

Suppose $G_2$ contains a homogeneous pair of cliques $I(v)$ and $I(w)$. By the definition of $X_2$ and $Y_2$, each of these sets  contains either all vertices of $I(v)$, respectively $I(w)$, or none of them. Since the only adjacencies between vertices of $G_1$ and $G_2$ lie in the cliques $X_1 \cup X_2$ and $Y_1 \cup Y_2$, every vertex of $G_1$ is either complete or anticomplete to $I(v)$ and also to $I(w)$. Hence, $I(v)$ and $I(w)$ are also a homogeneous pair of cliques in $G$.
\end{proof}

Notice that the roles $G_1$ and $G_2$ may be exchanged in the previous result. For the different types of $2$-joins we consider later, the cliques in $G_2$ will be always of the form $X_2 = I(A)$ and $Y_2=I(B)$. Hence, \cref{lem:thickening-and-$2$-join} will allow us to assume that $G_2$ is not a strict thickening. 



\section{Quasi-line graphs}\label{sec:quasiline}

The aim of this section is to complete the proof that the bound in Vizing's theorem (given in terms of maximum codegree) can be extended to quasi-line graphs.

\mainquasiline*

To prove this bound, we will show that no quasi-line graph is $\mathcal{G}_{ql}$-critical for \MPdot. The structure of quasi-line graphs is fully described in~\cite{ChudSeym-quasiline-cfVII} using trigraphs. However, \cref{lem:no-homog-pair-of-cliques} ensures that no $\mathcal{G}_{ql}$-critical graph for \MP contains a homogeneous pair of cliques, and so we can use a more practical structure theorem~\cite[Thm.~16]{ChudKingPlumSeym-rephrase-quasiline}.

\begin{thm}[\cite{ChudKingPlumSeym-rephrase-quasiline} Theorem 16]\label{thm: structure quasiline}
    If $G$ is a quasi-line graph that has no non-trivial homogeneous pair of cliques, then one of the following holds. 
\begin{itemize}
    \item $G$ contains a clique cutset. 
    \item $G$ is a line graph of a multigraph.
    \item $G$ is a circular interval graph.
    \item $G$ admits a canonical interval $2$-join.
\end{itemize}
\end{thm}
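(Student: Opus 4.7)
The plan is to derive this reformulation from the original structure theorem for quasi-line graphs of Chudnovsky and Seymour in the \emph{Claw-Free Graphs VII} paper, which classifies quasi-line \emph{trigraphs} (graphs with a distinguished set of ``optional'' edges) rather than graphs per se. That theorem states, roughly, that every connected quasi-line trigraph is either a fuzzy circular interval trigraph or is obtained by composing linear interval strips, with the semi-adjacent (``optional'') pairs precisely accounting for homogeneous pairs of cliques when one realizes the trigraph as an honest graph. The goal is to show that, under the hypothesis that $G$ has no non-trivial homogeneous pair of cliques, the trigraph realized by $G$ uses no non-trivial semi-adjacency, so the original classification collapses to the four clean cases listed.

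First, I would invoke the Chudnovsky--Seymour theorem to obtain a trigraph presentation of $G$. Every non-trivial homogeneous pair of cliques in $G$ corresponds either to a non-trivial semi-adjacency pattern or to a non-trivial thickening step in the construction; since $G$ has none, such patterns can be assumed trivial, and the trigraph is essentially a graph. The classification then says that the resulting graph is a fuzzy circular interval graph (which with no non-trivial semi-adjacency degenerates to an ordinary circular interval graph), or it is built by composing linear interval strips at simplicial cliques.

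Second, I would analyze the strip composition case. A linear interval strip is (up to the same degeneracy) the line graph of a graph with two distinguished simplicial edges, and the composition operation glues strips along these distinguished cliques. If the gluing happens at a clique whose removal disconnects $G$, then case~1 (clique cutset) applies. If all gluings occur between single-vertex distinguished cliques (the edge-case of the operation) and every strip is a line graph piece, then the composition itself is a line graph of a multigraph, giving case~2. In the remaining situation, some strip $(G_i,X_i,Y_i)$ is glued along non-trivial cliques to a complementary side in a way that matches the definition of a canonical interval $2$-join, producing case~4. I would verify that these three sub-cases exhaust the possibilities by a direct case analysis on the number of strips and the sizes of the gluing cliques.

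The main obstacle is bookkeeping at the trigraph-to-graph translation: one must check that every non-trivial semi-adjacency in the Chudnovsky--Seymour presentation really does force a non-trivial homogeneous pair of cliques in the realized graph $G$, so that our hypothesis legitimately rules them out. A secondary subtlety is the degenerate boundary between ``line graph of a multigraph'' and ``admits a canonical interval $2$-join''—one has to fix a convention (as in~\cite{ChudKingPlumSeym-rephrase-quasiline}) deciding, for single-strip or minimally-glued compositions, which of cases~2 and~4 is declared to hold. Once these correspondences are pinned down, the four outcomes follow by direct inspection of the original theorem.
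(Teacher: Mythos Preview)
The paper does not prove this statement. It is quoted verbatim as Theorem~16 of \cite{ChudKingPlumSeym-rephrase-quasiline} and used as a black box; the paper only applies it in combination with \cref{lem:no-homog-pair-of-cliques} to restrict a $\mathcal{G}_{ql}$-critical graph for \MP to one of the four listed outcomes, each of which is then handled in \cref{lem:linegraph_multigraphs,lem:circular_interval_graphs,lem:canonical interval 2 join}. There is therefore nothing in the paper to compare your proposal against.

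Your sketch is a plausible outline of how the cited authors derive the stated version from the Chudnovsky--Seymour trigraph classification, but since the present paper neither proves nor re-derives this structure theorem, supplying such an argument is unnecessary here. If you intend to include a proof, you should be aware that the bookkeeping you flag (that every non-trivial semi-adjacency or fuzziness in the trigraph presentation forces a non-trivial homogeneous pair of cliques in $G$, and the precise case split between line graphs of multigraphs and canonical interval $2$-joins) is exactly where the work lies and is not a mere formality; your outline does not yet carry it out.
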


By \cref{lem:no-clique-cutset}, a graph containing a clique cutset is not vertex-critical for \MPdot, so in particular not $\mathcal{G}_{ql}$-critical. Hence, the rest of this section is dedicated to showing the result for the graphs in the other three categories. First, we show that no line graph of any (multi)graph has \MPdot, by a direct application of Vizing's theorem, which we include for completeness. Then we show that any circular interval graph satisfies $\chi_{\ell} \leq \mcodeg(G)+3$, where $\chi_{\ell}$ is the list-chromatic number, which implies it does not have \MPdot. Last, we show that any graph admitting a canonical interval $2$-join is neither $\mathcal{G}_{cf}$-critical nor $\mathcal{G}_{ql}$-critical for \MPdot. Hence, no quasi-line graph $G$ is $\mathcal{G}_{ql}$-critical for \MPdot, which proves \cref{thm:mainquasiline}. 
 
\subsection{Line graphs of multigraphs}
\begin{lem}\label{lem:linegraph_multigraphs}
    If $G$ is the line graph of some multigraph $H$, then $\chi (G) \leq \mcodeg(G) + 3$.
\end{lem}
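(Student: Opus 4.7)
The plan is to translate to edge-coloring via the identity $\chi(L(H)) = \chi'(H)$ and apply Vizing's theorem for multigraphs, which gives $\chi'(H) \le \Delta(H) + \mu(H)$, where $\mu(H)$ denotes the maximum edge multiplicity. It then suffices to establish the matching codegree bound $\mcodeg(L(H)) \ge \Delta(H) + \mu(H) - 3$.

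The key tools are two codegree formulas in $L(H)$. Writing $\Delta = \Delta(H)$ and $\mu = \mu(H)$: two parallel copies of an edge $uv$ with $\mu(u,v) = m$ have codegree $\deg_H(u) + \deg_H(v) - m - 2$ in $L(H)$, and two distinct edges $e = uv, f = uw$ of $H$ with $v \ne w$ have codegree $\deg_H(u) - 2 + \mu(v,w)$. The strategy is to pick a max-degree vertex $u$ and then select a second edge so as to boost the codegree to at least $\Delta + \mu - 3$.

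The clean cases are: (i) $\mu = 1$, so $H$ is simple and any two edges at $u$ yield $\mcodeg(L(H)) \ge \Delta - 2 = \Delta + \mu - 3$; and (ii) there exist a max-degree vertex $u$ and two of its neighbors $v, w$ with $\mu(v, w) = \mu$, so that $uv, uw \in V(L(H))$ have codegree at least $\Delta - 2 + \mu = \Delta + \mu - 2$, which already beats the target. A third easy subcase is when we can choose an edge $uv$ of multiplicity $\mu$ with both endpoints of degree close to $\Delta$, since two parallel copies then give codegree $\deg_H(u) + \deg_H(v) - \mu - 2$, which is $\ge \Delta + \mu - 3$ whenever $\deg_H(u) + \deg_H(v) \ge \Delta + 2\mu - 1$.

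The main obstacle is the residual configuration: $\mu \ge 2$ and no max-degree vertex has two neighbors joined by a max-multiplicity edge, while the max-multiplicity edges sit between vertices of relatively small degree. The hope is that in this configuration Vizing's bound is loose, so $\chi'(H) < \Delta + \mu$ holds and the simple codegree bound $\mcodeg \ge \Delta - 2$ still closes the gap. Formalizing this requires either an appeal to Vizing's adjacency lemma for multigraphs, or a direct structural argument on where max-multiplicity edges can sit; in the worst case one may pass to a critical subgraph and argue that its max-multiplicity edges must be incident to max-degree vertices, returning to one of the clean cases above.
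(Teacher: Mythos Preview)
Your proof has a genuine gap. The inequality $\mcodeg(L(H)) \geq \Delta(H) + \mu(H) - 3$ is simply false in general, and your treatment of the ``residual configuration'' does not repair it. For a concrete counterexample, take $H$ to be a star $K_{1,n}$ with center $u$ and leaves $\ell_1,\dots,\ell_{n-1},a$, together with $m\ge 2$ parallel edges between $a$ and a new vertex $b$, where $n\ge m+1$. Then $\Delta(H)=n$ and $\mu(H)=m$, but a short check gives $\mcodeg(L(H))=n-2$, which is strictly less than $n+m-3$. Your own codegree formulas confirm this: at the max-degree vertex $u$ the best you can get is $\deg(u)-2+\mu(v,w)$ for two neighbors $v,w$ of $u$, and here $\mu(v,w)\le 1$ for all such pairs; the parallel bundle sits off to the side. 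Your suggested fixes (Vizing's adjacency lemma, passing to a critical subgraph) are only hopes, not arguments: you would need to prove that every critical multigraph $H$ with $\chi'(H)=k$ satisfies $\mcodeg(L(H))\ge k-3$, and nothing you wrote establishes this.

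The paper's proof avoids Vizing's multigraph bound entirely. It applies Vizing's theorem only to the underlying \emph{simple} graph $H'$, getting a $(\mcodeg(G)+3)$-coloring of the induced subgraph $G'=L(H')\subseteq G$, and then extends greedily to the remaining vertices of $G$ (those coming from loops and parallel edges). The key observation is that if $v\in V(G)$ represents a loop, then $N(v)$ is a clique, so $|N(v)|\le\omega(G)-1\le\mcodeg(G)+1$; and if $v,e$ represent parallel edges, then they are adjacent twins in $G$, so $|N(v)|=\codeg(v,e)+1\le\mcodeg(G)+1$. Either way the vertex has at most $\mcodeg(G)+1$ neighbors and can be colored greedily. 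This sidesteps the multiplicity parameter $\mu$ altogether.
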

\begin{proof}
    First, observe that if $H$ is a simple graph, then, by Vizing's theorem~\cite{vizing1964estimate}, it can be properly edge-colored with $\Delta (H) +1$ colors. Therefore, its line graph $G$ can be properly colored with $\Delta(H)+1=\omega (G)+1$ colors. Since $\omega(G)-2\leq \mcodeg(G)$ for all graphs $G$, we conclude that $\chi (G) \leq \omega (G) +1 \leq \mcodeg (G) +3$. 
    
    If $H$ is not a simple graph, then let $H'$ be the underlying simple graph of $H$ and let $G'=L(H')$, which is $(\mcodeg(G')+3)$-colorable. 
    Note that $G'$ is an induced subgraph of $G$, where $V(G)\setminus V(G')$ is the set of vertices representing the loops and parallel edges of $H$. 
    Therefore, $\mcodeg(G')\leq \mcodeg (G)$ and there exists $\varphi$, a $(\mcodeg (G) +3)$-coloring of $G'$. 
    
    We are going to prove that $|N(v)|\leq \mcodeg(G)+1$ for any $v\in V(G)\setminus V(G')$, which then implies that we can extend $\varphi$ to a $(\mcodeg (G) +3)$-coloring of $G$ without an additional color. 
    First, observe that edges sharing an endpoint in $H$ form a clique in $G$. Therefore, if $v$ represents a loop in $x\in V(H)$, its neighborhood is the clique consisting of the edges that contain $x$. Hence, $|N(v)|\leq \omega(G)-1\leq \mcodeg(G) +1$. Now, if $v,e$ represent parallel edges, then $v$ and $e$ are adjacent twins in $G$. Thus $|N(v)|=\codeg(v,e)+1\leq \mcodeg (G)+1$ also in this case.
\end{proof}

Note that $\chi_\ell(G)$ satisfies the same result as in \cref{lem:linegraph_multigraphs} by the same argument, conditioned on the truth of the (Weak) List Coloring Conjecture.

\subsection{Circular interval graphs}\label{sub:circular}

Let $L$ be a line, and let $\mathcal{I}$ be a set of intervals of $L$.
A \emph{linear interval graph} is a graph $G$ where $V(G)$ is a finite set of distinct points on $L$ and $u,v\in V(G)$ are adjacent if they are both contained in some interval of $\mathcal{I}$. 
A \emph{circular interval graph} is defined similarly, but we replace $L$ with a circle $C$ and we let $\mathcal{I}$ be a set of arcs of $C$. 
(Note that circular interval graphs are also sometimes called circular-arc graphs in the literature.)
We call $C$ the underlying circle of $G$ and when the situation is unambiguous we will call $C$ the \emph{cycle of $G$}.
For convenience, when the situation is unambiguous we will conflate the intervals of $I \in C$ with the sets of vertices of $G$ that are contained in $I$.
The \emph{size} of an interval will be the cardinality of the interval (as a set of vertices in $G$).
For any underlying cycle $C$ we will implicitly assume we have fixed an orientation of $C$ so that the directions \say{left} and \say{right} from a point on $C$ are well-defined.
Let $v \in V(G)$ and let the vertices of $G$ be ordered as $v = v_0, v_1, v_2, v_3, \dots, v_n, v_0$ when moving from $v_0$ rightwards along $C$ (where $n$ denotes the number of vertices in $G$).
Throughout this paper, we will assume without loss of generality that for every interval $I'$ such that it is contained in $I \in \mathcal{I}$  that $I' \in \mathcal{I}$, and that for each vertex $v$ in $G$ there is an interval in $\mathcal{I}$ containing only $v$.
The \emph{rightmost interval of $v_0$} is the largest interval in $\mathcal I$ of the form $\{v_0, v_1, \dots, v_i\}$ and we denote it by $I^R_v$.
Similarly, the \emph{leftmost interval of $v_0$} largest interval in $\mathcal I$ of the form $\{v_{n-i}, \ldots, v_{n-1},v_n,v_0\}$ and we denote it by $I^L_v$. 

By definition, the neighbors of $v$ in $G$ lie in $I^R_v \cup I^L_v$.
As every set of vertices lying in an interval of $\mathcal{I}$ forms a clique, every interval contains at most $\mcodeg(G) +2$ vertices of $G$. In particular, $|I^L_v|, |I^R_v| \leq \mcodeg(G)+2$. In other words, the left and right neighborhoods of $v$ both form a clique of at most $\mcodeg(G)+1$ vertices. 


We prove that the negation of \MP holds in a stronger form, in terms of the list chromatic number. 

\begin{lem}\label{lem:circular_interval_graphs}
    If $G$ is a circular interval graph, then $\chi_\ell(G)\leq \mcodeg(G)+3$. 
\end{lem}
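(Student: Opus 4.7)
My plan is to prove the lemma by a case analysis based on whether $G$ admits a \emph{gap}, that is, a pair of cyclically consecutive vertices that are non-adjacent.

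If there is a gap between $v_i$ and $v_{i+1}$, then no arc of $\mathcal I$ contains both of these vertices, and re-indexing the cyclic sequence to start at $v_{i+1}$ and end at $v_i$ realises $G$ as a linear interval graph. Such graphs are chordal, so greedy list-coloring along a perfect elimination ordering gives $\chi_\ell(G)=\omega(G)\leq \mcodeg(G)+2\leq \mcodeg(G)+3$, and we are done in this sub-case.

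Otherwise every pair of cyclically consecutive vertices is adjacent, so $G$ contains the Hamiltonian cycle $v_0v_1\cdots v_{n-1}v_0$. I would pick $v_0$ so that $|I^L_{v_0}|$ is maximum among all vertices, set $L:=I^L_{v_0}$, and precolor $L$ with distinct colors drawn from the lists; since $|L|\leq \mcodeg(G)+2<\mcodeg(G)+3$, this is doable by Hall's theorem applied to the clique $L$. The key structural observation is that $G-L$ is a linear interval graph on the remaining segment $v_1,v_2,\ldots,v_{n-|L|}$, because every arc of $\mathcal I$ either avoids $v_0$ (and is already a linear interval in this segment) or contains $v_0$ (in which case it sits inside $N[v_0] = I^L_{v_0}\cup I^R_{v_0}$, so upon deleting $L$ it restricts to a linear interval). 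This allows me to extend the precoloring by greedy list-coloring $G-L$ along a perfect elimination ordering of this linear interval graph, drawing each color from the list of the current vertex minus the colors already used on its precolored neighbors in $L$.

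The main obstacle I expect to encounter is in this extension step: guaranteeing that each vertex $v$ of $G-L$ has a free color available from its restricted list. At a PEO step the forbidden colors for $v$ come from its PEO-processed right-neighbors (a sub-clique of $I^R_v$) together with its precolored neighbors in $L$ (a sub-clique of $L$, splitting into a left-wrap portion inside $I^L_v\cap L$ and a right-reach portion inside $I^R_v\cap L$). The maximality of $|I^L_{v_0}|$ is what guarantees that the left-wrap of any vertex stays entirely inside $L$, so no uncolored out-of-$L$ left-neighbor is overlooked; but one still has to bound the total number of distinct forbidden colors by $\mcodeg(G)+2$. I anticipate this final bookkeeping—possibly requiring a careful choice of the precoloring on $L$ to coordinate its colors with those forced on $I^R_v$—to be the main technical ingredient of the argument.
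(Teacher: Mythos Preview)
Your proposal leaves the decisive step---the final bookkeeping---explicitly unfinished, and in fact the scheme as you describe it (precolor all of $L=I^L_{v_0}$, then greedily list-color $G-L$ along a right-to-left PEO) can fail. The problem is the very last vertex $v_1$: by the time you reach it, every one of its neighbours is already coloured, so you would need $\deg(v_1)\le\mcodeg(G)+2$. This is not true in general. For a concrete instance, take $n=100$ with maximal arcs $L=\{v_{91},\dots,v_0\}$, $\{v_{98},v_{99},v_0,v_1\}$, $\{v_1,\dots,v_{10}\}$, and length-$2$ arcs elsewhere. Here $\deg(v_1)=12$ while one checks $\mcodeg(G)=9$, so with $12$-element lists the adversary can force the three $L$-neighbours $v_{98},v_{99},v_0$ and the nine clique-neighbours $v_2,\dots,v_{10}$ of $v_1$ to use twelve pairwise distinct colours, exhausting $v_1$'s list. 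No ``careful precolouring of $L$'' rescues this in the list setting, because the colours that will later appear on $I^R_{v_1}$ are dictated by adversarial lists you have not yet seen when you fix the colours on $L$.

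The paper's proof avoids this by a simpler ordering that makes the codegree bound go through directly. Instead of precolouring all of $L$, it colours only the single pivot $v=v_0$ first and then proceeds greedily rightwards around the circle, finishing with the vertices of $I^L_{v}\setminus\{v\}$. For $w\notin I^L_{v}$ only left-neighbours are coloured, at most $\omega(G)-1\le\mcodeg(G)+1$ of them. For $w\in I^L_{v}$ the key observation is that every already-coloured right-neighbour $u$ of $w$ lies strictly to the right of $v$, so the arc witnessing $uw\in E(G)$ must contain $v$; hence all such $u$ lie in $N[v]\cap N[w]$. Combined with $|I^L_w|\le|I^L_{v}|$ (maximality) and $I^L_{v}\subseteq N[v]\cap N[w]$ (since $w\in I^L_v$ and $I^L_v$ is a clique), this bounds the already-coloured neighbours of $w$ by $|N[v]\cap N[w]|=\codeg(v,w)+2\le\mcodeg(G)+2$. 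The single pivot $v$ is exactly what enables this containment; precolouring the whole clique $L$ destroys it, because the right-neighbours of a late-processed vertex need not be neighbours of any fixed vertex of $L$.
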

\begin{proof}
For each  $u \in V(G)$, let $L(u)$ be the list of $\mcodeg (G) +3$ colors associated to $u$.

Let $v$ be the rightmost vertex of a longest interval $I$, so in particular $I_v^L=I$ and $v$ is the rightmost vertex in $I_v^L$. Let us color the vertices of $G$ as they appear on the cycle starting from $v$ and moving to the right. In particular, we end with the vertices in $I_v^L\setminus\{v\}$. 

For every vertex $w$, we pick $\phi(w)$ to be the smallest color of $L(w)$ which does not appear in their already colored neighborhood. We claim that such a color always exists.

Suppose first that $w \not \in I^L_v$. By our choice of $v$, the only neighbors of $w$ that were colored before $w$ are its left neighbors, i.e.~in $I^L_w$. 
However, as $I^L_w$ is an interval, the vertices in it form a clique of size at most $\mcodeg(G) +2$. Hence, at most $\mcodeg(G)+1$ neighbors of $w$ were colored before $w$, and there is an available color in the list $L(w)$. 

Suppose now that $w\in I^L_v$. In this case, there are neighbors to the right of $w$ that were colored before $w$. Let $U_w=\left\{u\in I_w^R-I_w^L \mid u \text{ is already colored}, u \neq v\right\}$ be the right neighbors of $w$ which were colored before $w$, excluding $v$. So if $u\in U_w$, then $u$ lies to the right of $v$. Since every interval that extends to the right of $w$ and that contains a vertex to the right of $v$, also contains $v$, $u \in N[v] \cap N[w]$. Hence, since $v$ is the rightmost vertex in $I_v^L$, we have $u \in I_v^R - I_v^L$.
 Therefore, the number of neighbors colored before $w$ is a subset of $(I_w^L-\{w\}) \cup (U_w+\{v\})$. By maximality of $I_v^L$, we have 
\[|(I^L_w-\{w\})\cup (U_w+\{v\})| \leq |I^L_w|+|U_w|\leq |I^L_v|+|U_w|\]
Since $I^L_v \cap U_w = \emptyset$ and all vertices in $I_v^L \cup U_w$ lie in the closed neighborhoods of $v$ and $w$, we obtain
\[|I^L_v|+|U_w| = |I^L_v \cup U_w|\leq |N[v] \cap N[w]| = \codeg(v,w)+2 \leq \mcodeg(G)+2.\]
Hence, there are at most $\mcodeg(G)+2$ neighbors of $w$ colored before $w$, so there is at least one available color in $L(w)$. 
\end{proof}

The proof above shows that the list chromatic number of a circular interval graph $G$ satisfies $\chi_{\ell}(G) \leq \mcodeg(G)+3$. This bound is not tight. In fact, we can prove that all circular interval graphs $G$ satisfy $\chi_\ell(G) \leq \mcodeg(G)+2$, a bound that is met with equality for cliques. As this result is not necessary for our main theorems and its proof is more technical, 
we include it as \Cref{Section: Appendix better bound}.

\subsection{Canonical interval 2-joins}\label{ssec:canonical_interval_2join}

    Let $G$ be a linear interval graph, and let cliques $A$ and $B$ be the $|A|$ leftmost and $|B|$ rightmost vertices of $G$ in some linear interval representation of $G$. Then we call $(G, A, B)$ a \emph{linear interval strip}.
    If $(G_2 , A , B )$ is a linear interval strip where $A$ and $B$ are disjoint and such that $G_2$ is not a clique, then the $2$-join $((X_1 , Y_1 ), (A , B))$ between $G_2$ and some other graph $G_1$ with cliques $X_1$ and $Y_1$ is called a \emph{canonical interval $2$-join}~\cite{king-reed-claw-free}.


\begin{lem}\label{lem:canonical interval 2 join}
    If $G$ admits a canonical interval $2$-join $((X_1,Y_1),(X_2,Y_2))$ between $G_1$ and $G_2$, then $G$ is not vertex-critical for \MPdot. 
\end{lem}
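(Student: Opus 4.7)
The plan is to show that if $G$ is vertex-critical for \MPdot, then a proper $(\mcodeg(G)+3)$-coloring of $G$ can nevertheless be constructed, contradicting \MPdot. Set $k := \mcodeg(G)+3$. First I would handle the degenerate case $V_1 = \emptyset$: then $G = G_2$ is a linear interval graph, hence a circular interval graph, so \cref{lem:circular_interval_graphs} gives $\chi(G) \le \chi_\ell(G) \le k$, a direct contradiction. Henceforth $V_1 \ne \emptyset$, so $G_1 := G[V_1]$ is a proper induced subgraph of $G$; vertex-criticality provides a proper $k$-coloring $\phi_1$ of $G_1$.

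Because $X_1 \cup X_2$ and $Y_1 \cup Y_2$ are cliques of $G$ of size at most $\omega(G) \le \mcodeg(G)+2 = k-1$, extending $\phi_1$ to a proper $k$-coloring of $G$ reduces to producing a proper list-coloring of $G_2$ with $L(v) = [k] \setminus \phi_1(X_1)$ on $X_2$, $L(v) = [k] \setminus \phi_1(Y_1)$ on $Y_2$, and $L(v) = [k]$ elsewhere. The list sizes then satisfy $|L(v)| \ge |X_2|+1$ on $X_2$, $|L(v)| \ge |Y_2|+1$ on $Y_2$, and $|L(v)| = k$ on the middle vertices. To produce the list-coloring I would use the linear interval representation $v_1,\ldots,v_n$ of $G_2$ (with $X_2$ the leftmost block and $Y_2$ the rightmost) and process $X_2$ left-to-right, then the middle vertices left-to-right, then $Y_2$ right-to-left. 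Greedy list-coloring succeeds in the first two phases because the already-colored left-neighbors of any vertex $v$ form a subclique of $I_v^L$ of size at most $\omega(G_2)-1 \le k-2$, which is strictly less than $|L(v)|$.

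The main obstacle is the final $Y_2$ phase. For $y \in Y_2$ one has $Y_2 \subseteq I_y^L$ (since $Y_2$ is a clique of rightmost vertices), so the already-colored left-neighbors of $y$ outside $Y_2$ form a subset of $I_y^L \setminus Y_2$ of size at most $\omega(G_2) - |Y_2|$; this may exceed $|L(y)| - 1 = k - |Y_1| - 1$ when $|Y_2|$ is small and $|Y_1|$ is large. To overcome this I plan to exploit the freedom in the choice of $\phi_1$ among all proper $k$-colorings of $G_1$: by permuting the palette one arranges $\phi_1(Y_1)$ to occupy colors disjoint from those greedily assigned to the middle clique $I_y^L \setminus Y_2$, which (since greedy-with-smallest-available concentrates the middle on the lowest-index colors) leaves free colors in $L(y)$. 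Should no palette-shift suffice, one may appeal to \cref{lem:thickening-and-$2$-join} to reduce to the case where $G_2$ is not a strict thickening (no twin vertices, no non-trivial homogeneous pair), and then argue that the tight configuration would force a pair of twins or a trivial homogeneous pair of cliques within $Y_2 \cup (I_y^L \setminus Y_2)$, contradicting \cref{lem:nodominatingvertices} or \cref{lem:trivial-homog-pair-of-cliques}. Either way, one obtains a proper $k$-coloring of $G$, contradicting that $G$ satisfies \MPdot.
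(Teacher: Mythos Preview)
Your overall strategy---assume vertex-criticality, take a $k$-coloring of $G_1$, then extend to $G_2$ by a greedy sweep along the linear order---matches the paper's. The gap is entirely in the $Y_2$ phase, where neither of your proposed fixes succeeds.

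Fix (a) is pointed in the wrong direction. If you arrange $\phi_1(Y_1)$ to be \emph{disjoint} from the colors used on $I_b^L\setminus Y_2$, you are \emph{maximizing} the set of colors forbidden at $b$, not minimizing it; what you would actually need is enough \emph{overlap}, and no palette permutation guarantees that. Concretely, with $|Y_2|=1$ the vertex $b$ sees $|I_b^L|-1$ colored $G_2$-neighbors (a clique, hence all distinct colors) and $|Y_1|$ forbidden colors from $G_1$; nothing in your setup prevents $|I_b^L|-1+|Y_1|\ge k$, since the only available inequalities are $|I_b^L|\le\omega(G)\le k-1$ and $|Y_1|\le\omega(G)-1\le k-2$, which together only give $2k-4$.

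Fix (b) also falls short. Invoking \cref{lem:nodominatingvertices} does let you reduce to $|X_2|=|Y_2|=1$ (the extremal vertex of $X_2$ or $Y_2$ dominates the others), but with $|Y_2|=1$ the same counting problem for $b$ persists, and the ``tight configuration'' does not force twins or a homogeneous pair inside $I_b^L$: the left-neighbor $c$ of $b$ can have $I_c^L\not\subseteq I_b^L$, so $b$ need not dominate $c$, and nothing else in $I_b^L$ is forced to be homogeneous.

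The missing idea is very simple: colour the \emph{leftmost} vertex $b$ of $Y_2$ \emph{first}, before sweeping the rest of $G_2$ left-to-right. At that moment $b$'s only coloured neighbours are $Y_1$, a clique of size at most $\omega(G)-1\le k-2$. Every later vertex in $X_2$ or in the middle gains at most one extra pre-coloured neighbour (namely $b$), so their counts stay at most $\omega(G)\le k-1$. Finally, for any $u\in Y_2\setminus\{b\}$ one has $N[u]\subseteq N[b]$ (since $b$ is leftmost in $Y_2$), hence $\deg(u)\le\codeg(u,b)+1\le\mcodeg(G)+1=k-2$, and the greedy step succeeds regardless of order. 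This single reordering replaces both of your speculative fixes.
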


\begin{proof}
    Suppose that $G$ is vertex-critical for \MPdot. Since $G_1, G_2$ are induced subgraphs of $G$, there exists a $(\mcodeg(G_1)+3)$-coloring $\phi$ of $G_1$. Since $\mcodeg(G_1)\leq \mcodeg(G)$, $\varphi$ is a $(\mcodeg(G)+3)$-coloring of $G_1$. We will now extend $\varphi$ to a $(\mcodeg(G)+3)$-coloring of $G$. 

    Let $L$ be the underlying line and $\mathcal{I}$ the set of intervals of $L$ in some linear interval representation of $G_2$. Then $X_2$ corresponds to the leftmost part of the leftmost interval and $Y_2$ to the rightmost part of the rightmost interval. Let $a$ be the rightmost vertex of $X_2$ on $L$ and symmetrically, let $b$ be the leftmost vertex of $Y_2$ on $L$.
    
   \begin{figure}[ht!]
        \centering
        \begin{tikzpicture}[scale=0.6]
\tikzstyle{vertex}=[rectangle, draw, fill=black,
                        inner sep=0pt, minimum width=0pt, minimum height=5pt]
\tikzstyle{vertex2}=[circle, draw, fill=black,
                        inner sep=0pt, minimum width=4pt]

    \node[] (G1) at (-2,5) { \large $G_1$ };
    \node[] (G2) at (-2,1) { \large$G_2$ };

    \node[vertex] (a1) at (0,1) {};
    \node[vertex] (a2) at (3.75,1) {};
    \draw (a1)--(a2);
    \node[vertex2] (a) [label=below:$a$] at (3,1){};

    \node[vertex] (d1) at (4.25,0.5) {};
    \node[vertex] (d2) at (9.5,0.5) {};
    \draw (d1)--(d2);

    \node[vertex] (b1) at (13.5,1) {};
    \node[vertex] (b2) at (21,1) {};
    \draw (b1)--(b2);
    \node[vertex2] (b) [label=below:$b$] at (15,1){};

    \node[vertex] (c1) at (12.5,1.5) {};
    \node[vertex] (c2) at (16,1.5) {};
    \draw (c1)--(c2);

    \node[] (dots) at (11,1) { $\ldots$ };

    \node[] (X2) at (1.5, -0.5) {\large $X_2$};
    \node[] (Y2) at (18, -0.5) {\large $Y_2$};

    \draw[color=teal, double] (-0.5,0) rectangle (3.5,2);
    \draw[color=teal, double] (14.5,0) rectangle (21.5,2);
    \draw[] (4,0) rectangle (14,2);

    \node[] (X1) at (2.5, 6.5) {\large $X_1$};
    \node[] (Y1) at (18.5, 6.5) {\large $Y_1$};

    \draw[color=teal, double] (-0.5,4) rectangle (5,6);
    \draw[color=teal, double] (16,4) rectangle (21.5,6);
    \draw[] (5.5,4) rectangle (15.5,6);



    \foreach \x in {0,0.5,1,1.5,2,2.5,3}{
        \draw[color=teal] (\x, 2) to (\x*1.5,4);
        }

    \foreach \x in {0,0.5,1,1.5,2,2.5,3, 3.5, 4}{
        \draw[color=teal] (15+\x*1.5, 2) to (16.5+\x*1.13,4);
        }

\end{tikzpicture}
        \caption{Canonical interval $2$-join between $G_1$ and $G_2$.}
    \end{figure}
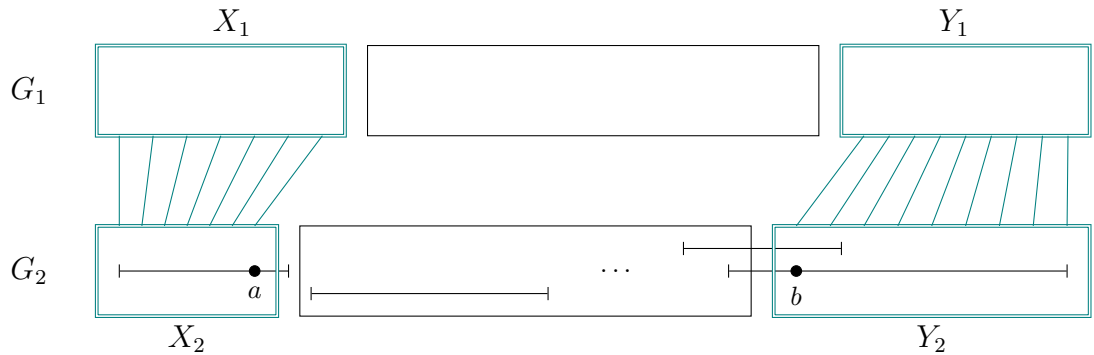
    
    We will greedily color $G_2$ by assigning the smallest available color to the vertices of $G_2$. First we color the vertex $b$, then the other vertices of $G_2$ as they appear in $L$ from left to right. We prove for all $v \in V(G_2)$ that at most $\mcodeg(G) +2$ neighbors of $v$ were colored before $v$ itself, so at least one color remains available for $v$.
    
    First color $b$. Its colored neighborhood is the set $Y_1$. Since $b \cup Y_1$ is a clique, at most $\omega(G)-1 \leq \mcodeg(G)+1$ neighbors were colored before $b$.
    Since $X_2$ is the leftmost interval and $X_2$ and $Y_2$ are disjoint, every colored neighbor of a vertex $v \in X_2$  either lies in the clique $X_1\cup X_2$ or is the vertex $b$. Hence at most $\omega(G)-1+1\leq \mcodeg(G)+2$ neighbors of $v$ were colored before $v$ itself.
    For any vertex $v \in G_2\setminus(X_2\cup Y_2)$, $N(v)\subseteq G_2$. So its colored neighborhood is a subset of $I^L_u\cup \{b\}$. Thus, at most $\omega(G_2)-1+1 \leq \mcodeg(G_2)+2\leq \mcodeg (G)+2$ neighbors of $v$ were colored before $v$. 
    
    Last, for any vertex $u \in Y_2$, we have $N[u] \subseteq N[b]$. Indeed, since  $b$ is the leftmost vertex in $Y_2$, we have $N_{G_2}[u] \subseteq N_{G_2}[b]$ and the only neighbors of $u$ in $G_1$ belong to the clique $Y_1$. Thus $|N(u)|\leq \codeg(u,b)+1\leq \mcodeg(G)+1$. 
    Therefore, we can extend $\phi$ to a $(\mcodeg(G)+3)$-coloring of $G$, and $G$ is not vertex-critical for \MPdot.
\end{proof}

\section{Claw-free graphs}\label{sec:cf}
We continue to use \cref{thm:structure-king-reed} as a road map in our proof of \cref{thm:maincf}.
We need to show that no claw-free graph is $\mathcal{G}_{cf}$-critical for \MPdot.
Just as for quasi-line graphs, we may assume from the results in \cref{sec:defuzz} that any $\mathcal{G}_{cf}$-critical graph $G$ for \MP has no clique cutset and is not a strict thickening.
Moreover, by \cref{thm:mainquasiline} we may assume that $G$ is not a quasi-line graph. It remains to show that graphs in a few specific families of graphs are not $\mathcal{G}_{cf}$-critical for \MPdot, which we do in the remainder of the section. 

The first family to be addressed is the claw-free graphs with independence number at most $2$. This is not on our list but it is a base case for antiprismatic graphs that we handle then. 
Afterwards, we deal with the three-cliqued graphs. For these graphs we will use similar techniques as for the antiprismatic graphs, however to handle the base case for these graphs we need to overcome some technicalities which lengthen the proof.  Afterwards, we handle all types of  $2$-joins, and we finish up with the very small class of icosahedral graphs. Once we have shown that none of these classes contain a $\mathcal{G}_{cf}$-critical graph for \MPdot, we can conclude that \cref{thm:maincf} holds.

\subsection{Graphs with no triad}
\label{ssec:cf_alpha2}

Let $G$ be a graph such that $\alpha(G)\leq 2$. That is, $G$ is a graph with no triad, and since a claw contains a triad, $G$ is claw-free. 
In this subsection, we prove that such $G$ satisfies $\chi(G) \leq \mcodeg(G)+2$. 
Note that $\alpha(G)\leq 2$ is equivalent to the complement $\overline{G}$ being triangle-free. 
More precisely, we will show that $\overline{\chi}(\overline{G}) \leq \macodeg(\overline{G})+2$.
For readability, let us for the remainder of this subsection write $H$ for $\overline{G}$. So we will prove that any triangle-free graph $H$ satisfies
\begin{equation}\label{eq:alpha2generalcompl}
    \overline{\chi}(H) \leq \macodeg(H)+2.
\end{equation}

Any clique covering of a triangle-free graph $H$ consists only of cliques of size $1$ and $2$. Therefore, any clique covering of $H$ is a (possibly empty) matching $M$ together with the set $X$ of vertices unmatched by $M$. 
Hence,
\begin{equation*}
    \overline{\chi}(H) = m(H)+( |V(H)|-2m(H)) =    
    |V(H)|-m(H).
\end{equation*} 
\noindent
Substituting this into \cref{eq:alpha2generalcompl}, we see that it suffices to show that any triangle-free graph $H$ satisfies
\begin{equation}\label{eq:alpha2matching}
    |V(H)| \leq  m(H) +\macodeg(H)+2.
\end{equation}

We start by proving a lower bound on the maximum anti-codegree of a triangle-free graph in terms of the number of vertices.

\begin{lem}\label{lem: bounding anti codeg} If $H$ is a triangle-free graph, then $\macodeg(H) \geq \frac{|V(H)|}{2}- 2$.
\end{lem}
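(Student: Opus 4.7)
The plan is to split into two cases according to the maximum degree $\Delta = \Delta(H)$, with a matching argument handling the low-degree case.

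In the first case, $\Delta \geq \lceil n/2 \rceil$, I would pick a vertex $v$ achieving the maximum. Triangle-freeness forces $N(v)$ to be independent, so any two $u_1, u_2 \in N(v)$ are non-adjacent. Triangle-freeness also forces $N(u_i) \cap N(v) = \emptyset$ (a common neighbor together with $v$ would form a triangle), so $N(u_i) \subseteq \{v\} \cup (V(H) \setminus N[v])$, a set of size $n - \Delta$. Therefore $|N(u_1) \cup N(u_2)| \leq n - \Delta$ and
\[
\acodeg(u_1,u_2) \;=\; n - 2 - |N(u_1) \cup N(u_2)| \;\geq\; \Delta - 2 \;\geq\; n/2 - 2.
\]

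In the second case, $\Delta < n/2$, the key tool is a maximum matching $M$ of size $m$, with $X$ the set of unmatched vertices (which is independent since $M$ is maximum). If $|X| \geq 2$, I would pick $u,v \in X$ and show that $|N(u) \cup N(v)| \leq m$: each matching edge $\{a,b\}$ contributes at most one vertex to $N(u) \cup N(v)$, because both endpoints lying in $N(u)$ (or both in $N(v)$) would produce a triangle $u,a,b$ (respectively $v,a,b$), contradicting triangle-freeness; and $a \in N(u)$ with $b \in N(v)$ would yield an augmenting path $u\text{-}a\text{-}b\text{-}v$, contradicting the maximality of $M$. Since $m \leq n/2$, this gives $\acodeg(u,v) \geq n - 2 - m \geq n/2 - 2$.

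The main obstacle is the remaining subcase $|X| \leq 1$ combined with $\Delta < n/2$, where the matching is (near-)perfect and neither previous trick directly applies. My plan is to handle it by a further split on the minimum degree $\delta$: if $\delta \leq 2$, then a min-degree vertex $v$ together with any neighbor $w$ already satisfies $\acodeg(v,w) = n - d(v) - d(w) \geq n - \delta - \Delta > n/2 - 2$ by triangle-freeness. When $\delta \geq 3$, the degree window $[3, n/2)$ is narrow and structural results on triangle-free graphs become relevant: for $\delta > 2n/5$ the Andr\'asfai--Erd\H{o}s--S\'os theorem forces $H$ to be bipartite, in which case picking two vertices in the larger color class immediately gives $\acodeg \geq n/2 - 2$; for the residual regime $\delta \in [3, 2n/5]$ I would return to the neighborhood of a max-degree vertex $v$ and use a refined counting argument over pairs $\{u_1,u_2\} \subseteq N(v)$, exploiting that every such pair's common non-neighbors contain $\{v\}$ together with each $b \in V \setminus N[v]$ that misses both $N(u_1)$ and $N(u_2)$, to squeeze out the last $n/2 - \Delta$ contributions needed. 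This final regime is the most delicate step.
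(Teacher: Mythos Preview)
Your Cases~1 and~2a are correct and pleasant, and in Case~2b the subcases $\delta\le 2$ and $\delta>2n/5$ (via Andr\'asfai--Erd\H{o}s--S\'os and the bipartite observation) are fine as well. The genuine gap is the residual regime $3\le\delta\le 2n/5$ with $\Delta<n/2$ and a (near\nobreakdash-)perfect matching. There you offer no argument, only a plan, and the plan already contains a slip: for $u_1,u_2\in N(v)$ the vertex $v$ is a common \emph{neighbor} of $u_1,u_2$, not a common non-neighbor, so the set of common non-neighbors you describe is $N(v)\setminus\{u_1,u_2\}$ together with the appropriate vertices of $V\setminus N[v]$, and its guaranteed size is only $\Delta-2$, which is strictly below $n/2-2$ here. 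Squeezing out the missing $n/2-\Delta$ contributions requires controlling how $N(u_1)$ and $N(u_2)$ overlap inside $V\setminus N[v]$, and you give no mechanism for that. The regime is not vacuous: the Petersen graph has $n=10$, $\delta=\Delta=3$, a perfect matching, and $\delta=3\le 4=2n/5$, so it lands exactly in your unproved case.

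The paper's proof is entirely different and avoids all of this case analysis. It proceeds by contradiction: assuming $\acodeg(u,v)\le n/2-5/2$ for every pair forces $|N(u)\cup N(v)|\ge n/2+1/2$ for non-adjacent $u,v$ and $\ge n/2+5/2$ for adjacent $u,v$. A short pigeonhole argument over two disjoint edges then produces a $4$-cycle $v_1v_2v_3v_4$. Finally, triangle-freeness gives $N(v_i)\cap N(v_{i+1})=\emptyset$, so $N(v_1)\cup N(v_3)$ and $N(v_2)\cup N(v_4)$ are disjoint, whence
\[
n \;\ge\; |N(v_1)\cup N(v_3)|+|N(v_2)\cup N(v_4)| \;\ge\; \Bigl(\tfrac{n}{2}+\tfrac12\Bigr)+\Bigl(\tfrac{n}{2}+\tfrac12\Bigr) \;=\; n+1,
\]
a contradiction. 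This two-step ``find a $C_4$, then exploit it'' argument is short and uniform; it is also what cleanly closes the hole in your approach, since in your residual regime $\Delta<n/2$ and $\delta\ge 3$ together with the contradiction hypothesis would already produce the $C_4$ needed.
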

\begin{proof} Let $n=|V(H)|$. The statement is trivial if $n \leq 4$.
    We proceed by contradiction. Suppose that every pair of vertices $v,w \in V(H)$ satisfies $\acodeg(v,w) \leq \frac{n}{2}-\frac{5}{2}$. Letting $vw\in E(H)$, then 
     \[|N(v) \cup N(w)| \geq\left\{\begin{array}{ll}
    (n-2)-(\frac{n}{2}-\frac{5}{2})=\frac{n}{2}+\frac{1}{2} & \text{if $vw\notin E(H)$} \\
    n-(\frac{n}{2}-\frac{5}{2})=\frac{n}{2}+\frac{5}{2}& \text{if $vw\in E(H)$}\\
    \end{array}.\right.\] 

    Let $x$ and $y$ be two other vertices. Then $|N(x) \cup N(y)| \geq \lceil \frac{n}{2}+\frac{1}{2} \rceil \geq 3$, so there exist two vertices $s$ and $t$, where $s \in \{x,y\}$ and $t \in [N(x) \cup N(y)]-\{v,w\}$ such that $s$ is adjacent to $t$. Then $|N(v) \cup N(w)|+|N(s) \cup N(t)| \geq n+5$. So five vertices of $G$ appear in both of $N(v) \cup N(w)$ and $N(s) \cup N(t)$, where $s,t,v$ and $w$ can also appear (multiple times) in these sets.  Since each of these five vertices must be adjacent to $u$ or $w$ and be adjacent $s$ or $t$, by the pigeonhole principle some two vertices must have two common neighbors in $v,w,s,t$.
    Hence, $H$ contains $C_4$ as a subgraph.
       
    Let $v_1,v_2,v_3,v_4$ be a $C_4$ in $H$. Then, since $H$ is triangle-free and $v_iv_{i+1} \in E(H)$ for all $i=1,\ldots, 4$ (where $v_5=v_1$), we have that $N(v_i) \cap N(v_{i+1}) = \emptyset$. We arrive at a contradiction with
    \begin{align*} n &\geq |N(v_1)\cup N(v_2) \cup N(v_3) \cup N(v_4)|= |N(v_1) \cup N(v_3)| + |N(v_2)\cup N(v_4)|\\ &\geq \frac{n}{2}+\frac{1}{2} + \frac{n}{2}+\frac{1}{2} = n+1. \qedhere\end{align*}
\end{proof}

Observe that $H= K_{\frac{n}{2}, \frac{n}{2}}$ satisfies $\macodeg(H) = \frac{n}{2}- 2$ so the bound proven in \cref{lem: bounding anti codeg} is tight.

\begin{lem}\label{lem: no perfect matching}
    If $H$ is a triangle-free graph such that $2m(H) \leq |V(H)|-2$, then \[|V(H)| \leq m(H)+\macodeg(H)+2.\]
\end{lem}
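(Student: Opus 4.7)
The plan is to exhibit two specific vertices whose anti-codegree certifies the bound. Let $M$ be a maximum matching of $H$ and set $n = |V(H)|$, $m = m(H)$. The hypothesis $2m \le n - 2$ guarantees that the set $U$ of vertices left uncovered by $M$ has size at least two. I would pick any two vertices $u, v \in U$ and show that $\acodeg(u, v) \ge n - m - 2$, which immediately gives $n \le m + \macodeg(H) + 2$.

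Since $u, v \in U$, maximality of $M$ forces $u$ and $v$ to be non-adjacent (otherwise $M \cup \{uv\}$ is a larger matching) and forces every neighbor of $u$ or $v$ to be covered by $M$ (otherwise we could again extend $M$). Non-adjacency of $u$ and $v$ means $\{u, v\}$ is disjoint from $N(u) \cup N(v)$, so $\acodeg(u, v) = n - 2 - |N(u) \cup N(v)|$. Since $N(u) \cup N(v) \subseteq V(M)$, it therefore suffices to prove that each edge $xy \in M$ contributes at most one endpoint to $N(u) \cup N(v)$; this would give $|N(u) \cup N(v)| \le m$ and conclude the proof.

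The key step, in which triangle-freeness and matching-maximality finally meet, rules out the three bad configurations for a fixed edge $xy \in M$. Having both $x, y$ adjacent to $u$ (or both to $v$) would create a triangle $uxy$ (or $vxy$), forbidden by triangle-freeness. Having $x \in N(u)$ and $y \in N(v)$ (or the symmetric case) would produce an $M$-augmenting path $u - x - y - v$ of length three, contradicting the maximality of $M$. Thus at most one endpoint of $xy$ lies in $N(u) \cup N(v)$, yielding $|N(u) \cup N(v)| \le m$ and hence $\acodeg(u, v) \ge n - m - 2$, as required. I do not anticipate real obstacles here: the hypothesis $2m \le n - 2$ is precisely what is needed to produce two unmatched vertices, and after that the argument is a short interplay of maximality and triangle-freeness, in the spirit of Berge's augmenting-path lemma.
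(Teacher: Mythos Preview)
Your proposal is correct and follows essentially the same approach as the paper: pick two vertices unmatched by a maximum matching and argue, via triangle-freeness and the augmenting-path contradiction, that each matching edge contributes at most one endpoint to $N(u)\cup N(v)$. The only cosmetic difference is that the paper phrases the final count as ``every vertex is counted at least once in $m(H)+\acodeg(v,w)+2$'' whereas you equivalently bound $|N(u)\cup N(v)|\le m$ directly.
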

\begin{proof}
    Let $M$ be a maximum matching in $H$ and let $X$ be the set of vertices unmatched by $M$. Then $X$ is an independent set and  $|M|=m(H)$. Moreover, as $2m(H) \leq |V(H)|-2$, $X$ contains at least two vertices, say $v$ and $w$.
    
    We claim that at least one endpoint of any edge of $M$ is a non-neighbor of both $v$ and $w$. Indeed, suppose for a contradiction that there exists $ab$, an edge of $M$ such that $a,b\in N(v)\cup N(w)$. As $H$ is triangle-free, $a$ and $b$ are adjacent to distinct vertices in $\{v,w\}$. Then there is a matching of size $2$ between $\{a,b\}$ and $\{v,w\}$ and replacing $ab$ by this matching in $M$ leads to a larger matching in $H$, a contradiction.

    Hence, $\acodeg(v,w)$ takes into account at least one endpoint of every edge of $M$ and every vertex of the independent set $X$, except for $v$ and $w$.
    The other endpoint of an edge in $M$ is accounted for by $m(H)$, thus every vertex is counted (at least once) in $m(H) + \acodeg(v,w)+2$ as desired.
\end{proof}

\begin{thm}\label{thm:alpha2_chiBounded}
    If $G$ is a graph such that $\alpha(G)\le2$, then $\chi(G) \leq \mcodeg(G)+2$.
    Equivalently, if $H$ is triangle-free, then $\overline{\chi}(H) \leq \macodeg(H)+2$.
\end{thm}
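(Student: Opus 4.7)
The plan is to combine the two preparatory lemmas already in hand (the lower bound $\macodeg(H) \ge |V(H)|/2 - 2$ and the matching-based bound for graphs with at least two unmatched vertices) via a short case analysis on the size of a maximum matching in $H = \overline{G}$. The reformulation work has already been done just before the statement: it suffices to show that every triangle-free graph $H$ satisfies $|V(H)| \le m(H) + \macodeg(H) + 2$, since rewriting $\overline{\chi}(H) = |V(H)| - m(H)$ turns the target inequality into exactly this.

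The case split is on whether $2m(H) \le |V(H)| - 2$ or $2m(H) \ge |V(H)| - 1$. In the first case, the maximum matching leaves at least two vertices unmatched, so \cref{lem: no perfect matching} applies immediately and yields the desired bound. In the second case, $H$ admits a perfect or near-perfect matching, so $m(H) \ge (|V(H)|-1)/2$. Combining this with \cref{lem: bounding anti codeg} gives
\[
m(H) + \macodeg(H) + 2 \;\ge\; \frac{|V(H)|-1}{2} + \frac{|V(H)|}{2} - 2 + 2 \;=\; |V(H)| - \tfrac{1}{2}.
\]
Since the left-hand side is an integer, this strengthens to $m(H) + \macodeg(H) + 2 \ge |V(H)|$, as required.

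The only conceptual wrinkle is the odd case, where $|V(H)|$ is odd and $H$ has only a near-perfect matching: here the two lemmas do not immediately combine to the desired bound without invoking integrality of $m(H)$ and $\macodeg(H)$. I expect this integrality step to be the main (and in fact only) substantive observation in assembling the two lemmas; it is essentially a half-point gain coming from the parity of $|V(H)|$. Once handled, the proof of \cref{thm:alpha2_chiBounded} consists of little more than applying the appropriate lemma in each case of the dichotomy.
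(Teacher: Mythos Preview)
Your proposal is correct and follows essentially the same approach as the paper: the same case split on $2m(H)$ versus $|V(H)|-1$, invoking \cref{lem: no perfect matching} in one case and combining \cref{lem: bounding anti codeg} with the matching lower bound in the other, then closing the half-point gap by integrality. The paper's proof is organized identically, down to the integrality observation you anticipated.
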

\begin{proof}
As we argued at the beginning of the subsection, it suffices to establish \cref{eq:alpha2matching} where $H$ is the (triangle-free) graph $\overline{G}$.
By \cref{lem: no perfect matching}, we may assume $2m(H) \geq |V(H)|-1$.
Then we obtain using \cref{lem: bounding anti codeg} that
\[m(H) + \macodeg(H)+2 \geq \frac{|V(H)|}{2}-\frac{1}{2}+\frac{|V(H)|}{2}-2+2=|V(H)|-\frac{1}{2}.\]
Since both expressions $m(H) + \mcodeg(H)+2$ and $|V(H)|$ are integers, we obtain that also 
$m(H)+\macodeg(H)+2 \geq |V(H)|$, as desired.
\end{proof}
  
\subsection{Antiprismatic graphs}\label{ssec:antiprismatic}
A graph is {\em antiprismatic} if it does not contain one of $\{K_{1,3}, 2K_1 + K_2, 4K_1\}$ as an induced subgraph. Such graphs are alternatively described by Chudnovsky and Seymour~\cite{ChudSey07I,ChudSey07II} through their complements, the prismatic graphs. A graph is \emph{prismatic} if for every triangle $T$ and vertex $v\notin T$, $v$ is adjacent to precisely one vertex in $T$. 
In this subsection, we give a bound on the clique covering number of the prismatic graphs in order to give a bound on the chromatic number of the antiprismatic graphs.
To do so, we show that we can reduce it to the case $\alpha \leq 2$.

\begin{lem}\label{lem: prismatic}
    If $H$ is a prismatic graph, then $\overline{\chi}(H) \leq \macodeg(H) + 2$.
\end{lem}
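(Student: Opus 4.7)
The plan is to induct on $|V(H)|$, handing off the triangle-free case to Theorem~\ref{thm:alpha2_chiBounded}.

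If $H$ contains no triangle, then $H$ is triangle-free and the complement-side formulation of Theorem~\ref{thm:alpha2_chiBounded} immediately yields $\overline{\chi}(H) \leq \macodeg(H) + 2$. So the substantive case is that $H$ contains a triangle $T = \{a, b, c\}$. I would set $H' := H - T$; being prismatic is clearly hereditary (it is the absence of $K_4$, $K_4 - e$, and $K_1 + K_3$ as induced subgraphs, as one sees by negating the defining condition), so $H'$ is prismatic on fewer vertices and the inductive hypothesis gives $\overline{\chi}(H') \leq \macodeg(H') + 2$.

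Two short observations should then close the induction. First, the triangle $T$ together with an optimal clique cover of $H'$ is a clique cover of $H$, giving $\overline{\chi}(H) \leq 1 + \overline{\chi}(H') \leq \macodeg(H') + 3$. Second, for any $u, v \in V(H')$, the prismatic property applied to the triangle $T$ forces each of $u$ and $v$ to be adjacent in $H$ to exactly one vertex of $T$; hence at least one vertex $t \in T$ is a non-neighbor of both. This vertex $t$ lies outside $V(H')$ and therefore contributes to $\acodeg_H(u,v)$ without being counted in $\acodeg_{H'}(u,v)$, yielding $\macodeg(H) \geq \macodeg(H') + 1$. Chaining the two bounds gives $\overline{\chi}(H) \leq \macodeg(H') + 3 \leq \macodeg(H) + 2$, which completes the inductive step.

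The only delicate point (which I would dispatch as a base case) is when $|V(H) \setminus T| \leq 1$, so that no pair $u, v$ is available in $H'$ for the anti-codegree argument. These instances have at most four vertices and are easy to verify directly. The conceptual heart of the proof is the observation that each triangle peeled off the graph supplies exactly one unit of anti-codegree gain through the prismatic property, precisely offsetting the one extra clique that triangle costs in the cover; no hard step arises.
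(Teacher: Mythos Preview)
Your proof is correct and follows essentially the same approach as the paper: induct by peeling off a triangle, use the prismatic condition to show that removing a triangle raises the maximum anti-codegree by at least one (offsetting the extra clique in the cover), and hand the triangle-free base case to Theorem~\ref{thm:alpha2_chiBounded}. The only cosmetic differences are that the paper inducts on the size of a triangle packing rather than on $|V(H)|$, and you are slightly more careful about the tiny base cases and the hereditariness of being prismatic.
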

\begin{proof}
    Let $H$ be a prismatic graph and let $\mathcal{T}$ be a set of disjoint triangles of $H$ such that $H-\mathcal{T}$ is triangle-free. 
    We prove the statement by induction on $|\mathcal{T}|$.

    The base case of induction is when $|\mathcal{T}|=0$. Then $H$ itself is triangle-free and the result follows from \Cref{thm:alpha2_chiBounded}. 

    Suppose now that $H$ is a prismatic graph such that $H-\mathcal{T}$ is triangle-free with $|\mathcal{T}|\geq 1$.
    Let $T\in \mathcal{T}$. Then $H':=H-T$ is a prismatic graph, since the class of prismatic graphs is closed under taking induced subgraphs. Now $\mathcal{T'}:=\mathcal{T}-T$ is a set of triangles such that $H'-\mathcal{T'}$ is triangle-free with $|\mathcal{T'}|<|\mathcal{T}|$. Then $\overline{\chi}(H')\leq \macodeg(H') +2$ by the induction hypothesis. 
    As $T$ is a triangle, only one clique suffices to cover it. Hence, $\overline{\chi}(H)\leq \macodeg(H')+1+2$. 

    Now we show that adding a triangle to a prismatic graph increases the maximum anticodegree by at least $1$. Let $u,v \in H'$ such that $\acodeg(u,v)=\macodeg(H')$. Since $H$ is a prismatic graph and $u, v \notin T$, $u$ and $v$ are both adjacent to exactly one vertex of $T$. Hence, there is at least one vertex of $T$ which is neither adjacent to $u$ nor to $v$. Thus $\macodeg(H')+1=\acodeg^{H'}(u,v)+1 \leq \acodeg^{H}(u,v) \leq \macodeg(H)$. 
    Therefore, we conclude that $\overline{\chi}(H)\leq \macodeg(H')+1+2\leq \macodeg(H) +2$, as promised.
\end{proof}

Considering the complement $G$ of $H$, we obtain the following corollary.

\begin{cor}\label{cor: antiprismatic}
    If $G$ is an antiprismatic graph, then $\chi(G) \leq \mcodeg(G)+2$.
\end{cor}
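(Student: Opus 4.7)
The plan is to derive this corollary as a direct complementary translation of \Cref{lem: prismatic}. The paper has already set up all the machinery for passing between a graph and its complement: it records that antiprismatic graphs are exactly the complements of prismatic graphs, and it proves the identities $\chi(G) = \overline{\chi}(\overline{G})$ and $\macodeg(G) = \mcodeg(\overline{G})$ (equivalently, $\mcodeg(G) = \macodeg(\overline{G})$). So the work is entirely at the level of manipulating these identities.

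Concretely, I would set $H := \overline{G}$. First I would verify (essentially by citing the definitional correspondence given just before the statement) that $H$ is prismatic, since $G$ being $\{K_{1,3}, 2K_1+K_2, 4K_1\}$-free is equivalent to $H$ being $\{3K_1, K_1+K_2, K_4\}$-free in the sense required by the prismatic definition. Then applying \Cref{lem: prismatic} to $H$ yields $\overline{\chi}(H) \leq \macodeg(H)+2$.

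Finally I would rewrite both sides in terms of $G$:
\[
\chi(G) \;=\; \overline{\chi}(\overline{G}) \;=\; \overline{\chi}(H) \;\leq\; \macodeg(H)+2 \;=\; \macodeg(\overline{G})+2 \;=\; \mcodeg(G)+2,
\]
which is the desired bound. There is essentially no obstacle here; the only step that requires any thought is the first, and even that amounts to reconciling the two equivalent definitions of antiprismatic/prismatic that the paper already states are equivalent via complementation.
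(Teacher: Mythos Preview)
Your approach is correct and matches the paper's, which simply writes ``Considering the complement $G$ of $H$, we obtain the following corollary.'' One small slip: the complements of $K_{1,3}$, $2K_1+K_2$, $4K_1$ are $K_3+K_1$, $K_4-e$, $K_4$, not the graphs you list---but this is immaterial, since the paper already records that antiprismatic and prismatic graphs are complementary, so you may simply cite that rather than recomputing it.
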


\subsection{Three-cliqued graphs}
\label{ssec:three-cliqued}

Let $G$ be a claw-free graph such that $\chi(\overline{G}) \leq 3$. Then there exists a clique cover $(K_1,K_2,K_3)$ of $G$, and $G$ is said to be \emph{three-cliqued}. 
In this section, we show that no such $G$ satisfies \MP by proving a stronger result. We establish that the chromatic number is bounded from above by the maximum codegree of a pair of vertices in the same $K_i$, $i\in\{1,2,3\}$ plus 3. First, we show that this result holds when the graph has no triad by working in its triangle-free complement. Then we extend it to all graphs by showing that any three-cliqued vertex-critical graph for \MP does not contain a triad.

We start with the following bound. This is tight for any $3$-coloring of $C_5$ and for the $3$-coloring of $C_6$ where diagonal vertices have the same color. The proof follows uses the same ideas as in \cref{ssec:cf_alpha2}, but here we need to bound the codegree of two vertices satisfying $c(u)=c(v)$. This extra constraint introduces complications which make the proof significantly longer.



\begin{lem}\label{lem:alpha2_strong_triangleFree}
    Let $H$ be a triangle-free graph that admits a proper $3$-coloring $c$. Then there exist two vertices $u,v$ of $H$ with $c(u)=c(v)$ such that $\overline{\chi}(H) \leq \acodeg(u,v)+3$.
\end{lem}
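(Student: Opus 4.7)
The argument adapts \cref{ssec:cf_alpha2} under the added constraint $c(u) = c(v)$. Since $H$ is triangle-free, $\overline{\chi}(H) = |V(H)| - m(H)$; writing $n = |V(H)|$, $m = m(H)$ and recalling that any same-colored pair is automatically non-adjacent, the statement reduces to finding $u, v$ with $c(u) = c(v)$ satisfying $|N(u) \cup N(v)| \leq m + 1$. My plan is to fix a maximum matching $M$ of $H$, let $X$ be the set of vertices it misses, and split by whether $X$ contains a same-colored pair. If $X$ does contain a same-colored pair $u, v$, then the proof of \cref{lem: no perfect matching} goes through verbatim: were some $ab \in M$ to have both endpoints in $N(u) \cup N(v)$, triangle-freeness would force $a \in N(u), b \in N(v)$ up to a swap, making $u{-}a{-}b{-}v$ an $M$-augmenting path and contradicting maximality; hence $|N(u) \cup N(v)| \leq m$, with plenty of slack.

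Otherwise all vertices of $X$ have distinct colors, so $|X| \leq 3$. I would then pick $u \in X$ (when $|X| \geq 1$) of some color $i$ and a matched $v \in C_i$ with $M$-mate $v'$. Two constraints bound $|N(u) \cup N(v)|$: triangle-freeness still forbids both endpoints of any matched edge from lying in a single $N(u)$ or $N(v)$; and if some $ab \in M \setminus \{vv'\}$ splits as $a \in N(u), b \in N(v)$, then the alternating path $u{-}a{-}b{-}v{-}v'$ can be extended to an $M$-augmenting path whenever $v'$ has any neighbor in $X \setminus \{u\}$. Combining these (and using that $X$ is independent) yields $|N(u) \cup N(v)| \leq m + |X| - 1$ for a reasonable choice of $v$, already sufficient when $|X| \leq 2$. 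To treat $|X| = 3$ I plan a counting step: if for every color $i$, every matched $v \in C_i$ were adjacent to both unmatched vertices of colors $\ne i$, then the bound $\deg(u_j) \leq m$ (triangle-freeness applied along $M$) combined with the forced adjacencies would give $|C_j| \geq m+1$ for every $j$, contradicting $n = 2m + 3$; hence I can choose $v$ with $|N(v) \cap X| \leq 1$ to tighten the bound to $m + 1$. For the perfect-matching sub-case $|X| = 0$, both $u, v$ must be matched and the alternating configuration degenerates into a length-$5$ walk $u'{-}u{-}a{-}b{-}v{-}v'$ that does not augment $M$, so I would argue via averaging over same-colored pairs within a color class of suitable size, or via a more global structural argument using the 3-coloring.

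The main obstacle is the perfect-matching sub-case. There the bound is tight (witnessed by $C_6$ with its diagonal 3-coloring), leaving no margin, and the simple augmenting-path argument from the easy case is unavailable since every vertex is matched. Handling it forces us to exploit both triangle-freeness and the rigidity of the 3-coloring on a more global scale, which is precisely the source of the extra technical complexity alluded to just before the statement.
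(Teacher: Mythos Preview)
Your plan has a genuine gap already in the case $|X|=1$, not only in the perfect-matching case you flag. When $|X|=1$ you are forced to take $u$ as the unique unmatched vertex, and then $X\setminus\{u\}=\emptyset$, so your alternating path $u\text{--}a\text{--}b\text{--}v\text{--}v'$ can never be extended to an augmenting path. Consequently you have no control over the number $t$ of ``bad'' matched edges $ab$ with $a\in N(u)$, $b\in N(v)$, and the bound $|N(u)\cup N(v)|\le m+|X|-1$ does not follow. Here is a concrete instance: take $V(H)=\{u,v,v',a_1,a_2,b_1,b_2\}$ with edges $vv',\,a_1b_1,\,a_2b_2,\,ua_1,\,ua_2,\,vb_1,\,vb_2$ and $3$-colouring $c^{-1}(1)=\{u,v\}$, $c^{-1}(2)=\{v',a_1,a_2\}$, $c^{-1}(3)=\{b_1,b_2\}$. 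This $H$ is triangle-free, $M=\{vv',a_1b_1,a_2b_2\}$ is a maximum matching with $X=\{u\}$, and the only colour-$1$ pair is $(u,v)$, for which $|N(u)\cup N(v)|=5>m+1=4$. The lemma still holds---via the colour-$2$ pair $(a_1,a_2)$---but your rule ``pick $u\in X$'' cannot find it. Similar issues arise for $|X|=2,3$: your augmenting argument needs $v'$ to have a neighbour in $X\setminus\{u\}$, and neither your counting step for $|X|=3$ nor the phrase ``a reasonable choice of $v$'' addresses $v'$ at all.

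The paper's proof takes a rather different route once $|X|\le 3$. Its key structural lemma is that if $H$ contains a \emph{bicoloured} $C_4$ (a $4$-cycle using only two of the three colours), then the desired bound holds. For $|V(H)|\ge 22$ a pigeonhole argument on four vertices of the largest colour class forces such a $C_4$. For smaller $H$ the paper fixes a maximum matching chosen to minimise edges between the two smaller colour classes (with a secondary maximisation), uses this extremal choice to handle $|X|=2,3$ when $|V(H)|\ge 10$, and finishes $|V(H)|\le 15$ with $|X|\le1$ (and $|V(H)|\le 9$ with $|X|\le 3$) by explicit case analysis. Your augmenting-path idea is natural, but it does not seem to survive the tight instances without importing something like the bicoloured-$C_4$ mechanism or an equivalent global constraint.
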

\begin{proof}
    Since $H$ is triangle-free, any clique cover of $H$ consists of cliques of size $1$ and $2$. Just as in \cref{ssec:cf_alpha2}, we have  
    $\overline{\chi}(H) = |V(H)|-m(H)$. So analogously to \cref{eq:alpha2matching}, we need to prove 
    \begin{equation}\label{eq:triangleFree_mcodegCliqueBegin}
    |V(H)| \leq m(H) + \max_{c(u)=c(v)} \acodeg(u,v)+3.
    \end{equation}
    
    Let $M$ be a maximum matching of $H$ and let $X$ be the set of  vertices unmatched by $M$. Then $X$ is an independent set and $|V(H)|=2m(H)+|X|$. We derive from this two useful inequalities equivalent to \cref{eq:triangleFree_mcodegCliqueBegin}. First,

    \begin{align}
        &2m(H)+|X| \leq m(H) + \max_{c(u)=c(v)} \acodeg(u,v)+3 \notag \\
        \iff & m(H) +\frac{|X|}{2}-3\leq \max_{c(u)=c(v)} \acodeg(u,v).\label{eq:maxcodeg_mX}
    \end{align}

Second,
    \begin{align}
&|V(H)| \leq \left \lceil \frac{|V(H)|-|X|}{2} \right\rceil + \max_{c(u)=c(v)} \acodeg(u,v) + 3 \notag \\
 \iff &\left\lfloor \frac{|V(H)|+|X|}{2}\right\rfloor-3 \leq \max_{c(u)=c(v)} \acodeg(u,v) \notag\\
 \iff &  \frac{|V(H)|+|X|}{2}-3.5 \leq \max_{c(u)=c(v)} \acodeg(u,v), \label{eq:triangleFree_mcodegClique_odd}
 \end{align} where the last equivalence is trivially true if $|V(H)|+|X|$ is odd, but also holds if $|V(H)|+|X|$ is even, as the right-hand side is integral by definition.

\begin{claim}\label{cl:multicolorX}
        If there are two vertices $u,v\in X$ such that $c(u)=c(v)$, then \cref{eq:triangleFree_mcodegCliqueBegin} holds.
    \end{claim}
    
    \begin{proofclaim}
        Suppose that there are two unmatched vertices $u,v$ such that $c(u)=c(v)$. Let $e=ab$ be any edge in $M$. We claim that at least one of the vertices is a common anti-neighbor of $u$ and $v$. Indeed, since $H$ is triangle-free, $u$ is adjacent to at most one vertex of $a$ and $b$, and so is $v$. Hence, if neither $a$ nor $b$ is an anti-neighbor of both $u$ and $v$, we have without loss of generality that $au, bv \in E(H)$. But now we can construct a larger matching than $M$ by replacing the edge $ab$ with edges $au$ and $bv$, a contradiction.

        Therefore, $\acodeg(u,v)$ counts at least one endpoint of every edge of $M$. Moreover, $\acodeg(u,v)$ accounts for all unmatched vertices except for $u,v$. The other endpoint of each edge in the matching is counted by $|M| = m(H)$. Thus every vertex is counted by $m(H) + \acodeg(u,v)+2$, which confirms \cref{eq:triangleFree_mcodegCliqueBegin}.
    \end{proofclaim}

Therefore, we can assume that every maximum matching $M$ leaves at most one vertex per color class unmatched. In particular, $|X| \leq 3$. So to prove \cref{eq:triangleFree_mcodegClique_odd}, it is sufficient to prove the following:
\begin{align}
\frac{|V(H)|}{2}-2 \leq \max_{c(u)=c(v)} \acodeg(u,v).\label{eq:triangleFree_mcodegClique_X3} 
 \end{align}

A bicolored $C_4$ is a cycle on which the proper coloring $c$ uses only two colors, i.e.~the diagonally opposite vertices have the same color.

\begin{claim}\label{claim:bicolored_C4}
    If $H$ admits a bicolored $C_4$, then  \cref{eq:triangleFree_mcodegCliqueBegin} holds.
\end{claim}

\begin{proofclaim}
As just discussed, it suffices to establish \cref{eq:triangleFree_mcodegClique_X3}.
    Suppose for a contradiction that $\frac{|V(H)|}{2}-2>\acodeg(u,v)$ for all vertices $u, v$ satisfying $c(u)=c(v)$, so  $\frac{|V(H)|}{2}-2.5 \geq \acodeg(u,v)$. Since vertices in the same color class are nonadjacent, we also have for such $u,v$ that
    \[|N(u) \cup N(v)| = |V(H)|-2-\acodeg(u,v)\geq \frac{|V(H)|}{2}+0.5.\]

    Let $u,x,v,y$ be a bicolored $C_4$. It follows from the above observation that $|N(u) \cup N(v)|+ |N(x) \cup N(y)| \geq |V(H)|+1$. Hence, there exists a vertex $z$ that belongs to both $N(u) \cup N(v)$ and $N(x) \cup N(y)$. Without loss of generality, we may assume $z \in N(u)$ and $z \in N(x)$, but now $zux$ forms a triangle, a contradiction.
\end{proofclaim}


We can now complete the proof for all $H$ such that $|V(H)| \geq 22$.

For convenience, suppose yellow, red, and blue are the three colors used by $c$ on $H$ and suppose, without loss of generality, that their corresponding color classes $Y,R,B$ in $H$ satisfy $|Y| \geq |R| \geq |B|$.

\begin{claim} \cref{eq:triangleFree_mcodegCliqueBegin} holds for all graphs $H$ satisfying $|V(H)| \geq 22$.
\end{claim}

\begin{proofclaim}
By \cref{claim:bicolored_C4}, we may assume that $H$ contains no bicolored $C_4$.
As in the proof of \cref{claim:bicolored_C4}, suppose for a contradiction that $\frac{|V(H)|}{2}-2>\acodeg(u,v)$ for all vertices $u, v$ satisfying $c(u)=c(v)$.
Since $|V(H)| \geq 22$, we have $|Y| \geq 8$. Letting $u,v,x,y \in Y$, it follows just as in the proof of \cref{claim:bicolored_C4} that
\[|N(u) \cup N(v)|+ |N(x) \cup N(y)| \geq |V(H)|+1.\]
As $Y$ is an independent set, this inequality implies that there are at least $|Y|+1$ vertices that belong to both $N(u) \cup N(v)$ and $N(x) \cup N(y)$. Notice that there are $2 \cdot 2=4$ ways to select a vertex from each side. Since $|Y|\geq 8$, we have $|Y|+1 \geq 9$. Therefore, there are three vertices that have the same pair of neighbors in $Y$. Hence, at least two of them have the same color. These two vertices together with their neighbors in $\{t,u\} \times \{v,w\}$ form a bicolored $C_4$, a contradiction. 
\end{proofclaim}

The remainder of the proof is devoted to those $H$ with at most 21 vertices. We need a special choice of matching $M$. Let $M$ be a maximum matching such that $M$ minimizes $BR$-edges, and among those, the one that maximizes $YR$-edges.

    If $V(H) \leq 4$ then \cref{eq:triangleFree_mcodegClique_X3} is trivially true. For $V(H) > 4$, if $|R|\leq 1$ then $|Y| \geq V(H)-2$, implying $\max_{c(u)=c(v)} \acodeg(u,v)\geq |Y|-2 = V(H)-4 \geq \frac{V(H)}{2}-2$. So also in that case the inequality is trivially true. Hence, we can assume $|R|\geq 2$ from now on. Last, if $\max_{c(u)=c(v)} \acodeg(u,v)\geq m(H)$, the inequality holds, because $|X|\leq 3$.
    
    \begin{claim}
        If $X$ contains a vertex of $Y$ and $|Y|\geq 4$, then  \cref{eq:triangleFree_mcodegCliqueBegin} holds.
        In particular, if $|V(H)|\geq 10$ and $|X|=3$, then  \cref{eq:triangleFree_mcodegCliqueBegin} holds.
    \end{claim}
    
    \begin{proofclaim}
        Suppose $y\in Y\cap X$ and let $w\in Y\setminus\{y\}$. 
        As $Y$ is an independent set, $w,y$ are both non-adjacent to $Y$, in particular to at least the yellow endpoint of $YR$- and $YB$-edges of $M$. 
    
        As $H$ does not contain any triangle, any vertex is adjacent to at most one endpoint of an edge. In particular, $w$ is adjacent to at most one endpoint of every $BR$-edge in $M$.
        Moreover, $y$ is not adjacent to any endpoints of $BR$-edges of $M$, for otherwise we could have replaced this edge with one incident to $y$, obtaining a maximum matching $M'$ with fewer $BR$-edges.
    
        Hence, we have that $u,v$ are both non-adjacent to an endpoint of each edge in $M$, except for the one containing $u$, implying that $\acodeg(u,v) \geq m(H)-1$.  

        If $|X|\leq 2$, then \cref{eq:maxcodeg_mX} holds. 
        If $\acodeg(w,y)\geq m(H)$, then \cref{eq:maxcodeg_mX} holds again. Hence, we will prove that $\acodeg(w,y)= m(H)$. 

        Suppose on the contrary that $\acodeg(w,y)=m(H)-1$.
        For this equality to hold, a vertex $u\in Y$ must be non-adjacent to exactly one endpoint of each edge of $M$. 
        So if $xy$ is an edge in $M$ where $y\in Y$, then every vertex $w\in Y\setminus\{y,v\}$ is adjacent to $x$ in $H$. 

        As $|Y\setminus\{v\}|\geq 3$, at least two vertices $w_1, w_2\in Y\setminus\{v\}$ are matched under $M$ to vertices of the same color. But since $w_1$ is adjacent to the other endpoint of the edge in $M$ including $w_2$ and symmetrically, this is a bicolored $C_4$ and we are done by \cref{claim:bicolored_C4}.
    \end{proofclaim}

    \begin{claim}
        If $|Y|\geq 4$, $|R|\geq 2$ and $X$ contains a vertex of $R$, then  \cref{eq:triangleFree_mcodegCliqueBegin} holds.
        In particular, if $|V(H)|\geq 10$ and $|X|=2$,  \cref{eq:triangleFree_mcodegCliqueBegin} holds.
    \end{claim}
    
    \begin{proofclaim}
        By the previous claim, we can assume $Y\cap X=\emptyset$ implying $|X|\leq 2$.

        Let $v\in R\cap X$. First, we show that if there is a $BR$-edge in $M$, then  \cref{eq:triangleFree_mcodegCliqueBegin} holds. 
        
        Let $w \in R$ lie in a $BR$-edge of $M$. Suppose that there exists some $BY$-edge such that none of its endpoints is a common non-neighbor of $v$ and $w$. Then we must have $w$ adjacent to the $Y$ and $v$ adjacent to the $B$ endpoint, as $w$ cannot be adjacent to both endpoints. Hence, we can replace the $wB$ and $BY$-edge of $M$ with the $vB$ and $wY$ edges. Then the number of edges and also $BR$-edges stays the same, but the number of $YR$-edges increases. This is impossible by our choice of $M$.

        Hence, every $BY$-edge of $M$ contains a common non-neighbor of $v$ and $w$. Since the red vertices form an independent set, we have $\acodeg(v,w) \geq m(H)-1$ and \cref{eq:maxcodeg_mX} is verified as $|X|\leq2$.

        Now suppose that there exists no $BR$-edge in $M$. Then we see that $m(H) \leq |Y|\leq \acodeg (u,v)-2$ with $u,v\in Y$, implying
        \[m(H) + \max_{c(u)=c(v)} \acodeg(u,v)+3 \geq 2m(H)-2+3 \geq 2m(H)+1.\]
        Thus \cref{eq:triangleFree_mcodegCliqueBegin} is satisfied if $|X|\leq 1$ or if $m(H)<|Y|$. Hence, suppose that $m(H)=|Y|=\frac{|V(H)|}{2}-1$ and $N(u) \cup N(v) = B \cup R$ for all $u,v \in Y$. Similar to above, if $|V(H)| \geq 10$ then $|Y| \geq 4, |R| \geq 2$ implying that $H$ contains a bicolored $C_4$ and we are done by \cref{claim:bicolored_C4}.    
    \end{proofclaim}

    \begin{claim}
        If $|Y|\geq 6$ and $|R|\geq 2$, then  \cref{eq:triangleFree_mcodegCliqueBegin} holds.
        In particular, if $|V(H)|\geq 16$,  \cref{eq:triangleFree_mcodegCliqueBegin} holds.
    \end{claim}
    
    \begin{proofclaim}
        As $|Y|\geq 4$ and $|R|\geq 2$, $X$ does not contain a vertex from $Y\cup R$ as otherwise  \cref{eq:triangleFree_mcodegCliqueBegin} holds by the two previous claims. 
        
       As in the proof of \cref{claim:bicolored_C4}, $|X|\leq 1$ implies that to prove \cref{eq:triangleFree_mcodegClique_odd} it suffices to show the following: \begin{align}\label{main ineq}
        \frac{|V(H)|}{2}-3 \leq \max_{c(u)=c(v)} \acodeg(u,v). 
        \end{align}

        For a contradiction, suppose that $\frac{|V(H)|}{2}-3>\acodeg(u,v)$ for all vertices $u$ and $v$ satisfying $c(u)=c(v)$, so  $\frac{|V(H)|}{2}-3.5 \geq \acodeg(u,v)$. Since $u$ and $v$ are in the same color class, they are not adjacent. Hence, for all $c(u)=c(v)$,
    \[|N(u) \cup N(v)| = |V(H)|-2-\acodeg(u,v)\geq \frac{|V(H)|}{2}+1.5.\]

        If $|V(H)| \geq 16$, then $|Y| \geq 6$. Let $u,v,x,y \in Y$. Then
        \[|N(u) \cup N(v)|+ |N(x) \cup N(y)| \geq |V(H)|+3.\]
        Hence, there are at least $|Y|+3 \geq 9$ vertices in both $N(u) \cup N(v)$ and $N(x) \cup N(y)$. Notice that there are $2 \cdot 2=4$ ways to select a vertex from each side. By the pigeonhole principle, at least three vertices have the same pair of neighbors in $Y$. Hence, at least two of them have the same color. These two vertices together with their neighbors form a bicolored $C_4$. Thus, by \cref{claim:bicolored_C4},  \cref{eq:triangleFree_mcodegCliqueBegin} holds.
    \end{proofclaim}

    To complete the proof, we carry out a case analysis. First, we cover instances where $|V(H)|\geq 10$ and $|X| \leq 1$, then when $|V(H)|\leq 9$ and $|X|\leq3$. For all cases, we do a proof by contradiction. Suppose that \cref{main ineq} does not hold. Notice that no vertex has a neighbor in its own color.
    
\begin{itemize}
    \item $|V(H)|=15$. Since $|Y| \leq 5$, we have $|Y|=|R|=|B|=5$. Since \cref{main ineq} does not hold, we have $\max_{c(u)=c(v)} \acodeg(u,v) \leq \frac{V(H)}{2}-3 = 4.5$. So $|N(u) \cup N(v)| \geq 15-2-4 =9$ for all $c(u)=c(v)$. Hence, there exists $v \in Y$ such that $|N(v)| \geq 5$ and such that $v$ has at least three neighbors $v_1,v_2,v_3$ of the same color, say blue. Since $H$ does not contain a bicolored $C_4$ in the colors yellow and blue, $N(v_i) \cap N(v_j) \cap R = \{v\}$ for $i\ne j$. Then by the pigeonhole principle applied to the neighborhoods of the $v_i$'s, there are two vertices $v_i$ and $v_j$, say $v_1$ and $v_2$, such that $N(v_1) \cup N(v_2)$ contains at most three yellow vertices. Then $|N(v_1) \cup N(v_2)| \leq 3+|R|= 8$, a contradiction as $v_1$ and $v_2$ are both blue.
    \item $|V(H)|=14$. In this case, we have $|Y|=|R|=5$ and $|B|=4$ and $|N(u) \cup N(v)| \geq 9$ for all $c(u)=c(v)$. Hence, for all $u,v\in Y$ we have $N(u) \cup N(v)= B \cup R$. Thus every red vertex is non-adjacent to at most one yellow vertex. Hence, every pair of red vertices has two common yellow neighbors implying that there is a bicolored $C_4$, a contradiction to \cref{claim:bicolored_C4}.
    \item $|V(H)|=13$. In this case, we have $|Y|=5$ and $|N(u) \cup N(v)| \geq 8$ for all $c(u)=c(v)$. Thus, for all $u,v\in Y$, $N(u) \cup N(v)= B \cup R$ which leads just in the previous case to a bicolored $C_4$.
    \item $|V(H)|=12$. In this case, we have $|N(u) \cup N(v)| \geq 8$ for all $c(u)=c(v)$. Hence, every color class has size $4$. Hence, again for all $u,v\in Y$ $N(u) \cup N(v)= B \cup R$ leading to a bicolored $C_4$.
    \item $|V(H)|=11$. In this case, we have $|N(u) \cup N(v)| \geq 7$ for all $c(u)=c(v)$. Hence, every color class has size at most $4$, and so $|Y|=|R|=4$ and $|B|=3$. Once again $N(u) \cup N(v)= B \cup R$  for all $u,v \in Y$ leading to a bicolored $C_4$.
    \item $|V(H)|=10$. In this case, $|Y| \geq 3$, but  $|N(u) \cup N(v)| \geq 7 > |B \cup R|$, a contradiction.
\end{itemize}
For $|V(H)|\leq 9$, we may not assume that $|X|\leq 1$, but \cref{cl:multicolorX} still holds. So in particular $|X|\leq 3$. Notice that, by the definition of $X$, we have $|X| \equiv V(H) \pmod 2$.
\begin{itemize}
    \item If $|V(H)| \leq 4$, then $\frac{|V(H)|}{2}-2 \leq 0$ so then \cref{eq:triangleFree_mcodegClique_X3} is automatically satisfied.
    \item $|V(H)| =5$. If $|X|=1$ then \cref{main ineq} is satisfied as $\frac{|V(H)|}{2}-3 \leq 0$. Thus assume $|X|=3$ and $m(H)=1$. If $m(H)=1$, then the graph is a subgraph of  $K_{1,4}$ or $K_3 \cup 2K_1$. If $G \subseteq K_{1,4}$, then two of the leaves of $K_{1,4}$ have the same color and anti-codegree at least 2. If $G$ is a subgraph of $K_3 \cup 2K_1$, then one isolated vertex and a vertex in the triangle have the same color and anticodegree at least 1. Hence, in both cases \cref{eq:triangleFree_mcodegClique_X3} is satisfied.
    \item $|V(H)|=6$. If $|X|=0$ then \cref{main ineq} is satisfied as $\frac{|V(H)|}{2}-3 \leq 0$. Hence, $|X|=2$ and $m(H)=2$. If \cref{eq:triangleFree_mcodegClique_X3} is not satisfied, then $\max_{c(u)=c(v)} \acodeg(u,v)=0$. Hence, every color class has at most two vertices. Moreover, if $u,v \in Y$ then $N(u) \cup N(v) = B \cup R$ and if $u,v \in R$ then $N(u) \cup N(v) = B \cup Y$. Hence, there are two vertex disjoint edges between $Y$ and $R$. So there exists a maximal matching with two $YR$-edges. Hence, this maximum matching leaves two blue vertices unmatched implying that the main inequality is satisfied by \cref{cl:multicolorX}.
    \item $|V(H)|=7$. Then $|Y| \geq 3$ implying  $\max_{c(u)=c(v)} \acodeg(u,v) \geq 1$. Hence, if $|X|=1$ then \cref{main ineq} is satisfied. So assume $|X|=3$ so that each color class contains exactly one unmatched vertex. If the main inequality is not satisfied, we have $\max_{c(u)=c(v)} \acodeg(u,v)=1$ and $m(H)=2$.
    
    In particular, we have $|Y|=3$ and the first edge in the matching is $YR$ and the other is $YR$ or $YB$. Let $b$ be the unmatched blue vertex. Then $b$ is not adjacent to the yellow vertex of $YR$ edge in $M$ as otherwise, there is a maximum matching with two unmatched red vertices, a contradiction. $b$ is also not adjacent to the unmatched yellow vertex. Hence, the unmatched yellow vertex and the yellow endpoint of the $YR$ edge in $M$, have two common non-neighbors the third yellow vertex and $b$. Hence, \cref{eq:triangleFree_mcodegClique_X3} is satisfied.
    
    \item $|V(H)|=8$. Then $|Y| \geq 3$, implying  $\max_{c(u)=c(v)} \acodeg(u,v)=1$. Therefore, if $|X|=0$, then \cref{main ineq} holds.  So assume $|X|=2$. If \cref{eq:triangleFree_mcodegClique_X3} is not satisfied, we have $\max_{c(u)=c(v)} \acodeg(u,v)=1$ and $m(H)=3$.
    
    Hence, any color class has at most three vertices. So $|Y|=|R|=3$ and $|B|=2$. In particular, $N(u) \cup N(v) = R \cup B$ for all $u,v \in Y$ and $N(s) \cup N(t) = Y \cup B$ for all $s,t \in R$. Hence, every blue vertex is adjacent to at least two vertices of both $R$ and $Y$.    
    Let $b \in B$ be adjacent to $u,v \in Y$ and $s,t \in R$. Since $H$ is triangle-free, we have that $u,v$ and $s,t$ form an independent set. But then $s,t \not \in N(u) \cup N(v)$, a contradiction.

    \item $|V(H)|=9$. Note that $m(H) \geq 3$. Therefore, if $|Y| \geq 5$, we are done. We need to prove that  $m(H)+\max_{c(u)=c(v)} \acodeg(u,v)+3\geq 9$. We do a separate case distinction based on whether $|Y|=3$ or $|Y|=4$, and whether $m(H)=3$ or $m(H)=4$, so if $|X|=3$ or $|X|=1$. Notice that if $|Y|=3$, then $|Y|=|B|=|R|=3$.
    \begin{itemize}
    \renewcommand\labelitemii{$\circ$}
        \item $|Y|=4$ and $m(H)=4$. In this case, $\max_{c(u)=c(v)} \acodeg(u,v)\geq 2$ implying $m(H)+\max_{c(u)=c(v)} \acodeg(u,v)+3 \geq 4+2+3=9$ as desired.
        \item $|Y|=4$ and $m(H)=3$. If the inequality is not satisfied, then $\max_{c(u)=c(v)} \acodeg(u,v)= 2$. So $N(u) \cup N(v) = B \cup R$ for all $u,v\in Y$. Hence, every non-yellow vertex is non-adjacent to at most one yellow vertex. Since $|R| \geq 3$, two red neighbors have at least two common neighbors. Hence, $H$ contains a bicolored $C_4$ in yellow and red.
        \item $|Y|=3$ and $m(H)=4$. If the inequality does not hold, then $\max_{c(u)=c(v)} \acodeg(u,v) = 1$. Hence, if $c(u)=c(v)$, then $N(u) \cup N(v)$ contains all vertices of the other color classes. So every vertex is non-adjacent to at most one vertex in another color class.
        Since there is no bicolored $C_4$, every vertex is adjacent to exactly two vertices in each of the other color classes. Let $v \in Y$ be adjacent to $s,t \in R$ and $a,b \in B$. Since $H$ is triangle-free, we have that $s,t,a,b$ must form an independent set. So $\acodeg(a,b) \geq \{s,t\} +\{c\} = 3$, a contradiction.
        \item $|Y|=3$ and $m(H)=3$. If the inequality does not hold, then $\max_{c(u)=c(v)} \acodeg(u,v) \leq 3$.
        So each color class contains one unmatched vertex $
        w\in Y$, $r \in R$ and $a\in B$. Moreover, the matching consists of edges $bs = BR, yc \in YB$ and $xt \in YR$. Now, we have that $w$ is non-adjacent to $r$, since $M$ is maximal.
        If $y$ is adjacent to $r$, then we can replace the edge $yc$ with $yr$ to have a maximum matching where two vertices of $B$ are unmatched, implying that the inequality holds. So $y$ is not adjacent to $r$. Thus both $r$ and $x$ are common non-neighbors of $y$ and $w$. So every other blue and red vertex is in $N(w) \cup N(y)$. 

        If $w$ is adjacent to $b$ or $w$ is adjacent to $s$, then we can replace the edge $bs$ to obtain a maximum matching where two vertices of the same vertex are unmatched, a contradiction. So $y$ is adjacent to $b$ and $y$ is adjacent to $s$, implying that $ybs$ is a triangle, a contradiction. \qedhere
    \end{itemize}
\end{itemize}
\end{proof}

We now bound the chromatic number of $G$ by the maximum codegree of two vertices in the same $K_i$ of the clique covering $(K_1,K_2,K_3)$ of $G$.

\begin{lem}\label{lem: updatedstatement}
    Let $G$ be a claw-free graph and let $(K_1,K_2,K_3)$ be a clique cover of $G$. Then there exists $i\in \{1,2,3\}$ such that \[\chi(G)\leq  \max_{u,v \in K_i} \codeg(u,v)+3.\]
\end{lem}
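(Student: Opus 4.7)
My plan is to pass to the complement and induct on $|V(G)|$, with \cref{lem:alpha2_strong_triangleFree} supplying the base case. Set $H := \overline{G}$, so that $(K_1, K_2, K_3)$ becomes a proper $3$-coloring $c$ of $H$, $\chi(G) = \overline{\chi}(H)$, and $\codeg^G(u, v) = \acodeg^H(u, v)$ whenever $u, v$ lie in a common $K_i$ (such $u, v$ are non-adjacent in $H$). So the lemma is equivalent to showing: there exist $i$ and $u, v \in K_i$ with $\overline{\chi}(H) \le \acodeg^H(u, v) + 3$. When $G$ has no triad, $H$ is triangle-free and this is precisely the content of \cref{lem:alpha2_strong_triangleFree}, settling the base case.

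For the inductive step, suppose $G$ has a triad $T = \{t_1, t_2, t_3\}$. Each $K_i$ is a clique, so any independent triple places exactly one vertex per $K_i$; label the triad so that $t_i \in K_i$. The graph $G' := G - T$ is claw-free (hereditary) and carries the clique cover $(K_1 \setminus \{t_1\}, K_2 \setminus \{t_2\}, K_3 \setminus \{t_3\})$, so by induction there exist some $i$ and $u, v \in K_i \setminus \{t_i\}$ with $\chi(G') \le \codeg^{G'}(u, v) + 3$. To lift this bound to $G$ I combine two observations. First, because $T$ is independent, extending an optimal coloring of $G'$ by assigning one fresh color to all three vertices of $T$ yields $\chi(G) \le \chi(G') + 1$. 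Second, because $u, v, t_i$ all lie in the clique $K_i$ and $t_i \notin V(G')$, the vertex $t_i$ is a common neighbor of $u$ and $v$ in $G$ not already counted by $\codeg^{G'}$, giving $\codeg^G(u, v) \ge \codeg^{G'}(u, v) + 1$. Together these produce
\[\chi(G) \le \chi(G') + 1 \le \codeg^{G'}(u, v) + 4 \le \codeg^G(u, v) + 3,\]
so the same index $i$ witnesses the bound for $G$.

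The main obstacle is handling the small edge cases where the inductive step cannot fire, namely when every $K_j \setminus \{t_j\}$ has at most one vertex so that no pair is available to feed to the induction hypothesis. This forces $|V(G)| \le 6$ and leaves only a handful of three-cliqued claw-free graphs, each of which can be verified by hand to satisfy $\chi(G) \le \max_{u, v \in K_i} \codeg^G(u, v) + 3$ for a suitable $i$. A slightly slicker way to avoid the case analysis is to fix once and for all a maximal family $\mathcal{T}$ of vertex-disjoint triads in $G$, apply \cref{lem:alpha2_strong_triangleFree} to the triad-free remainder $\overline{G - V(\mathcal{T})}$ with the restricted $3$-coloring, and telescope the estimates $\chi(G) \le \chi(G - V(\mathcal{T})) + |\mathcal{T}|$ and $\codeg^G(u, v) \ge \codeg^{G - V(\mathcal{T})}(u, v) + |\mathcal{T}|$ simultaneously (choosing $u, v$ in the $K_i$ surviving the removal), which collapses the induction into a single step.
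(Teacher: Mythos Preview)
Your proof is correct and follows essentially the same approach as the paper: both peel off a triad (you via direct induction, the paper via a minimal counterexample), invoke \cref{lem:alpha2_strong_triangleFree} for the triad-free base case, and use the key observation that removing the triad decreases the relevant codegree by at least one since the triad's vertex in $K_i$ is a common neighbor of any surviving pair in $K_i$. The paper's handling of the $|V(G)|\le 6$ edge case is slightly more explicit than your hand-wave, and your ``slicker'' single-step variant (stripping a maximal family of disjoint triads at once) is a nice repackaging but not a different idea.
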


\begin{proof}
For a contradiction, suppose there exists some claw-free $G$ with a clique cover $(K_1,K_2,K_3)$ that is a minimal counterexample, i.e.~$\chi(G)> \max_{u,v \in K_i} \codeg(u,v) +3$. 

%

By \cref{lem:alpha2_strong_triangleFree}, we may assume that $\alpha(G) \ge 3$.
So $G$ contains a triad, say $T$, and $\chi(G)\leq \chi(T)+\chi(G-T)=1+\chi(G-T)$. Moreover, since $(K_1,K_2,K_3)$ is a clique cover for $G$, $T$ contains one vertex of $K_i$ for each $i$.

Suppose first that $G$ does not contain a $K_i$ with at least three vertices. By the pigeonhole principle, $|V(G)|\leq 6$, and $|G-T|\leq 3$. As $G$ is a counterexample, $\max_{u,v \in K_i} \codeg(u,v)+3<\chi(G)\le \chi(G-T)+1$. From this, we have that $G-T$ is a triangle, $|K_1|=|K_2|=|K_3|=2$ and $\max_{u,v \in K_i} \codeg(u,v)=0$. 
The triad consists of $a_1,a_2,a_3$ and and the other three vertices $b_1, b_2,b_3$ form a triangle such that $a_i,b_i \in K_i$.
Since $\max_{u,v \in K_i} \codeg(u,v) =0$, we have that $a_i$ is not adjacent to $b_j$ if $i \neq j$. Hence, we can color the graph with three colors: $a_1,b_2$ are red, $a_2,b_3$ are yellow and $a_3,b_1$ are blue. Hence, $\chi(G)\le 3$, which gives the desired contradiction.

Now, we may assume without loss of generality that $|K_1|\geq 3$. 
Thus $\max_{u,v \in K_i-T} \codeg(u,v)$ is well-defined. 
Moreover, as $(K_1,K_2,K_3)$ is a clique cover for $G$, $\codeg^{G-T}(u,v) \leq \codeg^G(u,v)-1$ for all $u,v \in K_i-T$, provided $|K_i|\ge 3$. Thus $\max_{u,v \in K_i-T} \codeg(u,v)\leq \max_{u,v \in K_i} \codeg(u,v)-1$.
Then $G-T$ with the clique cover $(K_1-T, K_2-T, K_3-T)$ is also a counterexample as it satisfies \[\max_{u,v \in K_i-T} \codeg(u,v) +3 \leq \max_{u,v \in K_i} \codeg(u,v) -1 + 3 < \chi(G) - 1 \leq \chi(G-T).\] Hence, $G$ was not a minimal counterexample, a contradiction. 
\end{proof}

\begin{cor}\label{cor:three-cliqued}
    If $G$ is a claw-free graph with $\chi(\overline{G})\leq 3$, then $\chi(G)\leq \mcodeg(G)+3$.
\end{cor}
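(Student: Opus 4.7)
The proof of this corollary should be essentially a one-line deduction from \cref{lem: updatedstatement}, so the plan is simply to unpack the hypothesis and invoke the lemma. Since $\chi(\overline{G}) \leq 3$, there is a proper $3$-coloring of $\overline{G}$, whose color classes are independent sets in $\overline{G}$ and hence cliques $K_1, K_2, K_3$ in $G$, forming a clique cover of $G$ (where we allow some $K_i$ to be empty). As the lemma requires an actual clique cover $(K_1,K_2,K_3)$, I would first dispose of the degenerate case where some $K_i$ has fewer than two vertices: then $\chi(\overline{G}) \leq 2$, which means $V(G)$ is covered by two cliques, so $\chi(G) \leq \omega(G) \leq \mcodeg(G) + 2 \leq \mcodeg(G)+3$ and we are done.

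In the main case, all three parts $K_1, K_2, K_3$ have at least two vertices, so $\max_{u,v \in K_i}\codeg(u,v)$ is well-defined for each $i$. Applying \cref{lem: updatedstatement} to the claw-free graph $G$ equipped with the clique cover $(K_1,K_2,K_3)$ yields an index $i \in \{1,2,3\}$ such that
\[
\chi(G) \leq \max_{u,v \in K_i}\codeg(u,v) + 3.
\]
Since $K_i \subseteq V(G)$, every pair $u,v \in K_i$ satisfies $\codeg(u,v) \leq \mcodeg(G)$ by definition of the maximum codegree, hence $\max_{u,v \in K_i}\codeg(u,v) \leq \mcodeg(G)$, and therefore $\chi(G) \leq \mcodeg(G) + 3$, as required.

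Since the entire substance of the result lies in \cref{lem: updatedstatement}, there is no real obstacle here; the only thing to be careful about is making sure the statement makes sense when some $K_i$ is too small to admit a pair $u,v$, but that trivial case is handled directly by the inequality $\omega(G) \leq \mcodeg(G)+2$ already recorded in \cref{sec:prelim}.
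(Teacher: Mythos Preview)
Your approach is exactly the paper's: the corollary is stated there without proof, as an immediate consequence of \cref{lem: updatedstatement} together with the trivial bound $\max_{u,v\in K_i}\codeg(u,v)\le \mcodeg(G)$. So the main line of your argument is fine.

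There is, however, a slip in your handling of the degenerate case. You assert that if some $K_i$ has fewer than two vertices then $\chi(\overline{G})\le 2$. That is false when $|K_i|=1$: take $\overline{G}=C_5$, which has $\chi(\overline{G})=3$, yet every proper $3$-colouring of $C_5$ has a singleton colour class. (The implication you want only goes through when some $K_i$ is \emph{empty}.) As a secondary point, the step ``$V(G)$ is covered by two cliques, so $\chi(G)\le\omega(G)$'' is true but not immediate; it needs K\H{o}nig's theorem (equivalently, perfection of complements of bipartite graphs).

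The cleanest fix is simply not to split off this case at all: \cref{lem: updatedstatement} already guarantees an index $i$ for which the bound holds, and tracing its proof (via \cref{lem:alpha2_strong_triangleFree}) shows that the produced $i$ necessarily has $|K_i|\ge 2$, so the maximum is well-defined. Alternatively, dispose of $|V(G)|\le 3$ trivially and for $|V(G)|\ge 3$ arrange a cover by three nonempty cliques by splitting one clique if necessary; then apply the lemma directly.
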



\subsection{The 2-joins}\label{ssec:$2$-joincases}
In this section we consider the fifth outcome of \cref{thm:structure-king-reed}, namely the outcome where $G$ admits one of several specific types of $2$-joins; that is $G$ admits a canonical interval $2$-join, an antihat $2$-join, a strange $2$-join, a pseudo-line $2$-join or a gear $2$-join. The case where $G$ admits a canonical interval $2$-join is already handled in \cref{ssec:canonical_interval_2join}, so we will only consider the latter four.
The aim of this section is to show that a graph admitting one of those $2$-joins is not $\mathcal{G}_{cf}$-critical for \MPdot.

The $2$-joins used in \cref{thm:structure-king-reed} are defined as generalized $2$-joins $((X_1,Y_1),(X_2,Y_2))$ where $(G_2,X_2,Y_2)$ one of several specific types of \say{strips}.
That is if $(G_2, X_2, Y_2)$ is a 
\say{antihat strip}, \say{strange strip}, \say{pseudo-line strip}, or a \say{gear strip} then $((X_1, Y_1), (X_2, Y_2))$ is called a antihat $2$-join, a strange $2$-join, a pseudo-line $2$-join, or a gear $2$-join, respectively. 
A strip is a triple $(G, A, B)$ where $G$ is a claw-free graph, $A$ and $B$ are cliques of $G$ satisfying that for each $v \in A$ (resp. $B$) the neighbors of $v$ in $G \setminus A$ (resp. $G \setminus B$) form a clique.
For our purposes it is only important to remember that strips are triples $(G, A, B)$ where $G$ is a graph and $A,B$ are disjoint cliques of $G$.

In~\cite{king-reed-claw-free}, the class of antihat strips, class of strange strips, and class of gear strips are each defined by first defining smaller classes of strips that we call antihat ribbons, strange ribbons, and gear ribbons. 
(We postpone these definitions to the corresponding subsections.)
Then for each ribbon $H$ they define a set of allowed matchings $\mathcal{M}_H$ on $H$.
Finally the class of antihat strips (respectively strange strips,  gear strips) is defined as the class of graphs $G$ of thickenings of $H$ along $M$ for any choice of antihat ribbon $H$ (respectively strange ribbon, gear ribbon) 
and matching $M \in \mathcal{M}_H$.
Note that by \cref{lem:thickening-and-$2$-join}, 
if $G_2$ is a strict thickening of some other graph then $G$ is not $\mathcal{G}_{cf}$-critical for \MPdot.
We call a $2$-join $((X_1, Y_2)$, $(X_2, Y_2))$ a \emph{ribbon $2$-join} if $(G_2, X_2, Y_2)$ is a ribbon.
Therefore by the discussion above, we have the following.
\begin{obs}\label{obs:ribbons}
    Let $G$ be a claw-free graph that  is $\mathcal{G}_{cf}$-critical for \MPdot.
    Then every antihat $2$-join of $G$ is a \emph{ribbon} antihat $2$-join.
    Similarly, every strange $2$-join of $G$ is a ribbon strange $2$-join, and every gear $2$-join of $G$ is a ribbon gear $2$-join. 
\end{obs}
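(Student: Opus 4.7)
The plan is to unfold the definitions producing the three $2$-join types in question and then invoke \cref{lem:thickening-and-$2$-join} in each case. For an antihat $2$-join $((X_1,Y_1),(X_2,Y_2))$ of $G$, the construction recalled just above the observation tells us that $(G_2, X_2, Y_2)$ is an antihat strip, which by definition is a thickening of some antihat ribbon $(H, A, B)$ along an allowed matching $M \in \mathcal{M}_H$, with $X_2 = I(A)$ and $Y_2 = I(B)$. The situation is identical for strange and gear $2$-joins, with \say{antihat ribbon} replaced by \say{strange ribbon} or \say{gear ribbon}, respectively.

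With this setup, the key step is a single application of \cref{lem:thickening-and-$2$-join}: its hypotheses match exactly the description above (namely $G_2$ is a thickening of some graph $H_2$ with $X_2 = I(A)$ and $Y_2 = I(B)$ for cliques $A, B$ of $H_2$), so its conclusion says that if $G_2$ is a \emph{strict} thickening of $H$ then $G$ is neither $\mathcal{G}_{cf}$-critical nor $\mathcal{G}_{ql}$-critical for \MPdot. Since we are assuming $G$ is $\mathcal{G}_{cf}$-critical, the thickening cannot be strict.

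It remains to observe that a non-strict thickening coincides with its underlying ribbon. Indeed, $|G_2| = |H|$ forces every $I(v)$ to be a singleton, and then step~(3) in the definition of thickening cannot remove a nonempty proper subset of the edges between $I(u)$ and $I(v)$ for any $uv \in M$ (between two singletons there is a unique edge, whose only proper subset is empty). Hence $M = \emptyset$ and $G_2$ coincides with $H$ with $X_2 = A$ and $Y_2 = B$, so $(G_2, X_2, Y_2)$ is itself a ribbon of the required type, and therefore the $2$-join is a ribbon $2$-join.

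The only non-routine ingredient is the verification, inside the intricate definitions of antihat, strange, and gear strips in~\cite{king-reed-claw-free}, that $X_2$ and $Y_2$ really have the form $I(A)$ and $I(B)$ for cliques $A, B$ of the underlying ribbon; this is the hypothesis required to invoke \cref{lem:thickening-and-$2$-join}. With that bookkeeping in hand, the three parts of the observation follow immediately and in parallel.
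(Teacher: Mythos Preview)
Your proposal is correct and follows essentially the same approach as the paper: the observation is presented there as an immediate consequence of the preceding discussion, which says precisely that each strip is a thickening of a ribbon and then invokes \cref{lem:thickening-and-$2$-join} to rule out strict thickenings. Your additional care in arguing that a non-strict thickening must have $M=\emptyset$ and hence coincide with the underlying ribbon is a nice explicit step that the paper leaves implicit.
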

Hence, in the case of antihat strips, strange strips or gear strips we may restrict our view to the ribbon graphs $H$ in the class and in particular we need not describe how King and Reed defined the allowed matchings $\mathcal{M}_H$.

For the class of pseudoline $2$-joins the situation is even easier. We will observe that as a consequence of the definition of pseudo-line strips from~\cite{king-reed-claw-free}, whenever $(G_2, X_2, Y_2)$ is a pseudoline strip, the graph $G_2$ is a strict thickening of some other graph and therefore no $\mathcal{G}_{cf}$-critical graph for \MP may admit a pseudoline $2$-join by \cref{lem:thickening-and-$2$-join}.

The first type of $2$-join from 
\cref{thm:structure-king-reed}, namely the canonical interval $2$-join, is already discussed in \cref{ssec:canonical_interval_2join}.
Each of the four other types of $2$-joins are defined and addressed separately in the rest of the subsection.

\subsubsection{Antihat 2-joins}
\begin{wrapfigure}{L}{0.16\textwidth}
\centering
  \begin{tikzpicture}[rotate=18, scale=0.8]
      \tikzstyle{vertex}=[circle, draw, fill=black,
                        inner sep=0pt, minimum width=4pt]
  \foreach \i in {0,...,5} {
    \node[vertex] (v\i) at ({cos(360/5*\i)}, {sin(360/5*\i)}) {};
  }

\node[vertex] (center) at (0,0) {};

  \foreach \i in {0,...,5} {
    \pgfmathtruncatemacro{\j}{mod(\i+1,5)}
    \draw (v\i) -- (v\j);   
    \draw (center) -- (v\i);
  } 
\end{tikzpicture}
\caption{$W_5$.}
\label{fig:w5}
\end{wrapfigure}

We will require the following definition in order to define antihat $2$-joins. A wheel on 6 vertices, denoted $W_5$, is a cycle on $5$ vertices (i.e.~a $C_5$) and an additional vertex called the center adjacent to all of the other vertices (see \cref{fig:w5}).

We are now ready to define the class of antihat ribbons.
Let $H$ be a claw-free graph with vertex set $A \cup B \cup C$ where $A = \{a_1 ,\dots , a_k \}$, $B = \{b_1 ,\dots , b_k \}$, and $C = \{c_1 ,\dots , c_k \}$ are disjoint cliques. The adjacency between the cliques are as follow: for $1 \leq i, j \leq k$, $a_i$ and $b_j$ are nonadjacent if and only if $i \neq j$, and $a_i$ and $b_i$ are adjacent to $c_j$ if and only if $i\neq j$.
Let $X \subseteq V(H)$ such that $C$ contains at least two non-elements of $X$.
Then $(H \setminus X, A \setminus X, B \setminus X)$ is a strip.
If $H \setminus X$ contains a $W_5$ then we call $(G_2, X_2, Y_2) = (H \setminus X, A \setminus X, B \setminus X)$ an \emph{antihat ribbon}.

Recall, that if $G$ admits a generalized $2$-join such that $(G_2 , X_2 , Y_2 )$ is an antihat ribbon, then we say that $((X_1 , Y_1 ), (X_2 , Y_2 ))$ is a \emph{ribbon antihat $2$-join}.

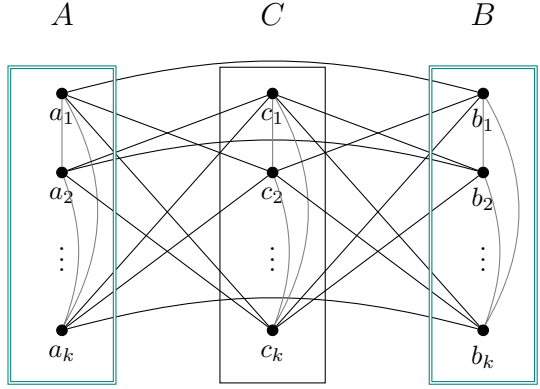
\begin{figure}[ht!]
    \centering
    \begin{tikzpicture}[scale=0.7]
\tikzstyle{vertex}=[circle, draw, fill=black,
                        inner sep=0pt, minimum width=4pt]

    \node[] (A) at (1, 9.5) {\large $A$};
    \node[vertex] (a1) [label=below:$a_1$] at (1,8) {};
    \node[vertex] (a2) [label=below:$a_2$] at (1,6.5) {};
    \node[vertex] (ak) [label=below:$a_k$] at (1,3.5) {};
    \node[] (dots) at (1,5) {$\vdots$};

    \node[] (B) at (9, 9.5) {\large $B$};
    \node[vertex] (b1) [label=below:$b_1$] at (9,8) {};
    \node[vertex] (b2) [label=below:$b_2$] at (9,6.5) {};
    \node[vertex] (bk) [label=below:$b_k$] at (9,3.5) {};
    \node[] (dots) at (9,5) {$\vdots$};
    
    \node[] (C) at (5, 9.5) {\large $C$};
    \node[vertex] (c1) [label=below:$c_1$] at (5,8) {};
    \node[vertex] (c2) [label=below:$c_2$] at (5,6.5) {};
    \node[vertex] (ck) [label=below:$c_k$] at (5,3.5) {};
    \node[] (dots) at (5,5) {$\vdots$};

    \foreach \x in {1,2,k}{
        \draw[] (a\x) to [bend left = 15] (b\x);
        }

    \foreach \y in {a,b,c}{
        \draw[gray] (\y 1)-- (\y 2); 
        \draw[gray] (\y 2) to [bend left = 20] (\y k) to [bend right] (\y1);
        }
        
    \foreach \w in {a,b}{
        \draw (\w k)--(c1)--(\w2);
        \draw (\w k)--(c2)--(\w1);
        \draw (\w1)--(ck)--(\w2);
    }

    \draw[color=teal, double] (0,2.5) rectangle (2,8.5);
    \draw (4,2.5) rectangle (6,8.5);
    \draw[color=teal, double] (8,2.5) rectangle (10,8.5);
\end{tikzpicture}
    \caption{Example of an antihat ribbon with $X = \emptyset$.}
\end{figure}

\begin{lem}\label{antihat critical}
    If $G$ admits an antihat $2$-join then $G$ is not  $\mathcal{G}_{cf}$-critical for \MPdot.
\end{lem}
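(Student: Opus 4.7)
The plan is to follow a strategy analogous to the canonical interval $2$-join (\cref{lem:canonical interval 2 join}), strengthened by the structural reductions in \cref{obs:ribbons} and \cref{lem:thickening-and-$2$-join}. Suppose for contradiction that $G$ is $\mathcal{G}_{cf}$-critical for \MP and admits an antihat $2$-join $((X_1,Y_1),(X_2,Y_2))$. By \cref{obs:ribbons}, we may assume $(G_2, X_2, Y_2)$ is an antihat ribbon, so $G_2 = H \setminus X$ for the antihat graph $H$ with cliques $A$, $B$, $C$, and $X_2 = A \setminus X =: A'$, $Y_2 = B \setminus X =: B'$, $C' := C \setminus X$ of sizes $\alpha$, $\beta$, $\gamma$.

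A key observation is that $\alpha(G_2) \le 2$: any three vertices of $G_2$ meet one of the cliques $A', B', C'$ in two vertices and hence include an edge. Thus $G_2$ is antiprismatic, and in particular $\chi(G_2) \le \mcodeg(G_2)+2$ by \cref{cor: antiprismatic}. Since $G$ is $\mathcal{G}_{cf}$-critical and $G_1$ is a proper induced claw-free subgraph of $G$, there is a $(\mcodeg(G)+3)$-coloring $\varphi_1$ of $G_1$. The plan is to extend $\varphi_1$ to all of $G$, which will contradict the assumption that $G$ satisfies \MPdot.

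To extend, I would color the vertices of $G_2$ greedily in the order: first $C'$ (which, by the $2$-join, has no neighbor in $G_1$), then the vertices of $A'$ one at a time, and finally those of $B'$. At each stage the number of already-colored neighbors of the next vertex must be at most $\mcodeg(G)+2$, and this is established via codegree lower bounds such as $\mcodeg(G) \ge |X_1|+\alpha-2$ from the clique $X_1 \cup A'$, its symmetric analogue $\mcodeg(G) \ge |Y_1|+\beta-2$, and $\mcodeg(G) \ge \gamma-2$ from the clique $C'$, together with cross-clique bounds $\codeg^G(a_i,a_j) \ge |X_1|+\alpha+\gamma-4$, $\codeg^G(b_i,b_j) \ge |Y_1|+\beta+\gamma-4$ and $\codeg^G(c_i,c_j) \ge \alpha+\beta+\gamma-6$, all obtained by directly enumerating common neighbors in $X_1,Y_1$ and the three cliques of $G_2$.

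The main obstacle will be handling configurations in which the above codegree bounds are off by one relative to the greedy budget: this arises when the index sets $I_A,I_B,I_C$ underlying $A',B',C'$ overlap poorly (for example when $I_A \subseteq I_C$). In these edge cases one substitutes a different pair to recover the needed unit of slack, such as $\codeg^G(a_i,b_i) \ge |X_1 \cap Y_1| + \gamma - [c_i \in C']$ coming from the corner clique $\{a_i,b_i\} \cup (C' \setminus \{c_i\})$, or a codegree between some $a_i \in A'$ and a non-matching $c_j \in C'$. The $W_5$ hypothesis in the antihat ribbon definition together with $|C \setminus X| \ge 2$ guarantees that $G_2$ is large enough ($\alpha+\beta+\gamma \ge 6$) that these codegree inequalities are not vacuous and the required case analysis can be carried out. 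Once the extension succeeds we obtain a $(\mcodeg(G)+3)$-coloring of $G$, contradicting \MPdot.
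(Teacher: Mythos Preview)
Your framework is right and matches the paper's: reduce to an antihat ribbon via \cref{obs:ribbons}, take a $(\mcodeg(G)+3)$-coloring of $G_1$ by criticality, and extend greedily to $G_2$. The observation $\alpha(G_2)\le 2$ is correct but plays no role in your actual argument; you cite \cref{cor: antiprismatic} and then abandon it for a direct greedy count.

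The greedy scheme as stated has a genuine gap. Take the extremal case $I_A=I_B=I_C=\{1,\dots,k\}$ (this gives an antihat ribbon once $k\ge 4$) with $X_1\cap Y_1=\emptyset$. Then in your order $C',A',B'$, the last vertex $b_i\in B'$ has already-colored neighbours $Y_1\cup (B'\setminus\{b_i\})\cup\{a_i\}\cup(C'\setminus\{c_i\})$, of size $|Y_1|+2k-1$. But every codegree bound you list tops out at $|Y_1|+2k-4$ (from $b_i,b_j$) or $3k-6$ (from $c_i,c_j$), so $\mcodeg(G)+2$ can equal $|Y_1|+2k-2$, one short. Your proposed fallback $\codeg(a_i,b_i)=|X_1\cap Y_1|+\gamma-[c_i\in C']$ drops $Y_1$ entirely, so it does not recover the missing unit. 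The same deficit shows up if you put $C'$ last: then $\deg(c_\ell)=\alpha+\beta+\gamma-3$ while $\codeg(c_i,c_j)=\alpha+\beta+\gamma-6$. No pure codegree inequality closes this, because the degree of a $c$-vertex can genuinely equal $\mcodeg(G)+3$.

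The paper's resolution is not a sharper codegree bound but a colouring trick. From the $W_5$ they extract a specific index $i$ with both $a_i,c_i\in G_2$; since $a_ic_i\notin E(G)$, they assign $\varphi(a_i)=\varphi(c_i)=\pi$ before extending. Every other $c_\ell$ is adjacent to both $a_i$ and $c_i$, so two of its colored neighbours share a colour, recovering exactly the one unit of slack the greedy count lacks; and each $b_j$ sees exactly one of $a_i,c_i$, so avoiding $\pi$ costs nothing extra there. This precoloured nonadjacent pair is the missing idea in your sketch; your use of the $W_5$ hypothesis only as a lower bound on $|V(G_2)|$ is too weak.
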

\begin{proof}
    Suppose that $G$ admits an antihat $2$-join and that $G$ is $\mathcal{G}_{cf}$-critical for \MPdot. 
    Then by \cref{obs:ribbons}, $G$ admits a generalized $2$-join $((X_1, Y_1)(X_2, Y_2))$ such that $(G_2, X_2, Y_2)$ is an antihat ribbon.

    Since $G$ is $\mathcal{G}_{cf}$-critical for \MPdot, there exists $\varphi$, a coloring of $G_1$ with at most $\mcodeg(G_1)+3\leq \mcodeg(G)+3$ colors. By definition of antihat ribbon, $G_2$ contains a copy of $W_5$. We denote by $W \cup \{w\} \subseteq V(G_2)$ the vertex set of a $W_5$, where $w$ is the center of the wheel and $W$ is the vertex set of the five-cycle. 
    We will discuss this $W_5$ and use it to extend $\varphi$ to $G_2$. 
     
    \begin{claim}\label{claim:sameindex}
        For some choice of index $i$, both of $a_i, c_i$ belong to $G_2$. 
    \end{claim}
    
    \begin{proofclaim}
    Let us analyze how $W$ may embed in $G_2$ based on the definition of antihat ribbon. 
    Let $A', B', C'$ denote $A \setminus X$, $B \setminus X$, $C \setminus X$, respectively, from the definition of antihat ribbon.
    First, observe that since $W$ induces a cycle at most two vertices of each clique $A',B',C'$ can be part of $W$.
    Note that by definition, for any two distinct $c_i, c_j \in C'$, every vertex in $G_2$ (including $c_i$ and $c_j$) is adjacent to at least one of $c_i, c_j$.
    Hence, $W$ can contain at most one vertex of $c_i \in C'$.
    Moreover, the two non-neighbors $a_i, b_i$ of $c_i$ in $A \cup B$ must be in $W$ as well.
    \end{proofclaim}

    We conclude by describing, for a contradiction, how to extend $\varphi$, the $(\mcodeg(G)+3)$-coloring of $G_1$, to a coloring of $G$.
    We repeat some key facts from the definition of $2$-join. 
    Recall, that $X_1 \cup A'$ and $Y_1 \cup Y'$ are both cliques and vertices of $A'$ have no neighbors in $G_1 \setminus X_1$ and vertices of $B'$ have no neighbors in $G_1 \setminus Y_1$.

    First, we color the vertex $a_i$ and suppose $\phi(a_i)=\pi$. Then we also assign $\phi(c_i)=\pi$, this is possible as $a_ic_i \not\in E(G)$.

    Each $b \in B'$ has at most $\omega(G) -1$ neighbors in the graph on $V(G_1) \cup B'$.
    Let $\pi$ denote a color used in $\varphi$.
    Since, $\mcodeg(G)+3\geq \omega(G)+1$, we can extend $\varphi$ to $B'$ without introducing any more colors or using the color $\pi$.
    Since each $a' \in A'$ has at most $\omega(G)$ neighbors in the graph on $V(G') \cup A' \cup B'$ we can extend $\varphi$ to $A'$ without introducing any new colors. 
    
       Last, greedily color the  $C\setminus (X \cup \{c_i\})$. We analyze why this does not introduce more colors.
    Note that for each $c_\ell \in C'$ with $\ell \neq i$, every neighbor of $c_\ell$ besides possibly $a_i, b_i, c_i$ is also a neighbor of $c_i$.
    So in particular, $\deg(c_\ell)\leq \mcodeg(G_2)+3\leq \mcodeg(G)+3$.
    By \cref{claim:sameindex} and the definition of antihat ribbon, $a_i, c_i \in N(c_\ell)$.
    As $a_i$ and $c_i$ are neighbors of $c_\ell$ and $\varphi(a_i)=\varphi(c_i)$, this implies that there is at least one color available for $c_\ell$ with $\ell\neq i$.

    So the graph $G$ is $(\mcodeg(G)+3)$-colorable, implying that it is not $\mathcal{G}_{cf}$-critical for \MPdot, a contradiction.
\end{proof}

\subsubsection{Strange 2-joins}

Let $G_2$ be a claw-free graph on cliques $A = \{a_1 , a_2 \}$, $B = \{b_1 , b_2 , b_3 \}$, and $C = \{c_1 , c_2 \}$ with adjacency as follows: $a_1 , b_1$ are adjacent; $c_1$ is adjacent to $a_2, b_2$ and $b_3$ ; $c_2$ is adjacent to $a_1 , a_2 , b_1$, and $b_2$. All other pairs are nonadjacent. 
Then $(G_2, A, B)$ is a strip and we call it a strange ribbon.
Recall that if $G$ admits a generalized $2$-join such that $(G_2 , X_2 , Y_2 )$ is a strange ribbon, then we say that $((X_1 , Y_1 ), (X_2 , Y_2 ))$ is a \emph{ribbon strange $2$-join}.

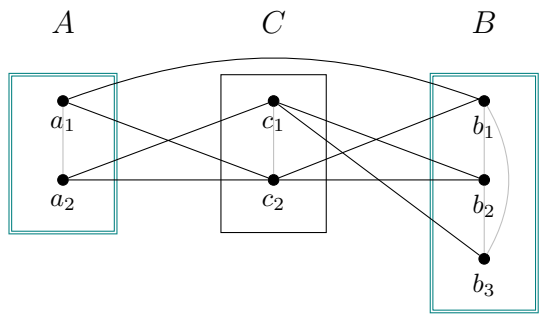
\begin{figure}[ht!]
    \centering
    \begin{tikzpicture}[scale=0.7]
\tikzstyle{vertex}=[circle, draw, fill=black,
                        inner sep=0pt, minimum width=4pt]
    \draw[color=teal, double] (0,5.5) rectangle (2,8.5);
    \draw (4, 5.5) rectangle (6,8.5);
    \draw[color=teal, double] (8, 4) rectangle (10,8.5);
    
    \node[] (A) at (1,9.5) {\large $A$};
    \node[vertex] (a1) [label=below:$a_1$] at (1,8) {};
    \node[vertex] (a2) [label=below:$a_2$] at (1,6.5) {};
    
    \node[] (B) at (9,9.5) {\large $B$};
    \node[vertex] (b1) [label=below:$b_1$] at (9,8) {};
    \node[vertex] (b2) [label=below:$b_2$] at (9,6.5) {};
    \node[vertex] (b3) [label=below:$b_3$] at (9,5) {};
    
    \node[] (C) at (5,9.5) {\large $C$};
    \node[vertex] (c1) [label=below:$c_1$] at (5,8) {};
    \node[vertex] (c2) [label=below:$c_2$] at (5,6.5) {};

    \draw[] (a1) to [bend left=20](b1)--(c2)--(a2)--(c1)--(b2)--(c2)--(a1);
    \draw[ ] (c1)--(b3);

    \draw [lightgray] (a1)--(a2);
    \draw [ ,lightgray] (b1)--(b2)--(b3) to [bend right] (b1);
    \draw [ , lightgray] (c1)--(c2);

\end{tikzpicture}
   \caption{The strange ribbon.}
    \label{fig:strangestrip}
\end{figure}

\begin{lem}\label{Lem: strange 2 join}
    If $G$ admits a strange $2$-join, then $G$ is not $\mathcal{G}_{cf}$-critical for \MPdot.
\end{lem}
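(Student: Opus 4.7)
The plan is to argue by contradiction: suppose $G$ is $\mathcal{G}_{cf}$-critical for \MP and admits a strange $2$-join. By \cref{obs:ribbons}, I may assume this is a ribbon strange $2$-join, so $(G_2,X_2,Y_2)$ is the strange ribbon on $A \cup B \cup C$ with $X_2 = A = \{a_1,a_2\}$, $Y_2 = B = \{b_1,b_2,b_3\}$, and $C = \{c_1,c_2\}$, with edges exactly as in the definition above (see \cref{fig:strangestrip}). By $\mathcal{G}_{cf}$-criticality, $G_1$ admits a proper coloring $\varphi$ using at most $\mcodeg(G_1)+3 \leq \mcodeg(G)+3$ colors, and the goal is to extend $\varphi$ to all of $G$, contradicting \MPdot.

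The key observation driving the extension is that $a_1 c_1 \notin E(G)$ and $b_3 c_2 \notin E(G)$, and moreover that $c_1$ and $c_2$ have no neighbors in $G_1$. The plan is to exploit this by forcing $\varphi(c_1) := \varphi(a_1)$ and $\varphi(c_2) := \varphi(b_3)$, so that $C$ is colored without spending any new color. Concretely, I color the vertices of $G_2$ in the order $a_1, a_2, b_3, c_1, c_2, b_1, b_2$: first pick two distinct colors for $a_1,a_2$ outside $\varphi(X_1)$; next pick a color for $b_3$ outside $\varphi(Y_1) \cup \{\varphi(a_1),\varphi(a_2)\}$; set $\varphi(c_1) := \varphi(a_1)$ and $\varphi(c_2) := \varphi(b_3)$; and finally color $b_1$ and then $b_2$ greedily.

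Verification is a counting check using $\omega(G) \leq \mcodeg(G)+2$ together with the fact that $X_1 \cup A$ and $Y_1 \cup B$ are cliques of $G$, yielding $|X_1| \leq \mcodeg(G)$ and $|Y_1| \leq \mcodeg(G)-1$. For $a_1,a_2$ one has $\mcodeg(G)+3-|X_1| \geq 3$ available colors; for $b_3$ one has $\mcodeg(G)+3-|Y_1|-2 \geq 2$. The identifications $\varphi(c_1) = \varphi(a_1)$ and $\varphi(c_2) = \varphi(b_3)$ must be checked against the previously colored neighbors of $c_1$ in $G_2$ (namely $a_2$ and $b_3$) and of $c_2$ (namely $a_1$, $a_2$, and $c_1$), which is precisely why the color of $b_3$ is chosen to avoid $\varphi(a_1)$ and $\varphi(a_2)$. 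The tightest step is $b_2$: after the identifications its colored neighborhood $Y_1 \cup \{b_1,b_3,c_1,c_2\}$ contributes at most $|Y_1|+3$ distinct forbidden colors, still leaving at least $\mcodeg(G)+3-(|Y_1|+3) \geq 1$ available color.

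I do not anticipate a substantial obstacle. The main care needed is bookkeeping: one must confirm the non-adjacencies $a_1c_1 \notin E$ and $b_3c_2 \notin E$ directly from the definition of the strange ribbon, and then carefully track how the identifications $\varphi(c_1)=\varphi(a_1)$ and $\varphi(c_2)=\varphi(b_3)$ collapse the forbidden color sets for the remaining uncolored vertices $b_1,b_2$. There is no structural step beyond the choice of the two non-adjacent pairs in $C \times (A \cup B)$ to identify; the argument is essentially the same template as in the proof of \cref{antihat critical}, adapted to the smaller and more rigid structure of the strange ribbon.
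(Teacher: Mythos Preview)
Your proof is correct. Both your argument and the paper's reduce to the ribbon case via \cref{obs:ribbons} and then extend a $(\mcodeg(G)+3)$-coloring of $G_1$ to $G_2$, but the extension strategies differ. You bound $|X_1|$ and $|Y_1|$ via $\omega(G)\le\mcodeg(G)+2$ and then use the color-identification trick from the antihat case, forcing $\varphi(c_1)=\varphi(a_1)$ and $\varphi(c_2)=\varphi(b_3)$ so that the forbidden sets at $b_1,b_2$ collapse enough to finish greedily. The paper instead observes directly that each $v\in A\cup B$ satisfies $\deg_G(v)\le\mcodeg(G)+2$: since $a_1,a_2$ (and likewise $b_1,b_2$) share all but one neighbor apart from each other, one has $\deg_G(a_i)=\codeg(a_1,a_2)+2$ and $\deg_G(b_j)=\codeg(b_1,b_2)+2$. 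This lets the paper color $A\cup B$ greedily in any order, and then handle $c_2,c_1$ last, using $\codeg(c_1,c_2)=2$ to guarantee at least five colors are in play. The paper's route is a bit more direct and avoids the careful vertex ordering, and it yields the slightly sharper bounds $|X_1|\le\mcodeg(G)-1$ and $|Y_1|\le\mcodeg(G)-2$; yours has the virtue of reusing the antihat template verbatim. Both are short and elementary.
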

\begin{proof}
Suppose not. Then by \cref{obs:ribbons}, there is a strange ribbon $(G_2, X_2, Y_2)$ such that $((X_1, Y_1), (X_2,Y_2))$ is a strange $2$-join of $G$.

Since $G$ is $\mathcal{G}_{cf}$-critical, there exists a coloring $\varphi$ of $G_1$ with at most $\mcodeg(G_1)+3\leq \mcodeg (G) +3$ colors. Let us show that it is possible to extend $\varphi$ to $G_2$.

First extend $\varphi$ to $A\cup B$ by observing that the only edges between $G_1$ and $G_2$ lie in the cliques $X \cup A$ and $Y \cup B$. 
From the definition of strange ribbon, we observe that 
$N_G(a_1) \setminus \{a_2, b_1\} = N_G(a_2) \setminus \{a_1, c_1\}$ 
So, 
$\deg_G(a_1)=\deg_G(a_2)= \codeg^G(a_1,a_2)+2\leq \mcodeg(G) +2$.
Similarly, $N_G(b_1) \setminus \{b_2, a_1\} =N_G(b_2) \setminus \{b_1, c_1\}$ so
$\deg(b_3) \leq \deg_G(b_1)=\deg_G(b_2) = \codeg^G(b_1,b_2)+2\leq \mcodeg (G)+2$. 
So $\varphi$ can be extended to $A \cup B$ greedily using at most $\mcodeg(G) + 3$ colors. 

Finally extend $\varphi$ to $C$ by noticing that the vertices of $C$ only have neighbors of within $G_2$. Thus $\deg_G(c_1)=4$ and $\deg_G(c_2)=5$. Moreover, since $\codeg^G(c_1,c_2)=|\{a_2,b_2\}|=2$, $\varphi$ can use at least five colors. Therefore, it is possible to first extend $\varphi$ to $c_2$, 
as at this stage $c_2$ only has four neighbors that have already received colors.
Then since $\deg(c_1) = 4$ there will still be a color available for $c_1$.
Hence, the graph $G$ is then $(\mcodeg(G)+3)$-colorable, a contradiction with its $\mathcal{G}_{cf}$-criticality.
\end{proof}

\subsubsection{Pseudo-line 2-joins}
We will not require the full definition of pseudo-line $2$-joins or pseudo-line ribbons in order to eliminate this case.
As a consequence of the definition of King and Reed~\cite{king-reed-claw-free}, every pseudo-line strip $(G, A,B)$ satisfies both of the following:
\begin{enumerate}
    \item $G$ contains $W_5$, and
    \item there is a graph $J$ such that $G$ is the thickening of the line graph of $J$. 
\end{enumerate}
Since $W_5$ is not contained in any line graph (see e.g.~\cite{linegraphs}), we have the following.
\begin{obs}
    If $(G, A, B)$ is a pseudo-line strip then $G$ is the strict thickening of some other graph.
\end{obs}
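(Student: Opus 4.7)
The plan is to combine the two stated consequences of the definition of a pseudo-line strip: namely that $G$ contains $W_5$ as an induced subgraph, and that $G$ is a thickening of $L(J)$ for some graph $J$. Given these, the only thing left to verify is that the thickening cannot fail to be strict.

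First, I would unpack what a non-strict thickening looks like. If $G$ is a thickening of $L(J)$ with $|V(G)| = |V(L(J))|$, then since each substituting clique $I(v)$ is nonempty by definition, every $I(v)$ must consist of exactly one vertex. But the third step of the thickening construction requires, for each $uv \in M$, removing a nonempty proper subset of the edges between $I(u)$ and $I(v)$; when both sides are singletons there is only one such edge and no nonempty proper subset is available. Hence $M = \emptyset$, the thickening is entirely trivial, and we must have $G = L(J)$.

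Second, I would combine this with the two cited facts. Since $G$ contains $W_5$, the identification $G = L(J)$ would force $W_5$ to be an induced subgraph of the line graph $L(J)$, contradicting the fact that no line graph contains $W_5$ as an induced subgraph. This contradiction rules out the non-strict case, so $G$ is a strict thickening, as claimed.

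I do not anticipate any real obstacle here: the argument is a short bookkeeping check on the definition of \textit{strict} thickening, plus the incompatibility of $W_5$ with line graphs that is already quoted from the literature. The only thing to be slightly careful about is to observe that non-strict thickenings really do collapse to the base graph (because the edge-removal operation cannot act on singleton cliques), which is what prevents any subtler form of non-strict thickening from sneaking in.
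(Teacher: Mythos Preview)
Your proposal is correct and follows essentially the same approach as the paper: the paper simply notes the two quoted properties of pseudo-line strips and the fact that $W_5$ is not contained in any line graph, leaving the observation as an immediate consequence. Your write-up just makes explicit the one missing detail---that a non-strict thickening forces all $I(v)$ to be singletons and hence $M=\emptyset$, so $G=L(J)$---which is exactly the implicit step in the paper's one-line justification.
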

Hence, by \cref{lem:thickening-and-$2$-join}, the following holds.
\begin{obs}
    If a graph $G$ admits a pseudo-line $2$-join then $G$ is not $\mathcal{G}_{cf}$-critical for \MPdot.
\end{obs}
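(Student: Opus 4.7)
The plan is to derive this observation as an essentially immediate corollary of the preceding structural observation together with \cref{lem:thickening-and-$2$-join}. Suppose, for a contradiction, that $G$ is $\mathcal{G}_{cf}$-critical for \MP and admits a pseudo-line $2$-join. By definition, there is a generalized $2$-join $((X_1,Y_1),(X_2,Y_2))$ of $G$ such that $(G_2,X_2,Y_2)$ is a pseudo-line strip.

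By the observation just above, $G_2$ is a strict thickening of some graph $H_2$ (indeed, the thickening of a line graph $L(J)$). To invoke \cref{lem:thickening-and-$2$-join} I need to check that $X_2$ and $Y_2$ arise from cliques of $H_2$, that is, that each is a union $I(S)$ of the cliques replacing the vertices of some clique $S$ of $H_2$. This is the only step that requires a brief appeal to the underlying definition of a pseudo-line strip: the distinguished cliques $A=X_2$ and $B=Y_2$ in such a strip are chosen to be unions of the parts $I(v)$ of the thickening corresponding to vertices of a clique in $H_2$ (they cannot split a part $I(v)$, since all vertices of $I(v)$ play identical roles with respect to the rest of the graph outside the possibly-modified matching edges). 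Hence the hypotheses of \cref{lem:thickening-and-$2$-join} are met.

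Applying \cref{lem:thickening-and-$2$-join}, $G$ is not $\mathcal{G}_{cf}$-critical for \MPdot, contradicting our assumption. The only genuine content beyond bookkeeping is the verification that $X_2$ and $Y_2$ are unions of thickening parts, which I expect to be the mildly delicate step but which is entailed by the pseudo-line strip definition in~\cite{king-reed-claw-free}; no fresh combinatorial argument is required.
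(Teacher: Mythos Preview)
Your proposal is correct and matches the paper's approach: the paper also derives this observation as an immediate consequence of the preceding observation (that every pseudo-line strip is a strict thickening) together with \cref{lem:thickening-and-$2$-join}. Your extra sentence verifying that $X_2=I(A)$ and $Y_2=I(B)$ is appropriate; the paper handles this implicitly via the remark after \cref{lem:thickening-and-$2$-join} that for all the $2$-joins considered the distinguished cliques always have this form.
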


\subsubsection{Gear 2-joins}
The definition of gear $2$-joins was introduced by Galluccio et al.~\cite{galluccio2008gear} and restated by King and Reed~\cite{king-reed-claw-free}.

Let $H$ be a claw-free graph on vertices $\{v_1 ,\dots , v_{10} \}$ with adjacency as follows. The vertices $v_1 ,\dots , v_6$ form an induced cycle of length $6$ and $v_7$ is complete to $\{v_1,v_2,v_3,v_6\}$; $v_8$ is complete to $\{v_3,v_4,v_5,v_6,v_7\}$; $v_9$ is complete to $\{v_1, v_3, v_4, v_6,v_7,v_8\}$; $v_{10}$ is complete to $\{v_2,v_3,v_5,v_6,v_7,v_8\}$.
    There are no other edges in $H$. 
For any $X \subseteq \{v_9, v_{10}\}$ the triple $(H\setminus X, \{v_1, v_2\}, \{v_4, v_5\})$ is a gear ribbon.
Recall that if $G$ admits a generalized $2$-join such that $(G_2 , X_2 , Y_2 )$ is a gear ribbon, then we say that $((X_1 , Y_1 ), (X_2 , Y_2 ))$ is a \emph{ribbon gear $2$-join}.

\begin{figure}[ht!]
    \centering
    \begin{tikzpicture}[scale=0.65]
    \tikzstyle{vertex}=[circle, draw, fill=black,inner sep=2pt, minimum width=4pt]
    \tikzstyle{vertex1}=[rectangle, draw, fill=black, inner sep=2.5pt, minimum width=2.5pt]
    \tikzstyle{vertex2}=[rectangle, draw, inner sep=2.5pt, minimum width=2pt]
    \tikzstyle{vertex3}=[circle, draw, inner sep=2pt]
    \tikzstyle{vertex4}=[circle, draw, inner sep=0pt, fill=white]
    \draw[color=teal, double] (-1,-1) rectangle (1,4);
    \draw[color=teal, double] (9,-1) rectangle (11,4);

    \node[vertex1] (v1) [label=above: $v_1$] at (0,3) {};
    \node[vertex2] (v2) [label=below: $v_2$] at (0,0) {};
    \node[vertex3] (v3) [label=above: $v_3$] at (5,0) {};
    \node[vertex] (v4) [label=below: $v_4$] at (10,0) {};
    \node[vertex3] (v5) [label=above: $v_5$] at (10,3) {};
    \node[vertex2] (v6) [label=below: $v_6$] at (5, 3) {};
    \node[vertex] (v7) [label=above: $v_7$] at (2, 1.5) {};
    \node[vertex1] (v8) [label=above: $v_8$] at (8, 1.5) {};
    

    \node[vertex4] (v9) [label=below: $v_9$] at (5,-1.5) {$\star$};
    \node[vertex4] (v10) [label=above: $v_{10}$] at (5,4.5) {$\star$};

    \draw (v1)--(v2)--(v3)--(v4)--(v5)--(v6)--(v1);
    
    \draw (v6)--(v7)--(v1);
    \draw (v2)--(v7)--(v3);

    \draw (v3)--(v8)--(v4);
    \draw (v5)--(v8)--(v6);
    \draw (v7)--(v8);

    \draw (v3) -- (v9) --(v4);
    \draw (v6)to [bend left](v9) to [bend left](v1);
    \draw (v7)--(v9)--(v8);

    \draw (v2)to [bend left](v10)to [bend left](v3);
    \draw (v5)--(v10)--(v6);
    \draw (v7)--(v10)--(v8);
    
\end{tikzpicture}
    \caption{The gear ribbon with $X=\emptyset$. The shapes at each vertex represent their colors from the proof of \cref{lemma:gear2join}.}
    \label{fig:gearstrip}
\end{figure}
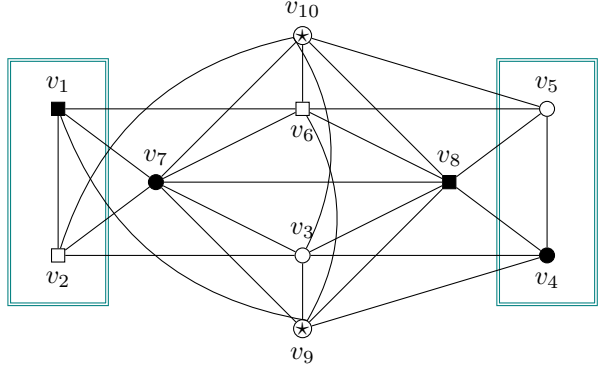

\begin{lem}\label{lemma:gear2join}
    Any graph admitting a gear $2$-join is not  $\mathcal{G}_{cf}$-critical for \MPdot.
\end{lem}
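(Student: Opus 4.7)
The plan is to suppose for contradiction that $G$ admits a gear $2$-join while being $\mathcal{G}_{cf}$-critical for \MPdot, and then to derive a proper $(\mcodeg(G)+3)$-coloring of $G$. By \cref{obs:ribbons}, the join may be taken to be a ribbon gear $2$-join, so $(G_2, X_2, Y_2)$ is a gear ribbon with $X_2 = \{v_1, v_2\}$, $Y_2 = \{v_4, v_5\}$, and $V(G_2) = \{v_1, \dots, v_{10}\} \setminus X$ for some $X \subseteq \{v_9, v_{10}\}$. Setting $T := \mcodeg(G)+3$, vertex-criticality furnishes a proper $T$-coloring $\varphi$ of $G_1$; write $C_X$ and $C_Y$ for the color sets $\varphi$ uses on the cliques $X_1$ and $Y_1$, so that $|C_X|=|X_1|$ and $|C_Y|=|Y_1|$.

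I will extend $\varphi$ to $G_2$ using the symmetric five-coloring hinted at by the shape legend of \cref{fig:gearstrip}, namely
\[v_1,v_8 \mapsto \alpha,\quad v_2, v_6 \mapsto \beta,\quad v_3,v_5 \mapsto \gamma,\quad v_4, v_7 \mapsto \delta,\quad v_9, v_{10} \mapsto \epsilon,\]
restricted to the vertices that survive in $V(G_2)$. Each of these five pairs is non-adjacent in $H$, so a short inspection of the edges of $H$ shows that any pairwise distinct choice of $\alpha, \beta, \gamma, \delta, \epsilon$ yields a proper coloring of $G_2$. Compatibility with $\varphi$ across the generalized $2$-join reduces to $\alpha, \beta \notin C_X$ (for the clique $X_1 \cup X_2$) and $\gamma, \delta \notin C_Y$ (for $Y_1 \cup Y_2$); the remaining vertices $v_3, v_6, v_7, v_8, v_9, v_{10}$ have no neighbors in $G_1$ and so impose no further constraints.

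The quantitative heart of the argument is the bound $|X_1|, |Y_1| \le \mcodeg(G)-1$. Since $v_7 \in V(G_2)$ is adjacent to both $v_1$ and $v_2$ and the only other common neighbors of $v_1, v_2$ in $G$ lie in the clique $X_1$, we have $\codeg(v_1,v_2) = |X_1|+1 \le \mcodeg(G)$; the analogous computation via $v_8$ bounds $|Y_1|$. Consequently the sets of colors avoiding $C_X$ and avoiding $C_Y$ both have size at least $4$, and a short greedy selection (pick $\alpha, \beta$ distinct avoiding $C_X$, then $\gamma, \delta$ distinct avoiding $C_Y$ and $\{\alpha,\beta\}$) always produces the four distinct colors. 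If at least one of $v_9, v_{10}$ remains in $V(G_2)$, then $\codeg_{G_2}(v_7,v_8) \ge 3$, so $T \ge 6$ and $\epsilon$ can be chosen freely outside $\{\alpha,\beta,\gamma,\delta\}$; if both are removed then no $\epsilon$ is needed.

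The main obstacle is the selection step: the weaker bound $|X_1| \le \mcodeg(G)$ arising from $\omega(G)\le \mcodeg(G)+2$ alone would only yield palettes of size $3$ each, and in the worst case $C_X = C_Y$ one could not extract four distinct colors from a combined pool of three. The strengthened bound, coming from the extra common neighbor $v_7$ (respectively $v_8$) within $G_2$, is precisely what makes the symmetric scheme always succeed, producing the desired $(\mcodeg(G)+3)$-coloring of $G$ and contradicting \MPdot.
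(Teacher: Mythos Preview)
Your argument is correct and follows essentially the same route as the paper: reduce to a ribbon via \cref{obs:ribbons}, color $G_1$ by criticality, use the common neighbour $v_7$ (resp.\ $v_8$) to sharpen $|X_1|,|Y_1|\le \mcodeg(G)-1$, assign four distinct colours to $v_1,v_2,v_4,v_5$, and propagate via the non-adjacent pairs $\{v_1,v_8\},\{v_2,v_6\},\{v_3,v_5\},\{v_4,v_7\}$. The only cosmetic difference is in the treatment of $v_9,v_{10}$: the paper bounds $\mcodeg(G)\ge 4$ via $\codeg(v_8,v_9)$ and colours greedily from degree $\le 6$, whereas you observe that $N(v_9)$ and $N(v_{10})$ each carry only the four colours $\alpha,\beta,\gamma,\delta$, so your weaker bound $\mcodeg(G)\ge 3$ (from $\codeg(v_7,v_8)$) already suffices.
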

\begin{proof}
Suppose not. Then by \cref{obs:ribbons}, $G$ admits a generalized $2$-join $((X_1, Y_1), (X_2,Y_2))$ where $(G_2,X_2,Y_2)$ is a gear ribbon.
Then using the notation introduced in the definition of gear ribbon, for some $X \subseteq \{v_9, v_{10}\}$, $(G_2, X_2, Y_2) = (H\setminus X, \{v_1, v_2\}, \{v_4, v_5\})$.

Since $G$ is $\mathcal{G}_{cf}$-critical, there exists a coloring $\varphi$ of $G_1$ with $\mcodeg(G)+3$ colors. Let us show that it is possible to extend $\varphi$ to $G_2$.

Note that both $v_1, v_2$ and $v_4, v_5$ both have a common neighbor in $G_2$.
By definition of $2$-join, the set of neighbors of $v_1, v_2$ in $G_1$ is $X_1$ and the set of neighbors of $v_4, v_5$ is $X_2$.
Each of $v_1, v_2, v_4, v_5$ is adjacent to $\mcodeg(v_1,v_2) -1$ respectively $\mcodeg(v_4,v_5)-1$ colored vertices.
Hence, since we color with $\mcodeg(G) + 3$, vertices there are at least four colors available at each of $v_1, v_2, v_4, v_5$.
Hence, we can iteratively extend $\varphi$ to $v_1, v_2, v_4, v_5$ in such a way that they all receive distinct colors.

We now extend the coloring to the remaining vertices of $v_1, v_2, \dots, v_8$ as follows: $\varphi(v_3)=\varphi(v_5)$, $\varphi(v_6)=\varphi(v_2)$, $\varphi(v_7)=\varphi(v_4)$ and $\varphi(v_8)=\varphi(v_1)$.
(See \cref{fig:gearstrip} for an illustration.)
Note that if $X =\{v_9, v_{10}\}$, then we have fully colored $G$ with $\mcodeg(G) + 3$ colors for a contradiction.

So we may assume that one of $v_9, v_{10} \in V(G_2)$.
In either case, $\mcodeg(G) \geq 4$ as $v_9, v_8$ have four common neighbors in $H \setminus \{v_{10}\}$ and $v_{10}, v_7$ have four common neighbors in $H \setminus \{v_{9}\}$.
So in particular, $\mcodeg(G) + 3 \geq 7$.
However, $\deg_H(v_9) = \deg_H(v_{10}) \leq 6$, so we can greedily extend $\varphi$ to $v_9, v_{10}$ if necessary.
Therefore, $G$ is $(\mcodeg(G) + 3)$-colorable, a contradiction.
\end{proof}

\subsection{Icosahedral thickenings}
\label{sec:icosahedron}
By \cref{lem:thickenings}, we know that if $G$ is a strict thickening of a graph $G'$ then $G$ is not $\mathcal{G}_{cf}$-critical for \MPdot. Hence, we only need to consider the cases where $G$ is an induced subgraph of the icosahedral graph.

\begin{lem}\label{lem:icosahedral}
    If $H$ is an induced subgraph of the icosahedral graph, then $\chi (H) \leq \mcodeg (H) +3$.
\end{lem}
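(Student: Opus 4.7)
The plan is to split on the value of $\mcodeg(H)$ using two easily-verified facts about the icosahedral graph $I$. First, $I$ is planar (indeed it is the $1$-skeleton of a Platonic solid), so every induced subgraph $H$ is planar as well. Second, $\chi(I) = 4$ and $\mcodeg(I) = 2$: since $I$ is vertex-transitive and $5$-regular with every edge lying in exactly two triangles, the codegree of adjacent vertices is $2$, and a routine direct check on the antiprism realization (top apex, top pentagon, bottom pentagon, bottom apex) shows that non-adjacent pairs have at most $2$ common neighbors (antipodal pairs have $0$). Since $H$ is an induced subgraph of $I$, we automatically have $\chi(H) \le \chi(I) = 4$.

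Now fix an induced subgraph $H$ of $I$. If $\mcodeg(H) \ge 1$, then $\chi(H) \le 4 \le \mcodeg(H) + 3$, as required. It remains to handle the case $\mcodeg(H) = 0$. In this case no two distinct vertices of $H$ share a common neighbor, so $H$ cannot contain a triangle: in any triangle $uvw$ the vertex $w$ would witness $\codeg(u,v) \ge 1$. Thus $H$ is triangle-free and planar, and Gr\"otzsch's theorem yields $\chi(H) \le 3 = \mcodeg(H) + 3$, completing the proof.

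The only nontrivial content is the check of the two parameters $\chi(I) = 4$ and $\mcodeg(I) = 2$; after that the case split is immediate. One could avoid invoking Gr\"otzsch's theorem by a small case analysis (an induced subgraph $H$ of $I$ with $\mcodeg(H)=0$ consists of a matching plus some isolated vertices together with a few $C_5$'s and paths, all of which are $3$-colorable), but Gr\"otzsch's theorem makes this step completely transparent and is arguably the cleanest way to conclude.
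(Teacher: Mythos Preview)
Your proof is correct and follows the same case split as the paper: both use $\chi(I) \le 4$ to handle $\mcodeg(H) \ge 1$, and both reduce the $\mcodeg(H) = 0$ case to showing $\chi(H) \le 3$. The difference lies only in that final step. The paper argues by elementary counting that, since every vertex of $I$ lies in five triangles and each edge in exactly two, removing a neighbor of $w$ kills at most two triangles through $w$, so a triangle-free induced subgraph must have lost at least three neighbors at every surviving vertex, giving $\Delta(H) \le 2$. You instead invoke planarity and Gr\"otzsch's theorem. Your route is quicker to state but imports a substantial external result, whereas the paper's argument is entirely self-contained and actually yields a sharper structural conclusion about $H$.

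One correction to your closing parenthetical: a graph with $\mcodeg(H) = 0$ cannot contain a $C_5$ or any path on three vertices, since two vertices at distance two would share a common neighbor. In fact $\mcodeg(H) = 0$ forces $\Delta(H) \le 1$ outright---if some vertex $v$ had two neighbors $u,w$, then $v$ would witness $\codeg(u,w) \ge 1$---so such an $H$ is already just a matching plus isolated vertices and $\chi(H) \le 2$. This renders both Gr\"otzsch and the paper's triangle-count unnecessary for this case, though of course neither argument is incorrect.
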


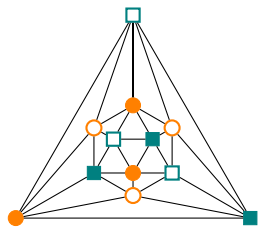
\begin{figure}[ht!]
    \centering
    \begin{tikzpicture}[scale=0.3, rotate=30]
    \tikzstyle{vertex1}=[rectangle, draw=teal, fill=teal, inner sep=2.5pt, minimum width=2pt]
    \tikzstyle{vertex2}=[rectangle, draw=teal, fill=white, inner sep=2.5pt, minimum width=2pt, thick]
    \tikzstyle{vertex3}=[circle, draw=orange, fill=orange, inner sep=2pt, minimum width=2pt]
    \tikzstyle{vertex4}=[circle, draw=orange, fill=white, inner sep=2pt, minimum width=2pt, thick]
    
    \node[vertex1] (w1) at (0:1) {};
    \node[vertex2] (w2) at (120:1) {};
    \node[vertex3] (w3) at (240:1) {};
    
    \node[vertex4] (v1) at (0:2) {};
    \node[vertex3] (v2) at (60:2) {};
    \node[vertex4] (v3) at (120:2) {};
    \node[vertex1] (v4) at (180:2) {};
    \node[vertex4] (v5) at (240:2) {};
    \node[vertex2] (v6) at (300:2) {};
    
    \node[vertex2] (u1) at (60:6) {};
    \node[vertex3] (u2) at (180:6) {};
    \node[vertex1] (u3) at (300:6) {};

    \draw (u1)--(v2)--(w1)--(w3)--(w2)--(w1)--(v1)--(u1)--(v3)--(u2)--(v5)--(u3)--(v6)--(w3)--(v4)--(w2)--(v2)--(u1);
    \draw (w1)--(v6)--(v5)--(v4)--(v3)--(v2)--(v1)--(v6);
    \draw (v4)--(u2)--(u3)--(u1)--(u2);
    \draw (v5)--(w3);
    \draw (v3)--(w2);
    \draw (v1)--(u3);
\end{tikzpicture}
    \caption{Icosahedral graph with a $4$-coloring.}
    \label{fig:icosahedron}
\end{figure}

\begin{proof}
    First note that the icosahedron is $4$-colorable as shown in \Cref{fig:icosahedron}. Therefore, if $\mcodeg (H)\geq 1$, then $\chi(H)\leq 4 \leq \mcodeg +3$, as desired.

    Suppose now that $\mcodeg(H)= 0$. Then $H$ is triangle-free. 
    Let $u$ be a vertex of the icosahedron and $v\in N(u)$. Observe that in the icosahedron, any vertex is contained in five triangles and any edge is contained in exactly two triangles. Therefore, if $u$ is a vertex in a subgraph of the icosahedron, removing a vertex in its neighborhood will delete at most two triangles. So for any vertex $w\in H$, at least three neighbors must have been removed from the icosahedron to delete the five triangles it is contained in. Thus $|N(w)|\leq 5 - 3$ for any vertex $w\in H$, and we conclude that $\chi (H)\leq 3 = \mcodeg (H) +3$.
\end{proof}

\section{Adaptation to edge-codegree}\label{sec:adaptation}

In this section, we explain what modifications are necessary for the following strengthened form of \cref{thm:maincf}. 
Recall that the maximum edge-codegree of a given graph $G$, denoted by $\Delta_e(G)$, is the maximum size of a collection of triangles in $G$ all containing some common edge.
\maincfedge*

\noindent
To prove \cref{thm:maincfedge}, the overall strategy is unchanged: we go though the graph types in the structural description of claw-free graphs given in \cref{thm:structure-king-reed} in the same order as for \cref{thm:maincf}, except this time we show that none of them contains a critical graph for the edge-codegree version of \MPdot.

\begin{propertye} 
    $\chi(G)> \Delta_e (G) + 3$. 
\end{propertye}

First, note that $\Delta_e(G)$ also satisfies $\Delta_e(G)+2 \geq \omega(G)$. Hence, all lower bounds on the codegree that come from cliques remain valid.
As we sometimes work in the complement, we also use $\overline{\Delta_e}(G)$ to denote $\Delta_e({\overline{G}})$. 

\subsection*{Defuzzification}
We remark that no vertex-critical graph for \MPe contains a clique cutset so \cref{lem:no-clique-cutset} holds for \MPe as well.

However, a vertex-critical graph for \MPe can contain a vertex dominating some other vertex as long as these vertices are not adjacent. Hence, \cref{lem:nodominatingvertices} does not hold and we need the following adapted form.

\begin{lem}[cf.~\cref{lem:nodominatingvertices}]  If $G$ is vertex-critical for \MPe and $x$ and $y$ are adjacent, then $x$ does not dominate $y$.
\end{lem}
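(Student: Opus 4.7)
The plan is to mimic the proof of \cref{lem:nodominatingvertices} nearly verbatim, with the single crucial change that the hypothesis $xy \in E(G)$ is precisely what we need to bound $\codeg(x,y)$ by $\Delta_e(G)$ rather than only by $\mcodeg(G)$.

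Suppose for a contradiction that $G$ is vertex-critical for \MPe and that there exist adjacent $x,y \in V(G)$ with $N(y) \subseteq N[x]$. Since $y \in N(x)$ and $y \notin N(y)$, every neighbor of $y$ other than $x$ must lie in $N(x) \cap N(y)$, so
\[
\deg(y) \;\le\; \codeg(x,y) + 1.
\]
Here is the only point where the edge-codegree version differs: because $xy \in E(G)$, the pair $(x,y)$ is an edge, and hence $\codeg(x,y) \le \Delta_e(G)$ by definition of $\Delta_e$. Therefore $\deg(y) \le \Delta_e(G) + 1$.

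Next, observe that $\Delta_e$ is non-increasing under taking induced subgraphs (the collection of triangles sharing any fixed edge can only shrink when vertices are deleted), so $\Delta_e(G-y) \le \Delta_e(G)$. By vertex-criticality, the proper induced subgraph $G-y$ fails \MPedot, giving a proper coloring of $G-y$ with at most $\Delta_e(G-y)+3 \le \Delta_e(G)+3$ colors. Since $\deg(y) \le \Delta_e(G)+1 < \Delta_e(G)+3$, we may extend this coloring to $G$ by assigning $y$ the least color not appearing on its neighborhood. This yields $\chi(G) \le \Delta_e(G)+3$, contradicting that $G$ satisfies \MPedot.

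No step is really hard; the whole content of the adapted lemma is to flag that, in the edge-codegree setting, the same domination argument works only under the extra hypothesis of adjacency, and that this hypothesis is used exactly once in the passage from $\codeg(x,y)$ to $\Delta_e(G)$. The rest of the proof is identical to that of \cref{lem:nodominatingvertices}.
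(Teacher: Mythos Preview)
Your proof is correct and follows essentially the same approach as the paper's own proof. The paper's version is terser (it compresses the degree bound and the extension step into two sentences), but the logic is identical: use $xy\in E(G)$ to replace $\mcodeg(G)$ by $\Delta_e(G)$ in the bound $\deg(y)\le\codeg(x,y)+1$, then extend a coloring of $G-y$ greedily.
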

\begin{proof}
    Suppose $G$ is vertex-critical for \MPe and there exist adjacent vertices $x,y$ with $N(y) \subseteq N[x]$.
    Then $\deg(y) \leq \codeg(x,y) +1 \leq \Delta_e(G) + 1$. In particular, any coloring of $G-y$ with at least $\Delta_e(G)+2$ 
    colors can be extended to $G$, by using the least available color for $y$. By vertex-criticality of $G$, it follows that $\chi(G)\le \Delta_e(G-y)+2\leq \Delta_e(G)+3$, a contradiction.
\end{proof}

\Cref{lem:nodominatingvertices} is used in the proof of \cref{lem:trivial-homog-pair-of-cliques}, in which a vertex dominates another to reach a contradiction. However, these vertices are in the same clique, and hence the result remains true for \MPe and the main result concerning homogeneous pairs of cliques still holds.
\begin{lem}[cf.~\cref{lem:no-homog-pair-of-cliques}]\label{lem:no-homog-pair-of-cliques edge}
If $G$ is a $\mathcal{G}_{cf}$- or $\mathcal{G}_{ql}$-critical graph for \MPedot, then $G$ has no pair of homogeneous cliques.
\end{lem}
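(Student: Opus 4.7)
The plan is to mirror the proof of \cref{lem:no-homog-pair-of-cliques} step-by-step, adjusting only the trivial case to use the weaker dominating-vertex lemma available for \MPedot. The non-trivial case goes through without change, because the black-box lemmas of Chudnovsky--Fradkin and King--Reed already produce a proper subgraph $H$ of $G$ in the same hereditary class with $\chi(H)=\chi(G)$, and $\Delta_e$ is monotone under taking subgraphs.

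First I would prove the analog of \cref{lem:trivial-homog-pair-of-cliques} for \MPedot: if $G$ is vertex-critical for \MPe and admits a trivial homogeneous pair of cliques $(A,B)$, we derive a contradiction. As in the proof of \cref{lem:trivial-homog-pair-of-cliques}, the absence of an induced $C_4$ in the graph induced on $A\cup B$ means that for any two vertices $x_1,x_2$ both in $A$ or both in $B$, one dominates the other. Since $|A|+|B|\ge 3$ and no vertex outside $A\cup B$ is mixed on $A$ or on $B$, at least one of $A,B$ contains two distinct vertices and we obtain $x_1,x_2$ in the same clique with $x_1$ dominating $x_2$. Crucially, $x_1$ and $x_2$ are adjacent (they lie in a common clique), so the adapted dominating-vertex lemma (the adjacent-only version stated above the claim) applies and yields the contradiction.

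Next I would address the non-trivial case. Suppose $G$ is $\mathcal{G}_{cf}$-critical (resp.\ $\mathcal{G}_{ql}$-critical) for \MPe and admits a non-trivial homogeneous pair of cliques. By \cref{lem: No non triv hom pair claw-free} (resp.\ \cref{thm:nontrivialpairOG}), there is a proper subgraph $H$ of $G$ with $H\in\mathcal{G}_{cf}$ (resp.\ $H\in\mathcal{G}_{ql}$) and $\chi(H)=\chi(G)$. Because $H$ is a subgraph of $G$, every edge of $H$ is an edge of $G$ and codegrees can only drop, so $\Delta_e(H)\le\Delta_e(G)$. Hence
\[
\chi(H)=\chi(G)>\Delta_e(G)+3\ge\Delta_e(H)+3,
\]
so $H$ satisfies \MPedot, contradicting the $\mathcal{G}$-criticality of $G$.

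Combining the two cases gives the lemma. I do not expect a serious obstacle here: the only subtle point is verifying that in the trivial case the dominating vertices produced are automatically adjacent, which is exactly why the weakened \MPe-version of the dominating-vertex lemma still suffices; and monotonicity of $\Delta_e$ under subgraphs (as opposed to only induced subgraphs) is what makes the black-box lemmas applicable in the non-trivial case.
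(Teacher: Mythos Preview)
Your proposal is correct and follows essentially the same approach as the paper: the paper's justification for this lemma is precisely the observation that in the proof of \cref{lem:trivial-homog-pair-of-cliques} the dominating pair lies in a common clique (hence is adjacent, so the weakened \MPe dominating-vertex lemma applies), together with the unchanged use of \cref{thm:nontrivialpairOG} and \cref{lem: No non triv hom pair claw-free} for the non-trivial case. You have spelled out the monotonicity of $\Delta_e$ under subgraphs a bit more explicitly than the paper, but the argument is the same.
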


Later \cref{lem:nodominatingvertices} is used to obtain \cref{lem:thickenings} from \cref{obs:thickening-means-homog-or-twins,lem:no-homog-pair-of-cliques}. This relies on the existence of twins, which are vertices dominating each other, and a closer look at the proof of \cref{obs:thickening-means-homog-or-twins} shows that those twins are adjacent. Therefore the analogous result holds for \MPedot.

\begin{cor}[cf.~\cref{lem:thickenings}]\label{lem:thickenings edge}
  If $G$ is a $\mathcal{G}_{cf}$-critical or $\mathcal{G}_{ql}$-critical for \MPedot, then $G$ is not a strict thickening of some graph $G_0$.
\end{cor}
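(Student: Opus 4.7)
The plan is to mirror the proof of \cref{lem:thickenings} verbatim, substituting the \MPe-analogues of the three ingredients it rests on. So I would suppose, for a contradiction, that $G$ is $\mathcal{G}_{cf}$-critical or $\mathcal{G}_{ql}$-critical for \MPedot\ and is a strict thickening of some graph $G_0$, and then apply \cref{obs:thickening-means-homog-or-twins} to conclude that $G$ contains either a pair of twins $a,b \in I(v)$ for some $v \in V(G_0)$, or a homogeneous pair of cliques $(I(v), I(w))$ for some edge $vw \in M$.

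In the second case, the contradiction is immediate from \cref{lem:no-homog-pair-of-cliques edge}, which is the \MPe-version already established just above in the excerpt. So the only case requiring care is the twin case, and this is precisely where the proof of the \MP-version uses \cref{lem:nodominatingvertices}, which fails outright for \MPedot. The key observation I would invoke is that twins $a,b$ with $N[a] = N[b]$ are necessarily adjacent: since $a \in N[a] = N[b]$ and $a \neq b$, we have $ab \in E(G)$. (One can also read this off directly from the construction in the proof of \cref{obs:thickening-means-homog-or-twins}, which produces $a,b$ inside a common clique $I(v)$.) Hence the adapted domination lemma (the adjacent-vertex version of \cref{lem:nodominatingvertices} proved just before \cref{lem:no-homog-pair-of-cliques edge}) applies and yields the desired contradiction.

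There is essentially no obstacle: the whole point of the adaptations in \cref{sec:adaptation} is to arrange that the relevant domination statement still holds whenever the two dominating/dominated vertices are adjacent, and the structural source of domination inside a strict thickening (namely twins inside a single clique $I(v)$) always produces an adjacent pair. I would therefore write the proof as a short two-sentence deduction citing \cref{obs:thickening-means-homog-or-twins}, \cref{lem:no-homog-pair-of-cliques edge}, and the adjacent-case domination lemma, in direct parallel to the proof of \cref{lem:thickenings}. No new calculation or case analysis is needed.
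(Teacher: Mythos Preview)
Your proposal is correct and matches the paper's own justification essentially verbatim: the paper also notes that \cref{lem:thickenings} was obtained from \cref{obs:thickening-means-homog-or-twins} together with \cref{lem:no-homog-pair-of-cliques}, and that the twins produced in the proof of \cref{obs:thickening-means-homog-or-twins} lie in the same clique $I(v)$ and are therefore adjacent, so the adjacent-case domination lemma applies. Your additional remark that adjacency already follows from the definition $N[a]=N[b]$ is a nice touch, but otherwise there is no difference in approach.
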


Last, this cascades to the proof for generalized $2$-joins using the previous adapted lemmas. 
\begin{lem}[cf.~\cref{lem:thickening-and-$2$-join}]\label{ribbon 2 join edge}
Suppose $G$ admits a generalized $2$-join $((X_1,Y_1),(X_2,Y_2))$ such that $G_2$ is a strict thickening of some graph $H_2$ and such that $X_2 = I(A)$ and $Y_2 =I(B)$ where $A,B$ are cliques of $H_2$. Then $G$ is neither $\mathcal{G}_{cf}$-critical nor $\mathcal{G}_{ql}$-critical for \MPedot.
\end{lem}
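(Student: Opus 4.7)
The plan is to mimic the proof of \cref{lem:thickening-and-$2$-join} almost verbatim, replacing the appeals to \cref{lem:nodominatingvertices,lem:no-homog-pair-of-cliques} with their \MPe counterparts. The only subtlety is that the general non-adjacent domination lemma fails for \MPe, so I need to check that the dominating pair produced by the thickening is in fact a pair of \emph{adjacent} twins.

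First I would invoke \cref{obs:thickening-means-homog-or-twins} on $G_2$, which yields either a pair of twins $a,b \in I(v)$ for some $v \in V(H_2)$, or a homogeneous pair of cliques $(I(v),I(w))$ arising from an edge $vw$ in the matching $M$ used to build the thickening. Crucially, in the first case $a$ and $b$ lie in the same clique $I(v)$, so they are \emph{adjacent} twins in $G_2$, which is exactly the hypothesis needed by the \MPe-adapted version of \cref{lem:nodominatingvertices}.

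Next I would transfer each of these structures from $G_2$ into the ambient graph $G$ exactly as in the proof of \cref{lem:thickening-and-$2$-join}. Because $X_2 = I(A)$ and $Y_2 = I(B)$ with $A,B$ cliques of $H_2$, each clique $I(u)$ is contained in $X_2$, contained in $Y_2$, or disjoint from $X_2 \cup Y_2$; consequently every vertex of $G_1$ is either complete or anticomplete to each $I(u)$ appearing in the statement. This shows that adjacent twins in $I(v)$ remain adjacent twins in $G$, and that a homogeneous pair of cliques $(I(v),I(w))$ in $G_2$ remains a homogeneous pair of cliques in $G$.

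Finally, the first case contradicts the adapted form of \cref{lem:nodominatingvertices} stated just above, since the twins $a,b$ are adjacent and each dominates the other; the second case contradicts \cref{lem:no-homog-pair-of-cliques edge}. Either way, $G$ cannot be $\mathcal{G}_{cf}$-critical or $\mathcal{G}_{ql}$-critical for \MPedot. I do not expect a genuine obstacle here: the only thing to verify carefully is that the twins supplied by \cref{obs:thickening-means-homog-or-twins} are adjacent, which follows immediately from their membership in the common clique $I(v)$.
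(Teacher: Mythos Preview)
Your proposal is correct and follows exactly the approach the paper indicates: it says the result ``cascades to the proof for generalized $2$-joins using the previous adapted lemmas,'' and you have supplied precisely those details, correctly noting that the twins from \cref{obs:thickening-means-homog-or-twins} lie in the clique $I(v)$ and are therefore adjacent, which is what the \MPe version of \cref{lem:nodominatingvertices} requires.
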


\subsection*{Quasi-line graphs}
We next describe how to prove the edge-codegree strengthening for quasi-line graphs.
\begin{thm}[cf.~\cref{thm:mainquasiline}]\label{delta_e quasiline} If $G$ is a quasi-line graph, then $\chi(G) \leq \Delta_e(G)+3$.
\end{thm}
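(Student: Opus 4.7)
The plan is to mirror the proof of \cref{thm:mainquasiline} essentially verbatim, using \cref{thm: structure quasiline} as the road map, and checking that each instance where $\mcodeg(G)$ was invoked as an upper bound can be replaced by $\Delta_e(G)$. By the defuzzification adaptations already established (in particular \cref{lem:no-homog-pair-of-cliques edge} and the edge-codegree analog of \cref{lem:no-clique-cutset}), we may assume $G$ contains neither a clique cutset nor a non-trivial homogeneous pair of cliques, so \cref{thm: structure quasiline} reduces us to three remaining cases: $G$ is a line graph of a multigraph, $G$ is a circular interval graph, or $G$ admits a canonical interval $2$-join.

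The key observation underpinning all three cases is the following dichotomy: every inequality of the form $(\cdot) \leq \mcodeg(G) + c$ in the original proofs arises from one of two sources. Either (a) the bound is derived from the clique number via $\omega(G) - 2 \leq \mcodeg(G)$; or (b) the bound is a codegree $\codeg(u,v)$ for a pair $u,v$ that is, in context, actually adjacent. In case (a), we use instead the inequality $\omega(G) - 2 \leq \Delta_e(G)$, which holds because any two vertices of a maximum clique are adjacent and share the remaining $\omega(G)-2$ clique vertices as common neighbors. In case (b), we use $\codeg(u,v) \leq \Delta_e(G)$ directly from the definition of $\Delta_e$.

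Concretely, for line graphs of multigraphs (\cref{lem:linegraph_multigraphs}), the Vizing bound $\chi(G) \leq \omega(G) + 1$ combines with $\omega(G) \leq \Delta_e(G) + 2$; and for loop/parallel vertices $v$ added from $H \setminus H'$, either $N(v)$ is contained in the clique of edges incident to a single vertex (bounded by $\omega - 1 \leq \Delta_e + 1$), or $v$ has an adjacent twin $e$ with $\codeg(v,e) \leq \Delta_e(G)$. For circular interval graphs (\cref{lem:circular_interval_graphs}), the greedy coloring argument uses $|I^L_w| \leq \mcodeg(G)+2$, which becomes $|I^L_w| \leq \Delta_e(G)+2$ since $I^L_w$ is a clique containing $w$; and it uses $\codeg(v,w) \leq \mcodeg(G)$ precisely when $w \in I^L_v$, i.e.\ when $v$ and $w$ lie in a common interval and hence are adjacent. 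For canonical interval $2$-joins (\cref{lem:canonical interval 2 join}), every inequality either bounds $\omega(G)-1$ by a clique-number term, or bounds $\codeg(u,b)$ where $u,b \in Y_2$ lie in the clique $Y_2$ and so are adjacent.

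The anticipated obstacle is minor but requires care: one must audit each inequality in the three original proofs and confirm that the relevant pair is adjacent whenever we wish to invoke $\Delta_e$ instead of $\mcodeg$. In each proof, the critical codegree term involves two vertices that belong to a common clique (either an interval in the circular interval case, a clique from a clique cover, a clique endpoint of the $2$-join, or an endpoint pair of parallel edges in a multigraph); this common-clique structure is exactly what makes the substitution legitimate. Once these checks are in place, the proof of \cref{delta_e quasiline} follows by the identical case split and gives the desired strengthening $\chi(G) \leq \Delta_e(G) + 3$.
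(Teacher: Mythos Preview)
Your proposal is correct and follows essentially the same route as the paper: reduce via the edge-codegree defuzzification lemmas to the three outcomes of \cref{thm: structure quasiline}, then observe that every codegree bound in the proofs of \cref{lem:linegraph_multigraphs,lem:circular_interval_graphs,lem:canonical interval 2 join} comes either from $\omega(G)-2$ or from a pair of vertices lying in a common clique (hence adjacent), so $\Delta_e$ may replace $\mcodeg$ throughout. The paper's own argument is precisely this audit, stated more tersely.
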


Since $\Delta_e(L(G))+2\geq \omega(L(G)) \geq \Delta(G)$, Vizing's theorem gives the bound for line graphs of simple  graphs. Moreover, using the same arguments as in \cref{lem:linegraph_multigraphs}, we can extend the result to line graphs of multigraphs. Also the proof of \cref{lem:circular_interval_graphs} directly extends to the edge-codegree setting as we only look at the codegree of vertices that lie in a common interval implying that they are adjacent. The same holds for the proof of \cref{lem:canonical interval 2 join}, where we prove that graphs admitting a canonical interval $2$-join are not critical for \MPedot. This yields the proof of \cref{delta_e quasiline}. 

\subsection*{Claw-free graphs}
We now extend this to claw-free graphs. By \cref{thm:structure-king-reed} and \cref{lem:thickenings edge}, we need to consider antiprismatic graphs, three-cliqued graphs, several kinds of graphs admitting a ribbon $2$-join, and icosahedral graphs.

\subsubsection*{Antiprismatic graphs}
We use the graphs where $\alpha(G)=2$ as a base case for extending our results to antiprismatic graphs. However, \cref{lem: bounding anti codeg} does not hold if we replace $\Delta_2$ with $\Delta_e$. If $H=\overline{G}$ is for example $C_5$, then $\Delta_e(H)=0$, whereas $\frac{V(H)}{2}-2=0.5$. Hence, we need to relax the bound slightly and show that $\overline{\chi}(H)\leq \overline{\Delta_e}(H)+3$. 

\begin{lem}[cf.~\cref{lem: bounding anti codeg}]\label{lem: bounding anti edge codeg} If $H$ is a triangle-free graph, then $\overline{\Delta_{e}}(H) \geq \frac{|V(H)|}{2}- 3$. 
\end{lem}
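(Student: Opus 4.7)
I plan to mirror the proof of \cref{lem: bounding anti codeg}, carefully adapting it to our setting where the hypothesis only constrains non-adjacent pairs. Let $n = |V(H)|$. When $n \leq 6$, the bound is vacuous since $\overline{\Delta_e}(H) \geq 0 \geq \frac{n}{2} - 3$, so I will assume $n \geq 7$ and, for a contradiction, that $\acodeg(u,v) \leq \frac{n}{2} - \frac{7}{2}$ for every non-adjacent pair $u, v$ in $H$; equivalently, $|N(u) \cup N(v)| \geq \frac{n}{2} + \frac{3}{2}$ whenever $\{u, v\}$ is a non-edge.

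The core step is to locate a $C_4$ in $H$. Given a $4$-cycle $v_1 v_2 v_3 v_4$ in $H$, triangle-freeness forces both diagonals $v_1 v_3$ and $v_2 v_4$ to be non-edges and makes the sets $N(v_1) \cup N(v_3)$ and $N(v_2) \cup N(v_4)$ disjoint (any shared vertex would create a triangle with an edge of the cycle). Applying the hypothesis to the two non-edges then yields
\[
n \;\geq\; |N(v_1) \cup N(v_3)| + |N(v_2) \cup N(v_4)| \;\geq\; 2\bigl(\tfrac{n}{2} + \tfrac{3}{2}\bigr) \;=\; n + 3,
\]
a contradiction; so I need only produce a $C_4$.

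To produce a $C_4$, I would suppose instead that $H$ is $C_4$-free. Then any two vertices share at most one common neighbor, so $d(u) + d(v) \geq \frac{n+3}{2}$ for every non-edge. Summing over non-edges and applying Cauchy--Schwarz gives $e(H) \geq \frac{n(n+3)}{8}$, while the Kővári--Sós--Turán bound forces $e(H) \leq \frac{n(1+\sqrt{4n-3})}{4}$. Combining yields $n^2 - 14 n + 13 \leq 0$, forcing $n \leq 13$.

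The residual cases $7 \leq n \leq 13$ I plan to close via Ramsey-type arguments: $R(3,3) = 6$ handles $n \in \{7, 8\}$ by producing a triangle in $\overline H$ (so $\overline{\Delta_e}(H) \geq 1$), and $R(3,4) = 9$ handles $n \in \{9, 10\}$ by producing a $K_4$ in $\overline H$ (so $\overline{\Delta_e}(H) \geq 2$). For $n \in \{11, 12, 13\}$, I will again apply $R(3,4) = 9$ to obtain an independent set $\{a,b,c,d\}$ in $H$; the contradiction hypothesis on each of the six pairs in $\{a,b,c,d\}$ will then force every (or almost every) vertex of $V \setminus \{a,b,c,d\}$ to be adjacent to at least three of $\{a,b,c,d\}$. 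A straightforward averaging gives some member of $\{a,b,c,d\}$ with $H$-degree at least $6$, whose $H$-neighborhood is an independent set of size $6$, yielding $\overline{\Delta_e}(H) \geq 4$ and contradicting the hypothesis throughout this range. The main obstacle I anticipate is precisely this last piece for $n \in \{11, 12, 13\}$, which sits just outside the reach of direct Ramsey arguments ($R(3,5) = 14$) and requires the neighborhood-counting step described above.
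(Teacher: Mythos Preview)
Your argument is correct, and the $C_4$ endgame is identical to the paper's. Where you diverge is in producing a $C_4$ in the first place. The paper handles the $C_4$-free case directly and uniformly in $n$: either $\Delta(H)\le 2$, in which case any non-edge has $\acodeg \ge n-6 > \tfrac{n}{2}-3$; or some vertex has three pairwise non-adjacent neighbours $v,w,x$, and picking any further vertex $y$ (by $C_4$-freeness $y$ misses at least two of them, say $w,x$) gives two non-edges $vw$ and $xy$ whose neighbourhood unions sum to at least $n+3$ while both omitting $w,x$, so at least five vertices lie in both unions and pigeonhole yields a $C_4$. No edge-counting, no small-$n$ residue.

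Your route via the degree-sum inequality, Cauchy--Schwarz, and the K\H{o}v\'ari--S\'os--Tur\'an bound is a genuinely different and valid reduction to $n\le 13$, after which Ramsey numbers dispatch $n\le 10$ cleanly. One caution: the ``straightforward averaging'' for $n=13$ is tighter than your phrasing suggests. The hypothesis gives $\sum_{z}\binom{4-k(z)}{2}\le 6$ over the nine outside vertices, and one must verify that this forces $\sum_z k(z)\ge 21$ (the minimum is attained with six vertices at $k=2$ and three at $k=3$), so that some $a_i$ has degree at least $\lceil 21/4\rceil = 6$. This works, but only just; it is worth spelling out. The trade-off is that the paper's pigeonhole step is slicker and self-contained, whereas your approach illustrates how the problem connects to classical extremal and Ramsey bounds.
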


\begin{proof} Let $n=|V(H)|$. The statement is trivial if $n \leq 6$.
    Suppose for a contradiction that every pair of vertices $v,w \in V(H)$ such that $vw \not \in E(H)$ satisfies $\acodeg(v,w) \leq \frac{n}{2}-\frac{7}{2}$. Then for all $vw \not\in E(H)$ 
     \[|N(v) \cup N(w)| \geq
    (n-2)-\left(\frac{n}{2}-\frac{7}{2}\right)=\frac{n}{2}+\frac{3}{2}.\]

    We will first prove the result for all $H$ that are also $C_4$-free, i.e.~all $H$ with girth at least 5.
    If every vertex $v \in H$ has degree at most 2, then for all $vw \not \in E(H)$, we have $\acodeg(v,w)\geq (n-2)-|N(v)|-|N(w)| \geq n -2 -2 -2=n-6 > \frac{n}{2} -3$ as $n > 6$.
    Hence, suppose there exists a vertex $u \in H$ of degree at least 3.
    Let $v,w,x$ be three neighbors of $u$.
    Since $H$ is triangle-free, these three vertices are a stable set. Let $y \neq u,v,w,x$ be any other vertex in $H$. 
    Then since $H$ is $C_4$-free, $y$ is adjacent to at most one of $v,w,x$. 
    Suppose without loss of generality that $y$ is not adjacent to both $w$ and $x$.
    Then $|N(v) \cup N(w)| + |N(x) \cup N(y)| \geq n+3$. 
    
    Moreover, notice that both $w$ and $x$ are not in either set. Hence, at least five vertices appear in both the sets, $N(u) \cup N(w)$ and $N(x) \cup N(y)$.
    Since each of these five vertices must be adjacent to $u$ or $w$ and be adjacent $x$ or $y$, by the pigeonhole principle some two vertices must have two common neighbors in $v,w,x,y$.
    Hence, $H$ contains $C_4$ as a subgraph, a contradiction.
       
    Let us now prove the result if $H$ contains a $C_4$. Let $v_1,v_2,v_3,v_4$ be such that $v_iv_{i+1} \in E(H)$ for all $i=1,\ldots, 4$ (where $v_5=v_1$). Then, since $H$ is triangle-free, we have that $N(v_i) \cap N(v_{i+1}) = \emptyset$. We reach a contradiction as follows
    \begin{align*} n &\geq |N(v_1)\cup N(v_2) \cup N(v_3) \cup N(v_4)|= |N(v_1) \cup N(v_3)| + |N(v_2)\cup N(v_4)|\\ &\geq \frac{n}{2}+\frac{3}{2} + \frac{n}{2}+\frac{3}{2} = n+3. \qedhere\end{align*}
\end{proof}

Since multiple statements depend on \cref{lem: bounding anti codeg}, we need to adapt those results according to the relaxed bound. However, a careful look at their proofs show that they remain the same as the original ones by replacing $\Delta_e$ with $\mcodeg$ and replacing the constant 2 with the constant 3.

\begin{thm}[cf.~\cref{thm:alpha2_chiBounded}]
    If $G$ is a graph such that $\alpha(G)\le2$, then $\chi(G) \leq \Delta_e(G)+3$.
    Equivalently, if $H$ is triangle-free, then $\overline{\chi}(H) \leq \overline{\Delta_{e}}(H)+3$.
\end{thm}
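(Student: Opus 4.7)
The plan is to pass to the complement: set $H = \overline{G}$, which is triangle-free because $\alpha(G) \le 2$, and establish $\overline{\chi}(H) \le \overline{\Delta_e}(H) + 3$. Exactly as at the start of Subsection~4.1, since $H$ is triangle-free, every clique cover of $H$ uses only cliques of size one or two, so $\overline{\chi}(H) = |V(H)| - m(H)$. Therefore the goal reduces to showing
\[
|V(H)| \;\le\; m(H) + \overline{\Delta_e}(H) + 3.
\]
Fix a maximum matching $M$ of $H$ and let $X = V(H) \setminus V(M)$; then $X$ is an independent set and $|V(H)| = 2m(H) + |X|$.

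If $|X| \ge 2$, pick any $v,w \in X$. Since $X$ is independent, $vw \notin E(H)$, so $vw \in E(\overline{H})$; in particular, $\acodeg^H(v,w) = \codeg^{\overline{H}}(v,w)$ is the codegree in $\overline{H}$ of one of its edges, and hence contributes to $\overline{\Delta_e}(H)$. The swap argument from the proof of \cref{lem: no perfect matching} now applies unchanged: for each edge $ab \in M$, at least one of $a,b$ is a common non-neighbor of $v$ and $w$ (otherwise one could swap $ab$ out for two edges $av, bw$ or $aw, bv$ to enlarge $M$, using triangle-freeness to pin down the adjacencies). Consequently
\[
\overline{\Delta_e}(H) \;\ge\; \acodeg(v,w) \;\ge\; m(H) + |X| - 2,
\]
which gives $|V(H)| \le m(H) + \overline{\Delta_e}(H) + 2$, stronger than required. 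If instead $|X| \le 1$, so $2m(H) \ge |V(H)|-1$, then \cref{lem: bounding anti edge codeg} gives $\overline{\Delta_e}(H) \ge \tfrac{|V(H)|}{2} - 3$, and therefore
\[
m(H) + \overline{\Delta_e}(H) + 3 \;\ge\; \frac{|V(H)|-1}{2} + \frac{|V(H)|}{2} \;=\; |V(H)| - \tfrac{1}{2}.
\]
Since both sides are integers, this rounds up to $\ge |V(H)|$, completing the proof.

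The main obstacle is not in this final assembly but is already absorbed into \cref{lem: bounding anti edge codeg}, whose bound is weaker by one than that of \cref{lem: bounding anti codeg}: this exact one-unit loss is what forces the additive constant to drop from $+2$ in \cref{thm:alpha2_chiBounded} to $+3$ in its edge-codegree analog. The case split itself is essentially the same as in \cref{thm:alpha2_chiBounded}, because the vertices $v,w \in X$ used in the dominant case are non-adjacent in $H$, so $\overline{\Delta_e}(H)$ (rather than the less restrictive $\macodeg(H)$) is the right parameter to read off from $\acodeg^H(v,w)$; no new structural argument is needed once the edge-codegree versions of \cref{lem: bounding anti edge codeg} and \cref{lem: no perfect matching} are in hand.
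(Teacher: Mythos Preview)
Your proof is correct and follows exactly the approach the paper indicates: the paper does not spell out a separate proof but simply notes that the argument of \cref{thm:alpha2_chiBounded} carries over verbatim once one replaces the constant $2$ by $3$ and invokes \cref{lem: bounding anti edge codeg} in place of \cref{lem: bounding anti codeg}. You have correctly identified the one point that needs checking for the $|X|\ge 2$ case, namely that the witnesses $v,w\in X$ are non-adjacent in $H$ (so that $\acodeg^H(v,w)$ is indeed bounded by $\overline{\Delta_e}(H)$), and the rest is the same integrality argument as in the original.
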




\begin{lem}[cf.~\cref{lem: prismatic}]
    If $H$ is a prismatic graph, then $\overline{\chi}(H) \leq \overline{\Delta_e}(H) + 3$.
\end{lem}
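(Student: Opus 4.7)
The plan is to mimic the induction of \cref{lem: prismatic} almost verbatim, the only change being that the base case now comes from the relaxed triangle-free bound (the $+3$ instead of $+2$) and that every anticodegree we invoke must be witnessed by a \emph{non}-adjacent pair, since $\overline{\Delta_e}(H) = \max\{\acodeg^H(u,v) : uv \notin E(H)\}$. Concretely, I would fix a set $\mathcal{T}$ of pairwise vertex-disjoint triangles of $H$ such that $H - \mathcal{T}$ is triangle-free, and induct on $|\mathcal{T}|$.

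The base case $|\mathcal{T}| = 0$ is immediate from the adapted \cref{thm:alpha2_chiBounded}, which gives $\overline{\chi}(H) \le \overline{\Delta_e}(H) + 3$ for triangle-free $H$. For the inductive step, pick $T \in \mathcal{T}$ and set $H' := H - T$. Since prismaticity is hereditary and $\mathcal{T} \setminus \{T\}$ certifies $H' - (\mathcal{T} \setminus \{T\})$ is triangle-free, the induction hypothesis gives $\overline{\chi}(H') \le \overline{\Delta_e}(H') + 3$. The triangle $T$ can be covered by one additional clique, so $\overline{\chi}(H) \le \overline{\Delta_e}(H') + 4$, and it therefore suffices to show $\overline{\Delta_e}(H') + 1 \le \overline{\Delta_e}(H)$.

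For the latter, let $u, v \in V(H')$ be non-adjacent with $\acodeg^{H'}(u,v) = \overline{\Delta_e}(H')$. Since $H$ is prismatic and $u, v \notin T$, each of $u$ and $v$ is adjacent in $H$ to exactly one vertex of $T$, so at least one vertex of $T$ is a common non-neighbor of $u$ and $v$ in $H$. Hence $\acodeg^H(u,v) \ge \acodeg^{H'}(u,v) + 1$, and because $u,v$ remain non-adjacent in $H$, this pair is a legitimate witness for $\overline{\Delta_e}(H)$. This yields $\overline{\Delta_e}(H) \ge \overline{\Delta_e}(H') + 1$ and completes the induction. The only place one has to be careful is verifying that the witnessing pair $(u,v)$ is still a permissible pair for the edge-codegree parameter on $H$, but non-adjacency in $H'$ trivially transfers to $H$ since no edges between $u,v$ are added by reinserting $T$.
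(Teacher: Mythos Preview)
Your proof is correct and follows essentially the same approach as the paper, which explicitly states that the argument of \cref{lem: prismatic} carries over verbatim once the constant $2$ is replaced by $3$ and the base case is supplied by the adapted \cref{thm:alpha2_chiBounded}. You have correctly isolated the one extra point of care---that the witnessing pair $(u,v)$ for $\overline{\Delta_e}(H')$ must be non-adjacent, and that this non-adjacency persists in $H$---which is exactly what the paper's remark ``a careful look at their proofs'' is pointing to.
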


\begin{cor}[cf.~\cref{cor: antiprismatic}]
    If $G$ is an antiprismatic graph, then $\chi(G) \leq \Delta_e(G)+3$.
\end{cor}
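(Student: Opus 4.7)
The plan is to derive the corollary immediately from the lemma stated just above it, by taking complements: since $\chi(G) = \overline{\chi}(\overline{G})$ and $\Delta_e(G) = \overline{\Delta_e}(\overline{G})$, and since $G$ is antiprismatic iff $\overline{G}$ is prismatic, the bound $\overline{\chi}(H) \leq \overline{\Delta_e}(H) + 3$ for prismatic $H$ translates directly. So the substantive work is in the lemma, whose proof I would obtain by mirroring the original proof of \cref{lem: prismatic}, making adaptations only where the switch from $\macodeg$ to $\overline{\Delta_e}$ forces them.

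Concretely, I would induct on $|\mathcal{T}|$, where $\mathcal{T}$ is a set of vertex-disjoint triangles such that $H \setminus \mathcal{T}$ is triangle-free. The base case $|\mathcal{T}|=0$ is handled by the $\overline{\Delta_e}$-analog of \cref{thm:alpha2_chiBounded} already established in this section. For the inductive step, fix $T \in \mathcal{T}$ and let $H' = H \setminus T$. Since the class of prismatic graphs is hereditary, $H'$ is prismatic and admits the triangle packing $\mathcal{T} \setminus \{T\}$, so the induction hypothesis yields $\overline{\chi}(H') \leq \overline{\Delta_e}(H') + 3$. Because $T$ is itself a clique of $H$, I can extend any clique cover of $H'$ by the single clique $T$, giving $\overline{\chi}(H) \leq \overline{\chi}(H') + 1 \leq \overline{\Delta_e}(H') + 4$. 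It therefore suffices to prove the reduction inequality $\overline{\Delta_e}(H) \geq \overline{\Delta_e}(H') + 1$.

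To establish this, I would choose a non-edge $uv$ of $H'$ attaining $\acodeg_{H'}(u,v) = \overline{\Delta_e}(H')$; this same pair is a non-edge of $H$ since $H'$ is induced. Because $H$ is prismatic and $u, v \notin T$, each of $u$ and $v$ is adjacent to exactly one vertex of $T$, so at least one vertex of $T$ lies outside $N_H(u) \cup N_H(v)$, i.e.~is a common non-neighbor of $u$ and $v$ in $H$. Hence $\acodeg_H(u,v) \geq \acodeg_{H'}(u,v) + 1$, and since $uv$ is a non-edge of $H$ this contributes to $\overline{\Delta_e}(H)$, producing the desired inequality.

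The main obstacle, absent in the original $\macodeg$ proof, is that $\overline{\Delta_e}$ is only maximized over non-edges, so the argument above requires $H'$ to contain at least one non-edge. A prismatic graph with no non-edges is a clique, and prismaticity forces any such clique to have at most three vertices, since a fourth vertex $v$ would need to be adjacent to exactly one vertex of each triangle in its clique. This constrains the degenerate case to $|V(H)| \leq 6$, and the few remaining configurations of prismatic graphs on at most six vertices can be dispatched by direct inspection of their clique covers, yielding $\overline{\chi}(H) \leq 3 \leq \overline{\Delta_e}(H) + 3$.
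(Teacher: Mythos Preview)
Your approach is correct and mirrors the paper's: it derives the corollary from the prismatic lemma by complementation, and obtains that lemma by rerunning the proof of \cref{lem: prismatic} with the constant $2$ replaced by $3$, exactly as the paper indicates. You additionally catch that the switch from $\macodeg$ to $\overline{\Delta_e}$ requires the witnessing pair $u,v$ to be a non-edge of $H'$, an edge case the paper leaves implicit; your handling is sound and in fact simplifies further, since if $H'$ is a complete prismatic graph then $|H'|\le 3$, whence $\overline{\chi}(H) \le \overline{\chi}(H')+1 \le 2 \le \overline{\Delta_e}(H)+3$ directly, with no case inspection needed.
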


\subsubsection*{Three-cliqued graphs}
For the three-cliqued graphs, we only considered the codegree of vertices that have the same color in the complement graphs. Since in the three-cliqued graphs these vertices have to be adjacent, the proofs for the statements with $\Delta_e$ instead of $\Delta_2$ are unchanged and the following holds.
\begin{cor}[cf.~\cref{cor:three-cliqued}]
    If $G$ is a claw-free graph with $\chi(\overline{G})\leq 3$, then $\chi(G) \leq \Delta_e(G)+3$.
\end{cor}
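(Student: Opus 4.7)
My approach is to mirror the structure of Corollary~\ref{cor:three-cliqued}, which deduces the $\mcodeg$ version of the statement directly from Lemma~\ref{lem: updatedstatement}, and then to observe that Lemma~\ref{lem: updatedstatement} already produces the stronger edge-codegree bound without any change to its statement or proof. The hypothesis $\chi(\overline{G}) \leq 3$ provides a clique cover $(K_1, K_2, K_3)$ of $G$. Plugging this cover into Lemma~\ref{lem: updatedstatement} yields some index $i \in \{1,2,3\}$ for which
\[
\chi(G) \;\leq\; \max_{u,v \in K_i} \codeg(u,v) + 3.
\]

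The crucial new observation I would highlight is that the maximum on the right is taken over pairs of distinct vertices $u, v$ inside the clique $K_i$, so $uv \in E(G)$. By the very definition of $\Delta_e(G)$ as the maximum codegree across pairs of endpoints of an edge, this forces $\max_{u,v \in K_i} \codeg(u,v) \leq \Delta_e(G)$, and substituting back gives $\chi(G) \leq \Delta_e(G) + 3$.

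There is essentially no obstacle here: the actual work for the three-cliqued case has already been carried out in Lemma~\ref{lem: updatedstatement}, which in turn relies on Lemma~\ref{lem:alpha2_strong_triangleFree} about $3$-chromatic triangle-free graphs. In both of those earlier results, the (anti-)codegrees that are controlled are invariably those of pairs of same-color vertices in $\overline{G}$, i.e.\ adjacent pairs in $G$ lying inside a common $K_i$. In other words, the existing $\mcodeg$-proof is tacitly an $\Delta_e$-proof already, which is precisely the reason the paper can say that the three-cliqued case carries over to \MPe ``unchanged.'' I would therefore present the corollary as an immediate one-line consequence of Lemma~\ref{lem: updatedstatement} combined with the cliqueness of $K_i$, without re-entering the inductive machinery of the three-cliqued analysis.
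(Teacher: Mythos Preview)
Your proposal is correct and takes essentially the same approach as the paper. The paper's Section~\ref{sec:adaptation} handles the three-cliqued case by observing that the proofs of Lemma~\ref{lem:alpha2_strong_triangleFree} and Lemma~\ref{lem: updatedstatement} only ever consider codegrees of pairs lying in the same clique $K_i$ (equivalently, same-color pairs in the complement), hence adjacent pairs, so the bound is already a $\Delta_e$-bound; you make exactly this observation, phrased as applying Lemma~\ref{lem: updatedstatement} as a black box and then noting $\max_{u,v\in K_i}\codeg(u,v)\le\Delta_e(G)$ since $K_i$ is a clique.
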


\subsubsection*{The 2-joins}
Next, we look at the graphs that admit a $2$-join $((X_1,Y_1),(X_2,Y_2))$, where $(G_2,X_2,Y_2)$ is one of the specified types of strips. We have already seen that $G$ is not critical for \MPe if $G$ admits a canonical interval $2$-join. 
\cref{ribbon 2 join edge} implies that if $G$ is critical for \MPe then $G_2$ is a ribbon.

Suppose $G$ is a ribbon antihat $2$-join. In the proof of \cref{antihat critical}, we only consider codegrees of vertices that lie both in $A$, both in $B$ or both in $C$. Since $A$, $B$ and $C$ are cliques, $G$ is not critical for \MPe if $G$  is a ribbon antihat $2$-join. Also in the proof of \cref{Lem: strange 2 join} we only look at codegrees of vertices of the same type. Hence, we can use this proof to show that $G$ is not critical for \MPe if $G$  is a ribbon strange $2$-join.  Moreover, if $G$ is a pseudoline $2$-join, then $G_2$ is a thickening of some other graph. Hence, by \cref{ribbon 2 join edge}, $G$ is not critical for \MPedot. Last, as in the proof of \cref{lemma:gear2join}, we only consider codegrees of adjacent vertices. So using this proof, we can show that $G$ is not critical for \MPe if $G$  is a ribbon gear $2$-join.

\subsubsection*{Icosahedral graphs}
For the icosahedral graphs, we observe that the proof of \cref{lem:icosahedral} also works for edge-codegree as $\Delta_e(H)=0$ implies that $H$ is triangle-free.

\bigskip
This completes our description of how to adapt our proof of  \cref{thm:maincf} for \cref{thm:maincfedge}.

\section{Concluding remarks}\label{sec:conclusion}

Shortly after stating it, we hinted at a possible sharpening of \cref{conj:Vu}.

The following is stronger in two senses: first, we tighten the bound $\mcodeg(G)+\varepsilon_2\mcodeg(G)$ in the larger codegree regime to align with Vizing's theorem; second, we conjecture a bound in the smaller codegree regime too.

\begin{conj}\label{conj:Vusharper}
For some $C>0$, $\chi(G) \le \max\{C\Delta(G)/\log\Delta(G),\ \mcodeg(G)+3\}$.
\end{conj}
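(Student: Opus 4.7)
The plan is to split into two regimes according to the relative sizes of $\mcodeg(G)$ and $\Delta(G)/\log\Delta(G)$. When $\mcodeg(G) \ge C\Delta(G)/\log\Delta(G)$ (the \emph{dense} regime), the target is $\chi(G) \le \mcodeg(G)+3$; when $\mcodeg(G) < C\Delta(G)/\log\Delta(G)$ (the \emph{sparse} regime), the target is $\chi(G) \le C\Delta(G)/\log\Delta(G)$. The constants would need to be tuned together so the two regimes overlap slightly and neither case is vacuous.

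In the dense regime, the starting point is the $(1+o(1))\mcodeg(G)$ upper bound from~\cite{HJK22,KKO24,BDMW25+}. To replace the $o(\mcodeg(G))$ slack by a constant equal to $3$, I would attempt a reserve-and-repair scheme: reserve three colours at the outset; apply a nibble-type random partial colouring with a palette of size $\mcodeg(G)$ to colour all but a residual subgraph $R$; then finish $R$ using the three reserved colours. The success hinges on forcing $R$ to be $3$-colourable (for instance, of maximum degree at most $3$, or $3$-degenerate), and matching the tight examples coming from line graphs of class-II triangle-free graphs, in particular the Petersen line graph, is what would pin the additive constant at exactly $3$.

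In the sparse regime, every neighbourhood of $G$ spans at most $\Delta(G)\mcodeg(G)/2 = O(\Delta(G)^2/\log\Delta(G))$ edges. Classical Alon--Krivelevich--Sudakov then yields only $O(\Delta(G)/\log\log\Delta(G))$, which is off by a $\log\log\Delta(G)$ factor from the target. Closing this gap would require a Molloy-style entropy compression adapted to codegree, using the codegree bound directly (rather than merely through the neighbourhood edge count), so that the effective list-size decay in the iterative nibble matches the triangle-free rate $(1+o(1))\Delta/\log\Delta$. Random regular graphs show the bound $O(\Delta/\log\Delta)$ to be tight, so any successful argument must track their behaviour.

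The principal obstacle lies in the dense regime: no existing probabilistic technique is known to deliver a constant additive slack in place of the multiplicative $(1+o(1))$. The present paper achieves $+3$ for claw-free graphs by running a structure theorem exhaustively and handling each base class by hand; for arbitrary graphs no comparable decomposition is available, and one would in effect need a classification (up to bounded slack) of graphs with $\chi(G) \ge \mcodeg(G)+4$ — about which essentially nothing is currently known. This is why \cref{conj:Vusharper} should properly be viewed as a far-reaching strengthening of Vu's conjecture rather than an incremental step from it.
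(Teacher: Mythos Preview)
The statement you are attempting is a \emph{conjecture}, not a theorem: the paper states \cref{conj:Vusharper} in the concluding remarks as an open problem and offers no proof or proof sketch. There is therefore nothing in the paper to compare your proposal against.

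Your write-up is not a proof either, and you say as much in your final paragraph. What you have produced is an accurate survey of the two natural regimes and the known obstructions in each: the nibble-based bounds of~\cite{HJK22,KKO24,BDMW25+} give only $(1+o(1))\mcodeg(G)$ with no known route to an additive constant, and in the sparse regime the locally-sparse colouring of Alon--Krivelevich--Sudakov falls short by a $\log\log\Delta$ factor. Both of these gaps are real and currently unresolved; your ``reserve-and-repair'' idea for the dense regime has no mechanism for forcing the residual graph to be $3$-colourable, and your appeal to ``Molloy-style entropy compression adapted to codegree'' in the sparse regime is a hope rather than a method. You correctly conclude that the conjecture is a far-reaching strengthening of Vu's conjecture for which no approach is presently available---which is exactly the paper's stance as well.
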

\noindent
If true, the first term in the maximization would be sharp up to the choice of $C$ due to random regular graphs, the other term due to certain line graphs. 
We might dub this formulation ---as well as its strengthening to bound $\chi_\ell(G)$ instead of $\chi(G)$--- the {\em Strong Vu Conjecture}.
The hypothetical transition between the two regimes represents an inherent mixture of probabilistic and structural graph theoretic considerations.
We note that \cref{conj:Vusharper} in the $\mcodeg(G) = O(\Delta(G)/\log\Delta(G))$ regime would, if true, strengthen results in~\cite{BDMW25+,CJMS23+}.
Moreover, by a random partitioning argument (see~\cite[Sec.~3]{BDMW25+}), a bound of $\chi(G)\le (1+o(1))\Delta(G)/\log \Delta(G)$ provided $\mcodeg(G) \le (1+o(1))\Delta(G)/\log \Delta(G)$ would suffice to prove the conjecture (up to a $(1+o(1))$ factor in the second term of the maximization); we thank Noga Alon for pointing out to us this implication.
Besides \cref{conj:Vu} and \cref{conj:Vusharper}, there are several other natural directions for further study.

First, let us call a claw-free graph $G$ {\em Vu-critical} if $\chi(G) = \mcodeg(G)+3$ but $\chi(H) < \mcodeg(H)+3$ for any induced subgraph $H$ of $G$. What is the class of all Vu-critical graphs? Does it contain any graph that is not the line graph of a graph of class II?

Second, are there other hereditary graph classes within which we can bound the chromatic number of $G$ by $(1+o(1))\mcodeg(G)$ ---apart from those where we derive the result as corollary to some analogous bound in terms of $\omega(G)$?
Let  $\mathcal G$ be a hereditary class.
Recall that if we have established some bound of the form $\chi(G) \le f(\omega(G))$ for all $G\in{\mathcal G}$, then we immediately also have a bound of the form $\chi(G)\le f(\mcodeg(G)+2)$ for all $G\in{\mathcal G}$.
We can thus disregard $\mathcal G$ having only graphs satisfying $\chi(G) \le \omega(G)+O(1)$ (e.g.~\cite{esperet2016coloring-regions}).
As an aside, one might be lured into thinking of $\mathcal G$ being all $K_{1,r}$-free graphs or, more restrictively, graphs all of whose neighborhoods are the union of $r-1$ cliques, $r \ge 4$ as the most natural next step, but we caution that this includes line graphs of linear $(r-1)$-uniform hypergraphs as a special case~\cite{PiSp89}.
We surmise that $\mathcal G$ being all bull-free graphs could be a good next target, particularly given their thorough investigation from a structural standpoint~\cite{Chudnovsky2012a,Chudnovsky2012, ErdosHajnalBulls}.
Irrespective of resolving \cref{conj:Vu} for $\mathcal G$, it seems interesting to pursue bounds for the optimal $f_{\mathcal G}$ (if it exists) such that $\chi(G) \le f_{\mathcal G}(\mcodeg(G))$ for all $G \in {\mathcal G}$.
This would take a natural parallel to the study of $\chi$-boundedness~\cite{Gya87}. 

Third, it would be natural to extend \cref{thm:maincf} to list coloring, in the spirit of Vu's original formulation of \cref{conj:Vu}.
\begin{conj}\label{conj:cflist}
If $G$ is claw-free, then $\chi_\ell(G) \le \mcodeg(G)+3$. 
\end{conj}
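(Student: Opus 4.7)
The plan is to upgrade the proof of \cref{thm:maincf} step by step to list coloring, following the same structural decomposition from \cref{thm:structure-king-reed}. Several ingredients already transfer with little or no effort: the bound for circular interval graphs in \cref{lem:circular_interval_graphs} is already stated for $\chi_\ell$; the greedy extension arguments used throughout---\cref{lem:nodominatingvertices}, \cref{lem:canonical interval 2 join}, \cref{antihat critical}, \cref{Lem: strange 2 join}, \cref{lemma:gear2join}, and \cref{lem:icosahedral}---compare the number of already-colored neighbors of an uncolored vertex against the list size at that vertex, so they transfer to lists verbatim. The clique-cutset reduction in \cref{lem:no-clique-cutset} requires a little more care for list coloring but can be handled by adapting standard arguments. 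The homogeneous-pair-of-cliques reductions (\cref{thm:nontrivialpairOG}, \cref{lem: No non triv hom pair claw-free}) and the thickening reduction (\cref{lem:thickenings}) should translate to list coloring; the original sources contain material compatible with lists, so these translations look routine.

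The principal obstacle is the line graph case (\cref{lem:linegraph_multigraphs}). The existing proof derives $\chi(L(H)) \le \mcodeg(L(H)) + 3$ by invoking Vizing's theorem, and the list version of the same strategy would require a bound of the form $\chi'_\ell(H) \le \Delta(H) + 2$ for simple $H$. This is essentially the List Coloring Conjecture, which remains wide open; the best unconditional result, Kahn's $\chi'_\ell(H) = (1+o(1))\Delta(H)$, is not strong enough to give an additive $+3$ in \cref{conj:cflist}. Consequently, any proof of \cref{conj:cflist} along these lines is either conditional on the List Coloring Conjecture (or a $+O(1)$ weakening of it), or must bypass line graphs entirely---for instance, by combining Kahn's theorem on line graphs with an asymptotic $\chi_\ell(G) \le (1+o(1))\mcodeg(G)$ bound proved by a different technique.

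A secondary obstacle arises in the $\alpha(G) \le 2$ case (\cref{ssec:cf_alpha2}), whose conclusion feeds into the antiprismatic and three-cliqued cases. The chromatic-number proof there converts graph coloring into matching theory via the identity $\overline{\chi}(H) = |V(H)| - m(H)$ for triangle-free $H$, an identity with no direct list-coloring analogue. A natural route is to work directly with $G$ and invoke Galvin's theorem---that $\chi'_\ell = \chi'$ on bipartite graphs---applied to an auxiliary bipartite graph encoding how each vertex of $G$ is to be matched into a size-one or size-two clique of $G$. Verifying that the matching-based lemmas (\cref{lem: bounding anti codeg}, \cref{lem: no perfect matching}, \cref{lem:alpha2_strong_triangleFree}, \cref{lem: updatedstatement}) survive such a rewrite is the main self-contained step. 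Together with the line graph obstacle, this is where I expect the bulk of the work---and the genuine difficulty---to lie.
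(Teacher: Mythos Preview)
This statement is a \emph{conjecture} in the paper, not a theorem; the paper offers no proof. Your proposal is therefore not to be compared against a proof in the paper, but against the paper's own discussion of why \cref{conj:cflist} is open.

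Your diagnosis of the principal obstacle is correct and matches the paper exactly. Immediately after stating \cref{conj:cflist}, the authors write that it ``is essentially a generalization of the so-called Weak List Coloring Conjecture,'' and after \cref{lem:linegraph_multigraphs} they remark that the list-coloring analogue of that lemma holds ``conditioned on the truth of the (Weak) List Coloring Conjecture.'' One small correction: the bound you would need on line graphs is $\chi'_\ell(H)\le\Delta(H)+1$, not $\Delta(H)+2$, since $\mcodeg(L(H))$ can equal $\Delta(H)-2$; this is precisely the Weak List Coloring Conjecture, so the obstacle is not merely ``essentially'' the LCC but exactly its weak form. In short, \cref{conj:cflist} is at least as hard as a famous open problem, and no unconditional proof along the lines you sketch is currently possible.

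Your secondary observations are also apt. The clique-cutset reduction genuinely does not transfer to list coloring by the argument in \cref{lem:no-clique-cutset}: combining list-colorings of the pieces requires them to agree on the cutset, and for lists one cannot simply permute colors. Likewise the $\alpha(G)\le 2$ case in \cref{ssec:cf_alpha2} is proved via clique covers of the complement, which has no direct list analogue; your idea of going through Galvin is plausible but would need real work. The paper does not address either issue, since it does not attempt to prove \cref{conj:cflist}. So your proposal is best read not as a proof but as an accurate map of where the difficulties lie---and on that score it agrees with the paper.
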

\noindent
This is essentially a generalization of the so-called Weak List Coloring Conjecture, which states for any simple graph that the list chromatic index is bounded by one plus the maximum degree.
Due to \cref{thm:maincf}, \cref{conj:cflist} is a special case of a conjecture of Gravier and Maffray~\cite{GrMa04}, which asserts that the list chromatic number coincides with the chromatic number in any claw-free graph.

Fourth, consider the {\em square} $G^2$ of a graph $G$, which is constructed from $G$ by adding edges between those pairs of distinct vertices of $G$ connected by a two-edge path in $G$. Inspired by \cref{conj:Vu}, we propose an analogous problem for coloring $G^2$.

\begin{conj}\label{conj:ErdosNesetrilVu}
For some $C>0$, $\chi(G^2) \le \max\left\{ C\Delta(G)^2/\log\Delta(G), \ \frac{5}{4}(\mcodeg(G)+2)^2\right\}$.
\end{conj}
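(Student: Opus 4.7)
The plan is to regard Conjecture~\ref{conj:ErdosNesetrilVu} as a two-regime generalization of the Erd\H{o}s--Ne\v{s}et\v{r}il strong-edge-coloring conjecture to arbitrary graphs, paralleling the dichotomy already visible in Conjecture~\ref{conj:Vusharper}. Write $d := \mcodeg(G)$ and $D := \Delta(G)$ and fix a threshold such as $d_0 := D/\sqrt{\log D}$, so that the two terms of the proposed maximum coincide up to constants. The regimes $d \le d_0$ and $d > d_0$ would then be handled by probabilistic and structural methods respectively. As a sanity check: when $G = L(G_0)$ is a line graph of a simple graph, the identity $\mcodeg(G) \in \{\Delta(G_0)-2, \Delta(G_0)-1\}$ recalled in Section~\ref{sec:intro} aligns the second term with the classical Erd\H{o}s--Ne\v{s}et\v{r}il bound $\tfrac{5}{4}\Delta(G_0)^2$ on the strong chromatic index.

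For the sparse codegree regime ($d \le d_0$), I would apply Nibble-type arguments to $G^2$ directly. The key observations are that $\Delta(G^2) \le D^2$ and that when $d \ll D$, the local structure of $G^2$ at each vertex is sparse in a quantifiable way: in particular, the codegree and triangle count of $G^2$ at any vertex can be bounded in terms of $d$ and $D$, and are $o(\Delta(G^2))$ when $d = o(D)$. In this locally sparse regime, results in the spirit of Johansson and Molloy--Reed, as well as the probabilistic tools already deployed in~\cite{HJK22,KKO24,BDMW25+}, should deliver $\chi(G^2) \le CD^2/\log D$, matching the first term in the conjectured maximum.

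For the dense codegree regime ($d > d_0$), the plan is first to verify the conjecture when $G$ is a line graph; there it reduces (modulo the constant) to the Erd\H{o}s--Ne\v{s}et\v{r}il conjecture itself. Next, following the structural template of Section~\ref{sec:cf}, one would try to lift the bound from line graphs to claw-free $G$ via the claw-free structure theorem (\cref{thm:structure-king-reed}), applied now to understand $G^2$ rather than $G$ directly. The hope is that the familiar decompositions---clique cutsets, thickenings, $2$-joins, three-cliqued pieces, icosahedral thickenings---interact well enough with the squaring operation to propagate a uniform bound. For fully general $G$ a further reduction would be required, perhaps by carefully controlling how pairs at distance exactly $2$ accumulate when $\mcodeg(G)$ is large.

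The main obstacle is the constant $\tfrac{5}{4}$: even in the line-graph case this is the Erd\H{o}s--Ne\v{s}et\v{r}il conjecture itself, which remains open, with known bounds being of the form $(\tfrac{5}{4}+\varepsilon)\Delta(G_0)^2$ at best. Consequently, any proof of Conjecture~\ref{conj:ErdosNesetrilVu} must implicitly contain a proof of Erd\H{o}s--Ne\v{s}et\v{r}il. A realistic intermediate milestone is therefore a qualitative version with $\tfrac{5}{4}$ replaced by some larger absolute constant $C'$; this should follow by combining the sparse-regime Nibble argument with the state-of-the-art strong-chromatic-index bound and the structural lifting sketched above. Squeezing the constant down to $\tfrac{5}{4}$ exactly appears to require new techniques specifically targeting the near-extremal $C_5$-blowup configurations, which are precisely where the Erd\H{o}s--Ne\v{s}et\v{r}il extremal examples live.
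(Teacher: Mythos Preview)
The statement you are attempting to prove is \emph{Conjecture}~\ref{conj:ErdosNesetrilVu}, and the paper does not prove it. It is stated in the concluding remarks as an open problem, accompanied only by a few sentences of motivation: that the first term is sharp for random regular graphs, the second for the line graph of the strong product of $C_5$ with an independent set, that the line-graph case is essentially the Erd\H{o}s--Ne\v{s}et\v{r}il conjecture, and that the claw-free case reduces (via \cite{JKP19,CaKa19}) to the case of line graphs of multigraphs. There is no proof in the paper to compare your proposal against.

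Your proposal is not a proof either, and you say as much yourself: you correctly observe that any proof of the conjecture with the constant $\tfrac{5}{4}$ would contain a proof of the Erd\H{o}s--Ne\v{s}et\v{r}il conjecture, which is open. What you have written is a reasonable research outline for attacking a weakened form, and it is broadly consonant with the paper's remarks (the two-regime dichotomy, the reduction of the claw-free case). But it should not be presented as a proof of the stated conjecture.
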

\noindent
If true, the first term in the maximization would be sharp up to the choice of $C$ due to random regular graphs, the other due to the line graph of the strong product of the $5$-cycle and an independent set. 
For $G$ the line graph of a simple graph, this is essentially the Erd\H{o}s--Ne\v{s}et\v{r}il conjecture (see~\cite{Erd88,HJK22}).
By the same proof as in~\cite{JKP19} (see also~\cite{CaKa19}), one finds that \cref{conj:ErdosNesetrilVu} in the case of $G$ being claw-free essentially reduces to \cref{conj:ErdosNesetrilVu} in the case of $G$ being the line graph of a multigraph.

In the above suggested lines of further study, if we substituted $\Delta_e$ (the codegree maximized over the endpoints of edges) in the place of $\mcodeg$, then we would obtain a few more interesting problems. In particular, the analogues of \cref{conj:Vu}, \cref{conj:Vusharper}, \cref{conj:cflist}, and \cref{conj:ErdosNesetrilVu} with  $\Delta_e$ in the place of $\mcodeg$ may be worth investigation.

\paragraph{Acknowledgements.} 
This work was initiated during a month-long visit by ER to the Korteweg--de Vries Institute for Mathematics (KdVI) at the University of Amsterdam. 
We thank the organizers of the KdVI cookie time. 
RK is grateful to Lotte Paterek, whose master's thesis survey on the List Coloring Conjecture was a timely reminder of the edge-coloring problems and results that inspired this project.

This project began when LC was affiliated with the University of Amsterdam. LC was partially supported by the Gravitation Programme NETWORKS (024.002.003) of the Dutch Ministry of Education, Culture and Science (OCW). RK was partially supported by the Dutch Research Council (NWO) grant OCENW.M20.009 and the Gravitation Programme NETWORKS (024.002.003) of the OCW. ER was partially supported by the Belgian National Fund for Scientific Research (FNRS).

\paragraph{Emails.} \protect\href{mailto:l.j.cook@uu.nl}{\protect\nolinkurl{l.j.cook@uu.nl}}, \protect\href{mailto:r.kang@uva.nl}{\protect\nolinkurl{r.kang@uva.nl}}, \protect\href{mailto:eileen.robinson@ulb.be}{\protect\nolinkurl{eileen.robinson@ulb.be}}, \protect\href{mailto:g.c.zwaneveld@uva.nl}{\protect\nolinkurl{g.c.zwaneveld@uva.nl}}.

\paragraph{Open access statement.} For the purpose of open access,
a CC BY public copyright license is applied
to any Author Accepted Manuscript (AAM)
arising from this submission.

\bibliography{bibli}
\bibliographystyle{abbrv}

\appendix

\section{A better bound for circular interval graphs}\label{Section: Appendix better bound}
\begin{lem}\label{lem: shift graph}
    Given a positive integer $k$, let $G$ be a circular interval graph such that for every set of $k$ consecutive vertices on the underlying circle of $G$ there is precisely one interval of size $k$ that contains it. Then $\chi_{\ell}(G) \leq \Delta_e(G)+2$. 
\end{lem}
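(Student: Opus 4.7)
The plan is to first identify $G$'s precise structure and then reduce the lemma to a degree-choosability bound.

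I would begin by showing that $G$ is the shift graph: the stated hypothesis, combined with the blanket closure of $\mathcal{I}$ under subintervals, determines the adjacency in $G$ as follows. Labeling the $n$ vertices cyclically as $v_0, v_1, \ldots, v_{n-1}$, we have $v_iv_j \in E(G)$ iff their cyclic distance is at most $k-1$. From this, $\Delta(G) = 2(k-1)$ and $\omega(G) = k$; a direct count yields $\codeg(v_i,v_j) = 2k - 3 - d$ for an edge $v_iv_j$ at cyclic distance $d \in \{1,\ldots,k-1\}$, so $\Delta_e(G) = 2k - 4$ (achieved at $d = 1$, when $n \geq 2k$). The target therefore reduces to showing $\chi_\ell(G) \leq 2k - 2 = \Delta(G)$.

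Next, I would dispense with the easy case $n \leq 2k - 1$: every pair of vertices is within cyclic distance $k - 1$, so $G = K_n$, and $\chi_\ell(G) = n = \Delta_e(G) + 2$ holds immediately. The main case is $n \geq 2k$, which I plan to handle by a modified greedy list-coloring in the cyclic order. For any index $k - 1 \leq i \leq n - k$, the vertex $v_i$ has precisely $k - 1$ already-colored left-neighbors $v_{i-k+1}, \ldots, v_{i-1}$, and since the list $L(v_i)$ has size $2k - 2 > k - 1$, a color is always available. The real challenge is with the wrap-around vertices $v_{n-k+1}, \ldots, v_{n-1}$; most notably, $v_{n-1}$ has all of its $\Delta = 2k - 2$ neighbors colored at its turn.

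To leave a color available in $L(v_{n-1})$, the aim is to ensure that two neighbors of $v_{n-1}$ share a color. Observe that the vertices $v_0$ and $v_{n-k}$ both lie in $N(v_{n-1}) = \{v_{n-k},\ldots,v_{n-2}\} \cup \{v_0,\ldots,v_{k-2}\}$ and are at cyclic distance exactly $k$, hence non-adjacent as soon as $n \geq 2k$; if they share a color then the number of distinct colors on $N(v_{n-1})$ drops to at most $2k - 3$, so a color in $L(v_{n-1})$ remains unused.

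The main obstacle is that for arbitrary lists $L(v_0) \cap L(v_{n-k})$ may be empty, so the naive forcing step does not directly go through. A clean resolution is to invoke the Erd\H{o}s--Rubin--Taylor degree-choosability theorem: $G$ is $2$-connected (hence a single block) and, for $k \geq 3$, neither a complete graph (since $v_0, v_k$ are non-adjacent when $n \geq 2k$) nor an odd cycle (since $\Delta(G) \geq 4$); in this range $G$ is degree-choosable, hence $\Delta(G)$-choosable, yielding $\chi_\ell(G) \leq \Delta(G) = \Delta_e(G) + 2$ as desired. Small exceptional cases ($k \leq 2$ or very small $n$) would be settled by direct inspection, and a more self-contained alternative is a kernel-method argument orienting $G$'s edges so that each out-degree is at most $2k-3$ while preserving kernel-perfectness.
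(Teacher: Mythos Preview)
Your approach is essentially the paper's: after identifying $G$ as the $(k{-}1)$-st power of $C_n$, computing $\Delta(G) = 2k-2$ and $\Delta_e(G) = 2k-4$ for $n \geq 2k$, and disposing of the complete-graph case $n \leq 2k-1$, both you and the paper simply invoke the list version of Brooks' theorem to conclude $\chi_\ell(G) \leq \Delta(G) = \Delta_e(G) + 2$. Your detour through a greedy argument before falling back on degree-choosability is extraneous but harmless.

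There is, however, one genuine gap, and it is shared with the paper: the case $k = 2$ cannot be ``settled by direct inspection.'' For $k = 2$ the graph $G$ is the cycle $C_n$; since $C_n$ is triangle-free for $n \geq 4$ we have $\Delta_e(C_n) = 0$, while $\chi_\ell(C_n) = 3$ whenever $n \geq 5$ is odd. Thus already $C_5$ violates the lemma as stated. (The paper's own proof asserts that odd cycles satisfy $\Delta_e(G) = 1$, which is false for exactly this reason.) The statement becomes true, and both arguments go through verbatim, if $\Delta_e$ is replaced by $\mcodeg$ throughout---indeed $\mcodeg(C_n) = 1$ for $n \geq 5$---and the main body of the paper announces this appendix result in terms of $\mcodeg$, so this is almost certainly a typo in the lemma that you have unknowingly inherited.
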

\begin{proof}
If $G$ is a complete graph on $n$ vertices, then $\Delta_e(G)=n-2$ and $\chi(G)=n$ as desired. 
Otherwise $k \leq \frac{|V(G)|}{2}$, and then $\Delta(G) = 2k-2$ and $\Delta_e(G) = 2k-4$. By the list version of Brooks' theorem~\cite{vizing1976vertex}, the only graphs such that $\chi_{\ell}(G) > \Delta(G)$ are odd cycles and complete graphs. Odd cycles satisfy $\Delta_e(G)= 1$ and $\chi(G) =3$. Hence, $\chi(G) \leq \Delta_e(G) +2$.
\end{proof}

For the next proof, we adopt some of the terminology for circular interval graphs that we introduced at the beginning of \cref{sub:circular}.

\begin{lem}\label{lem:circular_interval_graphs_tight}
    If $G$ is a circular interval graph, then $\chi_{\ell}(G)\leq \Delta_e(G)+2$. 
\end{lem}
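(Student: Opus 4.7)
The plan is to induct on $|V(G)|$ and reduce to \cref{lem: shift graph}. If $G$ is complete, then $\chi_\ell(G) = |V(G)| = \omega(G) = \Delta_e(G)+2$ since every pair of vertices in a maximum clique is an edge with codegree $\omega(G)-2$, so I may assume $G$ is not complete; set $k := \omega(G)$. Every interval of $G$ then has at most $k$ vertices, and $\Delta_e(G) \geq k-2$ since two vertices of a maximum clique are adjacent with $k-2$ common neighbors inside the clique.

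First I would establish a minimum-degree reduction: if some vertex $v$ satisfies $\deg(v) \leq \Delta_e(G) + 1$, then $G - v$ is a smaller circular interval graph with $\Delta_e(G-v) \leq \Delta_e(G)$, hence $(\Delta_e(G)+2)$-list-colorable by induction, and since $v$ has at most $\Delta_e(G)+1$ colored neighbors a color remains available in $v$'s list. So I may assume every vertex satisfies $\deg(v) \geq \Delta_e(G)+2 \geq k$, or equivalently $|I_v^L|+|I_v^R| \geq k+2$.

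Next I would argue that $G$ must satisfy the hypothesis of \cref{lem: shift graph} for this $k$, namely that every $k$ consecutive vertices on the underlying cycle of $G$ form a clique and hence lie in an interval of size exactly $k$. Once this is in hand, \cref{lem: shift graph} yields $\chi_\ell(G) \leq \Delta_e(G)+2$, completing the induction. To prove uniformity, suppose otherwise: there exist $k$ consecutive vertices $v_0, v_1, \dots, v_{k-1}$ not contained in any interval. Taking a minimal obstruction we may assume $v_0 \not\sim v_{k-1}$, so that $|I_{v_0}^R| \leq k-1$ and $|I_{v_{k-1}}^L| \leq k-1$; combined with the degree bound above and the global bound $|I_v^L|, |I_v^R| \leq k$, this pinpoints tight local constraints around the ``gap'' between $v_0$ and $v_{k-1}$ and forces $|I_{v_0}^L| = |I_{v_{k-1}}^R| = k$.

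The main obstacle is converting these constraints into a contradiction. The plan is to exploit the asymmetry at the gap to produce either a vertex whose deletion yields a strictly smaller counterexample, or to exhibit sufficient structure across the gap---such as a local canonical interval $2$-join between the dense part of $G$ (where the full interval structure is realized) and the sparse remainder---so that \cref{lem:canonical interval 2 join} provides the desired coloring. Carrying out the case analysis cleanly, without losing the single color of slack we aim to save over \cref{lem:circular_interval_graphs}, is the delicate step and explains why this tighter bound warrants the appended technical proof rather than a light adaptation of the greedy argument in \cref{lem:circular_interval_graphs}.
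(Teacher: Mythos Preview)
Your plan---delete a vertex of degree at most $\Delta_e(G)+1$ by induction, then show that what remains must satisfy the hypothesis of \cref{lem: shift graph}---is a genuine alternative to the paper's direct greedy argument, and the target structural claim is in fact true. But the sketch you give does not establish it. The local deduction you state is wrong: from $|I_{v_0}^R|\le k-1$, the minimum-degree bound $|I_{v_0}^L|+|I_{v_0}^R|\ge k+2$, and $|I_{v_0}^L|\le k$, one only gets $|I_{v_0}^L|\ge 3$, not $|I_{v_0}^L|=k$. And \cref{lem:canonical interval 2 join} will not rescue you: that lemma bounds $\chi$, not $\chi_\ell$, and its proof works by extending one fixed colouring of $G_1$ greedily over $G_2$, a manoeuvre that does not adapt to arbitrary list assignments.

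The way to close the gap is global rather than local at a single obstruction. For \emph{every} pair of consecutive vertices $u,v$ on the circle (with $u$ immediately left of $v$) one checks $\codeg(u,v)=|I_v^L|+|I_u^R|-4$; then $\deg(v)\ge\Delta_e(G)+2\ge\codeg(u,v)+2$ forces $|I_v^R|\ge|I_u^R|$, and symmetrically $\deg(u)\ge\codeg(u,v)+2$ forces $|I_u^L|\ge|I_v^L|$. Chasing these inequalities once around the cycle makes both $|I_v^L|$ and $|I_v^R|$ constant, hence equal to $k=\omega(G)$, which is exactly the shift-graph condition. (Some care is still needed when $n<2k$, where left and right intervals can meet on the far side of the circle and the codegree formula changes.) With this in hand your induction becomes a complete and short proof. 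By contrast, the paper avoids induction entirely: it fixes a starting vertex $v_1$ whose left interval is longest but whose predecessor's is not, colours greedily around the cycle, proves that at most one vertex can fail to receive a colour, and repairs that single failure by swapping the order of two consecutive vertices.
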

\begin{proof}
Let $C$ denote the circle and let $\mathcal I$ denote the set of intervals of $C$ corresponding to $G$. If for every vertex $v_i \in V(G)$, $v_i$ is the rightmost vertex of some longest interval in $\mathcal I$, then we conclude by Lemma \ref{lem: shift graph}. Otherwise, there are two consecutive vertices (along $C$), without loss of generality let us say $v_{n}, v_1 \in V(G)$, such that $v_1$ is the rightmost vertex of a longest interval in $I \in \mathcal{I}$ but $v_{n}$ is not the rightmost vertex of any longest interval in $I$. 
We order the vertices $v = v_1, v_2, \dots, v_n, v_1$ along the cycle $C$. We will use the notation $I_i^L:=I_{v_i}^L$ and $I_i^R:= I_{v_i}^R$ to improve readability. 

We consider the vertices of $G$ in the order starting at $v_1$ and moving along $C$ to the right. So we color first $v_1$, then $v_2$, then $v_3$ and so on. For every vertex $v_i$, we (try to) pick $\phi(v_i)$ to be the smallest color in their list that does not appear in their already colored neighborhood. 

For all vertices $v_i \not \in I_{1}^L$, the only neighbors of $v_i$ that are colored before $v_i$ are the vertices in $I_{i}^L$. Hence, $v_i$ has at most $I_i^L-1\leq \Delta_e(G)+1$ already colored neighbors, and so we can color all such $v_i$.

For a vertex $v_i \in I_1^L$ we denote by $A(v_i)$ the set of already colored neighbors of $v_i$ and by $R(v_i)$ the set of already colored neighbors of $v_i$ that lie to the right of $v_i$. In particular, $R(v_i)$ is of the form $\{v_1,\ldots,v_j\}$ for some $j$. The following claim follows from the fact that $|I^L_1|$ is assumed to be a maximum size interval.

\begin{claim}\label{precolor-divide}
For any $v_i \in I^L_1$
$|A(v_i)| = |R(v_i)| + |I_i^L|-1 \leq |R(v_i)| + |I_1^L|-1$.
\end{claim}

We bound $|A(v_i)|$ by examining $R(v_i)$ more closely.
Recall that by our choice of $v_1$, $|I_1^L| = \omega(G)$. 

\begin{claim}\label{precoloring-claim}
 For any $v_i \in I^L_1$,
 $|R(v_i)|\leq \Delta_e(G) +3- \omega(G)$.
\end{claim}

\begin{proofclaim}
Let $v_i \in I_1^L$. Then $v_1v_i \in E(G)$.
Note that if $R(v_i)$ is non-empty it must consist of the vertices corresponding to some interval starting at $v_1$ and traversing $C$ \emph{rightwards}.
So in particular all vertices in $R(v_i)$ besides $v_1$ are common neighbors of $v_1$ and $v_i$.
Note that since $v_1$ is the rightmost vertex of $I_1^L$, $R(v_i) \setminus \{v_1\}$ and $I_1^L \setminus \{v_1,v_i\}$ are disjoint.
So in particular $\codeg(v_1,v_i) \geq |I_1^L| + |R(v_i)| -3$.
So $|R(v_i)| \leq \codeg(v_1,v_i) - |I_1^L| +3$ and the first inequality of the claim follows. 
\end{proofclaim}

By combining \cref{precolor-divide,precoloring-claim} we obtain the following.

\begin{claim}\label{awclaim}
For every $v_i \in I_1^L$,
$|A(v_i)| \leq \Delta_e(G) +3-|I_1^L|+|I_i^L|-1 \leq \Delta_e(G)+2$.
If equality holds, then $|I_{i}^L| = \omega(G)$ and $|R(v_i)| =\Delta_e(G) +3-\omega(G)$, so that $R(v_i)=\{v_1,\ldots, v_{\Delta_e(G) +3-\omega(G)}\}$.
\end{claim}
%

We now prove that this procedure has at most one vertex that we cannot color greedily.

\begin{claim}\label{one-bad}
    In this procedure, there is at most one vertex $v_i$ that has at least $\Delta_e(G)+2$ neighbors that are colored before $v_i$. Moreover, $v_i$ has exactly $\Delta_e(G)+2$ neighbors that are colored before it.
\end{claim}

\begin{proofclaim}
    Suppose for a contradiction that $v_i$ and $v_j$ have this property where $i < j$.
    Then $v_i, v_j \in I_1^L$ and by \cref{awclaim}, $|I_1^L| = |I_i^L| = |I_j^L|=\omega(G)$, and $v_1,v_i, v_j$ are all pairwise adjacent and occur in this order when traversing rightwards along $C$ from $v_1$.
    Then, since $G$ is a circular interval graph, $R(v_i) \subseteq R(v_j)$.
    Since $v_i, v_j$ are both in $I_1^L$ and $|I_{i}^L| = |I_{j}^L| = \omega(G)$, it 
    follows that $v_{i-1}$ is adjacent to both $v_i$ and $v_j$.
    So in particular, the common neighbors of $v_i$ and $v_j$ include $I_1^L \setminus \{v_i, v_j\}$ and $R(v_j)$ and $\{v_{i-1}\}$.
    Since these sets are pairwise disjoint, we obtain that $\codeg(v_i, v_j) \geq  \Delta_e(G) +1$, a contradiction since $v_i, v_j$ are adjacent. 
\end{proofclaim}

We may assume that $v_i \in I_1^L$ is the unique uncolored vertex with $\Delta_e(G) + 2$ colored neighbors.
Let $v_{i-1}$ be the left neighbor of $v_i$. If $v_{i-1}$ has at most $\Delta_e(G)$ neighbors colored before it, we can switch the order of $v_{i-1}$ and $v_i$ so they both have at most $\Delta_e(G)+1$ forbidden colors when we color it. Moreover, since $v_{i-1}$ is the left neighbor of $v_i$, we have for every vertex $v_j \not\in \{v_{i-1},v_i\}$ that the set of already colored neighbors of $v_j$ is still the set $\{v_1,\ldots,v_{j-1}\}$.
So in particular, the arguments of the above claims still hold.
Hence, we can color all vertices as desired.

So assume that $v_{i-1}$ has $\Delta_e(G) +1$ neighbors colored before it.
Since $\codeg(v_{i-1},v_i) \leq \Delta_e(G)$, $v_{i-1}$ has at least one colored neighbor that is not a neighbor of $v_i$.
Hence, $|I_{i-1}^L| = |I_i^L| = |I_1^L|$ and $v_{i-1}$ is the rightmost vertex of $|I_{i-1}^L|$.
Moreover, $v_{i-1}$ has at least $\Delta_e(G)+1-(|I_{i-1}^L|-1)=\Delta_e(G)+2-|I_{1}^L|$ neighbors in $I^R_{i-1}$ that are colored before $v_{i-1}$. So $R(v_i) \supseteq \{v_1,\ldots, v_{\Delta_e(G)+2-|I_{1}^L|}\}$. Hence, $\codeg(v_{i-1},v_i) \geq (|I_i^L| -2) + (\Delta_e(G) +2-|I_1^L|) = \Delta_e(G)$. In particular, $v_{i-1}$ and $v_i$ have no other common neighbors.
This implies that there are no vertices in the interval $\{v_{i+1}, \ldots, v_n\}$, and so $v_i=v_n$. This contradicts the definition of $v_1$, as its left neighbor $v_n$ is the rightmost vertex of the interval $I_n^L$ of maximum length.
\end{proof}

\end{document}